\def\toP{{\scriptstyle\,\buildrel{\P}\over{\hbox to
0.6cm{\rightarrowfill}}\,}}
\newcommand{\R}{{\mathbb R}}
\newcommand{\BR}{{\mathbb R}}
\newcommand{\sgn}{\mbox{\rm sgn}}
\newcommand{\N}{{\mathbb N}}
\newcommand{\LL}{{\mathbb L}}
\newcommand{\HH}{{\cal H}}
\newcommand{\MM}{{\cal M}}
\newcommand{\VV}{{\cal V}}
\newcommand{\EE}{{\cal E}}
\newcommand{\Rd}{{{{\mathbb R}^d}}}
\newcommand{\FF}{{\cal F}}
\newcommand{\esssup}{\mathop{\mathrm{ess\,sup}}}
\newtheorem{theorem}{\bf Theorem}[subsection]
\newtheorem{proposition}[theorem]{\bf Proposition}
\newtheorem{lemma}[theorem]{\bf Lemma}
\theoremstyle{definition}
\newtheorem{definition}[theorem]{Definition}
\newtheorem{remark}[theorem]{\bf Remark}
\newcommand{\nsubsection}{\setcounter{equation}{0}\subsection}
\begin{document}
\title{Reflected backward stochastic differential equations with \\
two optional barriers}
\author{Tomasz Klimsiak, Maurycy Rzymowski and Leszek S\l omi\'nski}
\date{}
\maketitle

\begin{abstract}
We consider reflected backward stochastic differential equations with
two general optional barriers. The solutions to these equations
have the so-called regulated trajectories, i.e
trajectories with left and right finite limits. We prove the existence
and uniqueness of $\LL^p$ solutions, $p\geq 1$, and show that the solutions   may be
approximated by a modified penalization method.
\end{abstract}
{\em MSC 2000 subject classifications:} Primary 60H10; secondary
60G40.
\medskip\\
{\em Keywords:} Reflected backward stochastic differential
equation, optional barriers, processes with regulated trajectories, modified
penalization method.
\footnotetext{T. Klimsiak, M. Rzymowski and L. S\l omi\'nski: Faculty of
Mathematics and Computer Science, Nicolaus Copernicus University,
Chopina 12/18, 87-100 Toru\'n, Poland. E-mails:
tomas@mat.umk.pl, maurycyrzymowski@mat.umk.pl, leszeks@mat.umk.pl}

\nsubsection{Introduction}

In the present paper we study the existence, uniqueness and approximations of $\LL^p$, $p\ge1$, solutions of  reflected backward stochastic differential equations (RBSDEs) with monotone generator $f:\Omega\times[0,T]\times\BR\times \BR^d\to \BR$  and  two optional
barriers $L,U$  satisfying  the so-called generalized Mokobodzki condition.

The notion of RBSDE with one reflecting continuous barrier was introduced by El Karoui, Kupoudjian, Pardoux, Peng and Quenez \cite{EKPPQ}, who proved the existence and uniqueness of solutions of equations with  Lipschitz continuous  generator and  square-integrable data.
RBSDEs with two continuous barriers  were for the first time considered  by Cvitani\'c and Karatzas \cite{CvitanicKaratzas} under the same assumptions on the generator and the data.
In \cite{CvitanicKaratzas}, a solution is  a triple  $(Y,Z,R)$ of $\mathbb{F}$-progressively measurable processes such that $Y$ is  continuous and
\begin{equation}\label{eq01}
Y_t=\xi+\int_t^Tf(r,Y_r,Z_r)\,dr+\int_t^T\,dR_r-\int_t^T Z_r\,dB_r,\quad t\in [0,T],
\end{equation}
where  $B$ is a standard $d$-dimensional Brownian motion and
$\mathbb{F}$ is the standard augmentation of the natural filtration
generated by $B$. Moreover, it is required that
\begin{equation}\label{eq015}
L_t\le Y_t\le U_t,\quad t\in [0,T],
\end{equation}
and $R$  is a continuous process of finite variation such that $R_0=0$ and  the following minimality condition is satisfied:
\begin{equation}\label{eq02}\int_0^T(Y_r-L_r)\,dR^+_r+\int_0^T(U_r-Y_r)\,dR^-_r=0.
\end{equation}
Here $R^+, R^-$ stand for the positive and negative part of the Jordan decomposition of the measure $dR$.  In \cite{CvitanicKaratzas} the existence and uniqueness of a solution is proved. Note also that in \cite{CvitanicKaratzas,EKPPQ} important  connections between solutions of RBSDEs and suitably defined optimal stopping problems were established.

Since the pioneering works \cite{CvitanicKaratzas,EKPPQ} reflected BSDEs have been intensively studied by many authors. The results of \cite{CvitanicKaratzas,EKPPQ} were
generalized to equations with $\LL^p$ data and c\`adl\`ag barriers (see, e.g., \cite{bdh,Hamadene2,HH,HP,kl1,LMX,LX1,RS2}). The assumption that the barriers are c\`adl\`ag  implies that the first component $Y$ of a solutions is also  a c\`adl\`ag process.
Therefore this assumption is sometimes too strong when one think on applications of RBSDEs  to optimal stopping problems, because it is known that in general solutions of such problems have merely regulated trajectories (see \cite{EK}). It is worth noting here  that in \cite{kl,PX}
RBSDEs with non-c\`adl\`ag (progressively measurable) barriers and c\`adl\`ag solutions are considered. However, in the definition of a solutions adopted in  \cite{kl,PX} its first  component $Y$  need not satisfy (\ref{eq015}), but satisfies an essentially weaker condition
saying that $L_t\leq Y_t\leq U_t$ for a.e. $t\in [0,T]$.

To our knowledge, RBSDEs with barriers which are not c\`adl\`ag  and whose solution satisfies (\ref{eq015}) are treated only in the papers \cite{BO,GIOOQ,GIOQ,KRzS}. Among them, only  \cite{GIOQ} deals with equations with two barriers.
 In the present paper we generalize the  existence and
uniqueness results from \cite{GIOQ}  in several directions.  We consider the case of
$L^p$-data with $p\ge 1$ (in \cite{GIOQ} only the case of $p=2$
is considered). As for the generator, we assume that it is
Lipschitz continuous with respect to $z$ and only continuous and
monotone with respect to $y$ (in \cite{GIOQ} it is assumed that
$f$ is Lipschitz continuous with respect to $y$ and $z$). Moreover, we assume  that the generator and the barriers  satisfy the so-called generalized Mokobodzki condition which says that there exists a semimartingale
$X\in\MM_{loc}+\VV^p$ such that $L_t\le X_t\le U_t$,  $t\in[0,T]$, and
\begin{equation}
\label{eq.intr.m}
E\Big(\int_0^T|f(r,X_r,0)|\,dr\Big)^p+|X|_p<\infty,
\end{equation}
where $|X|_p:=(E\sup_{t\le T}|X_t|^p)^{1/p}$ for $p>1$ and $|X|_1:=\sup_{\tau\in\Gamma}E|X_\tau|$ (here $\MM_{loc}$
is the space of local martingales and $\VV^p$ is the space of finite variation processes with $p$-integrable variation, and $\Gamma$  denotes  the set of all
$\mathbb{F}$-stopping times). In \cite{GIOQ} the standard Mokobodzki condition is assumed. It says  that $L\le X\le U$ for some semimartingale $X\in\MM_{loc}+\VV^2$ such that $|X|_2<\infty$. This condition  automatically  implies (\ref{eq.intr.m}) with $p=2$ in  case $f$ is Lipschitz  continuous.

The assumptions on $\xi$ and $f$ adopted in the present paper are the same as in our previous paper \cite{KRzS} devoted to equations with one lower barrier, and our definition of a solution is a counterpart to the definition introduced in \cite{KRzS}. For a process $\eta$, let $\Delta^+\eta_t=\eta_{t+}-\eta_t$, $\Delta^-\eta_t=\eta_t-\eta_{t-}$, i.e. $\Delta^+\eta_t$, $\Delta^-\eta_t$ denote the right and left jump of $\eta$ at $t$. Our definition says that a triple $(Y,Z,R)$ of $\mathbb F$-progressively measurable processes is a  solution of RBSDE on the interval $[0,T]$ with terminal time $\xi$, right-hand side $f$
and optional barriers $L, U$ (RBSDE$(\xi,f,L,U)$ for short) if $Y,R$ are regulated processes,  $R$ is a finite variation process with $R_0=0$, (\ref{eq01}) and  (\ref{eq015})  hold true, and the following minimality condition  is satisfied:
\begin{align}
\label{eq.intr.min}
\nonumber\int_0^T (Y_{r-}&-\limsup_{s\uparrow r}L_{s})\,dR^{+,*}_r +\sum_{r<T}(Y_r-L_r)\Delta^+R^+_r\\
&+\int_0^T (\liminf_{s\uparrow r}U_{s}-Y_{r-})\,dR^{-,*}_r+\sum_{r<T}(U_r-Y_r)\Delta^+R^-_r=0,
\end{align}
where $R^{+,*}, R^{-,*}$ are  c\`adl\`ag parts of the processes $R^+, R^-$.  We show that if the barriers $L,U$ are regulated, then
$\Delta^-R^+_t=(Y_t-L_{t-})^-$, $\Delta^-R^-_t=(Y_t-U_{t-})^+$,
and
$\Delta^+R^+_t=(Y_{t+}-L_{t})^-$, $\Delta^+R^-_t=(Y_{t+}-U_{t})^+$.
If the barriers are c\`adl\`ag (resp. c\`agl\`ad)
then $\Delta^+R\equiv 0$ (resp. $\Delta^-R=0$). Consequently, if $L,U$ are continuous, then  condition (\ref{eq.intr.min})
reduces to (\ref{eq02}). Moreover, if the barriers are c\`adl\`ag, then condition (\ref{eq.intr.min}) reduces to the minimality
condition considered in \cite{Hamadene2}.
In  the present paper,  we generalize the existence, uniqueness and approximation results proved in \cite{KRzS}. It is worth pointing out, however, that the proofs  are essentially more complicated and in many points different from those in \cite{KRzS}.

Our main results are proved in Sections 3 and 4.
In Section \ref{sec3}, we consider equations with general two optional barriers (they need not be regulated). We show  that there exists a unique solution $(Y,Z,R)$ to RBSDE$(\xi,f,L,U)$ such that $|Y|_p+E|R^+_T|^p+E|R^-_T|^p<\infty$ and
$E(\int_0^T|Z_r|^2\,dr)^{p/2}<\infty$ in case $p>1$, and $E(\int_0^T|Z_r|^2\,dr)^{q/2}<\infty$ for $q\in (0,1)$ in case  $p=1$. In case $p=1$,   we  assume additionally that $f$ satisfies condition  (Z) introduced in the paper \cite{bdh} devoted to usual (nonreflected) BSDEs. The proof of the existence part is divided into two steps. In the first step, we assume that $f$ does not depend on $z$ and we solve the following decoupling system
\[
\left\{
\begin{array}{ll} Y^1_t=\esssup_{t\le\tau\le T}E(Y^2_\tau+\int_t^\tau f(r,Y^1_r-Y^2_r)\,dr+L_\tau\mathbf{1}_{\tau<T}+\xi\mathbf{1}_{\tau=T}|\FF_t),
\smallskip\\
 Y^2_t=\esssup_{t\le\tau\le T}E(Y^1_\tau\mathbf{1}_{\tau<T}-U_\tau\mathbf{1}_{\tau<T}|\FF_t).
\end{array}
\right.
\]
This system  may be equivalently formulated as a system of RBSDEs with lower optional barriers (see \cite{BO,GIOOQ,KRzS}).
Putting  $Y=Y^1-Y^2$, we obtain  a solution of RBSDE$(\xi,f,L,U)$. Note that in the linear case, i.e. when $f$ does not depend on $y$ as well, this method was considered in the context of Dynkin games problem by Bismut \cite{Bismut1,Bismut2}
(see also \cite{KQC,N}). Next, to solve the nonlinear problem,  we apply a fixed point argument in case $p>1$, and Picard iteration procedure in case $p=1$.

In Section \ref{sec4}, under the additional assumption that the barriers $L,U$ are regulated, we propose another
approach to the existence problem. We consider two penalization schemes based on BSDEs
with penalty term and RBSDEs with one barrier and penalty term. In the first one, we show that there exists a unique solution $(Y^n,Z^n)$ of generalized BSDE of the form
\begin{align}\label{r.4.intr}
\nonumber
Y^n_t&=\xi+\int^T_t f(r,Y^n_r,Z^n_r)\,dr-\int^T_t Z^n_r\,dB_r+n\int^T_t(Y^n_r-L_r)^-\,dr\\
&\quad+\sum_{t\le\sigma_{n,i}<T}(Y^n_{\sigma_{n,i}+}-L_{\sigma_{n,i}})^-
-n\int^T_t(Y^n_r-U_r)^+\,dr-\sum_{t\le\tau_{n,i}<T}(Y^n_{\tau_{n,i}+}-U_{\tau_{n,i}})^+,
\end{align}
where $\{\{\sigma_{n,i}\}\}$ (resp. $\{\{\tau_{n,i}\}\}$) is a suitably defined array of stopping times exhausting  the right jumps of $L$ (resp. $U$). We prove that
\begin{equation}
\label{intr.1a}
Y^n_t\rightarrow Y_t,\,\quad  t\in [0,T].
\end{equation}
Moreover, for every $\gamma\in (0,2)$,
\begin{equation}
\label{intr.1b}
E(\int_0^T|Z^n_r-Z_r|^\gamma\,dr )^{p/2}\rightarrow 0
\end{equation}
if $p>1$, and
\begin{equation}
\label{intr.1c}
E(\int_0^T|Z^n_r-Z_r|^\gamma\,dr )^{q/2}\rightarrow 0,\quad  q\in (0,1),
\end{equation}
if $p=1$. We also prove that if $\Delta^-R=0$, then
$|Y^n-Y|_p\rightarrow 0,$
and  (\ref{intr.1b}) holds true with $\gamma=2$. To prove (\ref{intr.1a})--(\ref{intr.1c})
we first show the convergence of penalization schemes based on RBSDEs. In this scheme, $(\bar Y^n,\bar Z^n,\bar K^n)$ (resp. $(\underline Y^n,\underline Z^n,\underline A^n)$) is a solution to reflected BSDE with upper barrier $U$ (resp. lower barrier $L$) and the generator being a sum of $f$ and an additional penalty term (depending on $n$) involving $L$ (resp. $U$) and the right-side jumps of $L$ (resp. $U$). We prove that
$(\bar Y^n,\bar Z^n,\bar K^n)$,  $(\underline Y^n,\underline Z^n,\underline A^n)$ converge to $(Y,Z,R)$ in the sense of (\ref{intr.1a})--(\ref{intr.1c}) and
\[
\bar Y^n_t\le Y^n_t\le \underline Y^n_t,\quad t\in [0,T].
\]
The advantage  of these approximations is that $\{\bar Y^n\}$ is nondecreasing and $\{\underline Y^n\}$ is nonincreasing.

\nsubsection{Preliminaries}

Let $B$ be a standard Wiener process defined on some probability space $(\Omega,\mathcal{F},P)$ and let $\mathbb{F}=\{\mathcal{F}_t,\,t\in[0,T]\}$ be the standard augmentation of the filtration generated by $B$. Recall that a function $y:[0,T]\to\Rd$ is called regulated if for every $t\in[0,T)$  the limit $y_{t+}=\lim_{u\downarrow t}y_u$ exists, and for every $s\in(0,T]$ the limit $y_{s-}=\lim_{u\uparrow s}y_u$ exists. For any regulated function $y$ on $[0,T]$, we set $\Delta^{+}y_t=y_{t+}-y_t$ if $0\leq t<T$, and  $\Delta^{-}y_s=y_s-y_{s-}$ if  $0<s\leq T$. We also set $\Delta^{+}y_T$ $=\Delta^{-}y_0$ $=0$, $\Delta y_t=\Delta^+y_t+\Delta^-y_t$, $t\in[0,T]$  and $y^{\oplus}_t=y_{t+}$ if $t<T$, and $y^{\oplus}_T=y_T$. Note that $y^{\oplus}$ is a c\`adl\`ag function such that $y^{\oplus}_t=\Delta^+y_t+y_t$, $t\in[0,T]$. It is known that each regulated function is bounded and has at most countably many discontinuities (see, e.g., \cite[Chapter 2, Corollary 2.2]{dn}).

For $x\in\R^d$, $z\in\R^{d\times n}$, we set
$|x|^2=\sum^d_{i=1}|x_i|^2$, $\|z\|^2=\mbox{trace}(z^*z)$.
$\langle\cdot,\cdot\rangle$ denotes the usual scalar product in
$\R^d$  and $\sgn(x)={\bf 1}_{\{x\neq0\}}{x}/{|x|}$.

For a fixed stopping time $\tau$, we denote by $\Gamma_\tau$  the set of all
$\mathbb{F}$-stopping times taking values in $[\tau,T]$. We put $\Gamma:=\Gamma_0$.
We denote by $\mathbb{L}^p$, $p>0$,  the space of random variables $X$ such
that $\|X\|_p\equiv E(|X|^p)^{1\wedge1/p}<\infty$. We denote by $\mathcal{S}$
the set of all $\mathbb{F}$-adapted regulated processes, and by $\mathcal{S}^p$,
$p>0$, the subset of  $Y\in\mathcal{S}$ such that $E\sup_{0\le t\le T}|Y_t|^p<\infty$.
Given a  regulated $\mathbb F$-adapted process $X$,  we set,
\[
|X|_p=\left\{
\begin{array}{ll} (E\sup_{t\le T}|X_t|^p)^{1\wedge (1/p)},&
\mbox{for}\,\, p\neq1,
\smallskip\\
\sup_{\tau\in\Gamma}E|X_\tau|,& \mbox{for}\,\, p=1.
\end{array}
\right.
\]
$\mathcal{H}$ is the set of $\mathbb{F}$-progressively measurable processes
$X$ such that $
P\big(\int^T_0|X_t|^2\,dt<\infty\big)=1,
$
and $\mathcal{H}^p$, $p>0$, is the set of all $X\in\mathcal{H}$
such that $\|X\|_{\mathcal
{H}^p}\equiv\|(\int_0^T|X_s|^2\,ds)^{1/2}\|_p<+\infty$.

We say that an $\mathbb{F}$-progressively measurable process $X$
is of class (D) if the family $\{X_{\tau},\,\tau\in\Gamma\}$ is
uniformly integrable. We equip the
space of processes of class (D) with the norm
$|\cdot|_1$.

For $\tau\in\Gamma$, we denote by $[[\tau]]$ the set
$\{(\omega,t):\,\tau(\omega)=t\}$. An increasing sequence
$\{\tau_k\}\subset\Gamma$ is called a chain if
$
\forall{\omega\in\Omega}$ $
\exists{n\in\mathbb{N}}$  $\forall{k\ge n}$ $\tau_k(\omega)=T$.

$\mathcal{M}$ (resp. $\mathcal{M}_{loc}$) is the set of all
$\mathbb{F}$-martingales (resp. local martingales). $\MM^p$, $p\ge1$, denotes the space of all $M\in \MM$
such that
$
E([M]_T)^{p/2}<\infty,
$
where $[M]$ stands for  the quadratic variation of $M$.

$\mathcal{V}$ (resp. $\mathcal{V}^+$) denotes the space of
$\mathbb{F}$-progressively measurable process of finite variation
(resp. increasing) such that $V_0=0$,  and $\mathcal{V}^p$ (resp.
$\mathcal{V}^{+,p}$), $p\ge1$,  is the set of processes
$V\in\mathcal{V}$ (resp. $V\in\mathcal{V}^+$) such that
$E|V|^p_T<\infty$, where $|V|_T$ denotes the total variation of
$V$ on $[0,T]$.   For $V\in \mathcal{V}$, we denote by  $V^*$ the
c\`adl\`ag part of the process $V$, and by  $V^d$ its purely
jumping part consisting of right jumps, i.e.
\[
V^d_t=\sum_{s<t}\Delta^+V_s,\quad V^*_t=V_t-V^d_t,\quad t\in
[0,T].
\]

Let $V^1,V^2\in \mathcal{V}$. We write $dV^1\le dV^2$ if
$dV^{1,*}\le dV^{2,*}$ and $\Delta^+V^1\le\Delta^+V^2$ on $[0,T]$.

In the whole paper all relations between random variables hold
$P$-a.s. For process $X$ and $Y$, we  write $X\le Y$ if $X_t\le Y_t$,
$t\in[0,T]$.

We assume that $V\in \VV$, the barriers $L,U$  are $\mathbb{F}$-adapted optional processes, $L_T\le\xi\le U_T$, and the generator is a map \[
\Omega\times[0,T]\times\BR\times \BR^d\ni (\omega,t,y,z)\mapsto f(\omega,t,y,z)\in \BR,
\]
which is $\mathbb F$-adapted  for fixed $y,z$. 
 We will need the following assumptions.
\begin{enumerate}
\item[(H1)]There is $\lambda\ge0$ such that
$|f(t,y,z)-f(t,y,z')|\le\lambda|z-z'|$ for $t\in[0,T]$, $y\in\R$,
$z,z'\in\Rd$.
\item[(H2)]There is $\mu\in\R$ such that
$(y-y')(f(t,y,z)-f(t,y',z))\leq\mu(y-y')^2$ for $t\in[0,T]$,
$y,y'\in\R$, $z\in\Rd$.
\item[(H3)] $\xi,\, \int_0^T|f(r,0,0)|\,dr,\,|V|_T\in\mathbb{L}^p$.
\item[(H4)] For every $(t,z)\in[0,T]\times\mathbb{R}^d$ the mapping $\mathbb{R}\ni y\rightarrow f(t,y,z)$ is continuous.
\item[(H5)] $[0,T]\ni t\mapsto f(t,y,0)\in L^1(0,T)$ for every $y\in\mathbb{R}$,
\item[(H6)] There exists a process $X\in \mathcal{M}_{loc}+\mathcal{V}^p$ such that $X\in\mathcal{S}^p$, $L\le X\le U$ and $\int_0^T |f(r,X_r,0)|\,dr\in \mathbb{L}^p$.
\item[(H6*)] There exists a process $X\in \mathcal{M}_{loc}+\mathcal{V}^1$ such that $X$ is of class (D),
$L\le X\le U$ and $\int_0^T |f(r,X_r,0)|\,dr\in \mathbb{L}^1$,
\item[(Z)] There exists a  progressively measurable process $g$ and $\gamma\ge 0,\, \alpha\in [0,1)$
such that
\[
|f(t,y,z)-f(t,y,0)|\le\gamma(g_t+|y|+|z|)^\alpha,\quad t\in [0,T], \,y\in\BR,\, z\in\BR^d.
\]
\end{enumerate}

\begin{remark}
If $X\in \mathcal S$ and $X$ is of class (D), then $X\in\mathcal{S}^q$ for $q\in(0,1)$.
To see this, we let $\sigma_a=\inf\{t\ge 0: |X_t|>a\}\wedge T$. Then for $q\in (0,1)$ and $b>0$,
\begin{align*}
E\sup_{t\le T}|X_t|^q=E\sup_{t\le T}|X^\oplus_t|^q&\le b +\int_b^\infty P(\sup_{t\le T}|X^\oplus_t|^q>a)\,da\\&
\le
b+\int_b^\infty \frac{E|X^\oplus_{\sigma_{a^{1/q}}}|}{a^{1/q}}\,da\le b+|X|_1\int_b^\infty\frac{1}{a^{1/q}}\,da.
\end{align*}
Taking infimum over $b>0$, we get
\[
E\sup_{t\le T}|X_t|^q\le\frac{1}{1-q}|X|_1^q.
\]
\end{remark}

\begin{definition}
We say that a pair $(Y,Z)$ of $\mathbb{F}$-progressively
measurable processes  is a solution of BSDE with right-hand side
$f+dV$ and terminal value $\xi$ (BSDE($\xi$,$f+dV$) in
abbreviation) if
\begin{enumerate}[{\rm(a)}]
\item $Y$ is a regulated process and $Z\in  \mathcal{H}$,
\item$\int^T_0|f(r,Y_r,Z_r)|\,dr<\infty$,
\item$Y_t=\xi+\int^T_t f(r,Y_r,Z_r)\,dr+\int_t^T\,dV_r
-\int^T_t Z_r\,dB_r$, $t\in[0,T]$.
\end{enumerate}
\end{definition}

The following  definition of a solution of reflected BSDE with one optional barrier was introduced in \cite{KRzS}.

\begin{definition}\label{def1.3}
We say that  a triple $(Y, Z,K)$   of $\mathbb{F}$-progressively
measurable  processes is a solution of the reflected backward
stochastic differential equation with right-hand side $f+dV$,
terminal value $\xi$ and lower barrier $L$
(\underline{R}BSDE($\xi,f+dV,L$) in abbreviation) if
\begin{enumerate}[(a)]
\item $Y$ is a regulated process and $Z\in  \mathcal{H}$,
\item $K\in \mathcal{V}^+$,  $L_t\le Y_t$, $t\in[0,T]$, and
\[
\int^T_0(Y_{r-}-\limsup_{s\uparrow r}L_s)\,dK^*_r
+\sum_{r<T}(Y_r-L_r)\Delta^+K_r=0,
\]
\item $\int^T_0|f(r,Y_r,Z_r)|\,dr<\infty$,
\item $Y_t=\xi+\int^T_t f(r,Y_r,Z_r)\,dr
+\int_t^T\,dK_r+\int_t^T\,dV_r-\int^T_t Z_r\,dB_r,\quad t\in
[0,T]$.
\end{enumerate}
\end{definition}

\begin{definition}\label{r.2}
We say that  a triple $(Y, Z,K)$   of $\mathbb{F}$-progressively
measurable  processes is a solution of the reflected backward
stochastic differential equation with right-hand side $f+dV$,
terminal value $\xi$ and upper barrier $U$
($\mathrm{\overline{R}}$BSDE($\xi,f+dV,U$) in abbreviation) if
$(-Y,-Z,K)$ is a solution of
\underline{R}BSDE($-\xi,-\tilde{f}-dV,-U)$ with
\[
\tilde{f}(t,y,z)=f(t,-y,-z).
\]
\end{definition}

The following  theorem and lemma, which are easy modifications  of
\cite[Theorem 2.10]{KRzS} and \cite[Lemma 2.8]{KRzS},
respectively,  will be used  in Section \ref{sec4}. We omit their
proofs because are the same as the proofs of the corresponding
results from \cite{KRzS}.

\begin{theorem}\label{tw3.2}
Assume that  \textnormal{(H1), (H2), (H4), (H5)} are satisfied,
$(Y^n,Z^n)\in\mathcal{S}\otimes\mathcal{H}$, $D^n\in \mathcal{V},
K^n\in\mathcal{V}^+$, $t\mapsto f(t,Y^n_t,Z^n_t)\in L^1(0,T)$ and
\[
Y^n_t=Y^n_0-\int^t_0 f(s,Y^n_s,Z^n_s)\,ds
-\int^t_0\,dK^n_s+\int^t_0\,dD^n_s+\int^t_0 Z^n_s\,dB_s
\]
for $t\in[0,T]$. Moreover, assume that
\begin{enumerate}[{\rm (a)}]
\item $dD^n\le dD^{n+1}$, $n\in\mathbb{N}$,
$\sup_{n\ge 0}E|D^n|_T<\infty$,

\item $\liminf_{n\rightarrow\infty}
\Big(\int^{\tau}_{\sigma}(Y_s-Y^n_s)\,d(K^n_s-D^n_s)^*
+\sum_{\sigma\le s<\tau}(Y_s-Y^n_s)\Delta^+(K^n_s-D^n_s)\Big)\ge
0$ for any $\sigma,\tau\in\Gamma$ such that $\sigma\le\tau$,

\item there exists a process $C\in\mathcal{V}^{+,1}$ such that
$\Delta^-K^n_t\le\Delta^-C_t$, $t\in[0,T]$,

\item there exist processes $\underline{y},\overline{y}
\in\mathcal{V}^{+,1}+\mathcal{M}_{loc}$ of class \mbox{\rm (D)}
such that
\[
E\int^T_0 f^+(s,\overline{y}_s,0)\,ds+E\int^T_0
f^-(s,\underline{y}_s,0)\,ds<\infty,\quad\overline{y}_t
\le Y^n_t\le\underline{y}_t,\quad t\in[0,T],
\]
\item $E\int^T_0|f(s,0,0)|\,ds<\infty$,
\item $Y^n_t\nearrow Y_t$, $t\in[0,T]$.
\end{enumerate}
Then $Y$ is regulated, $D\in\mathcal{V}^{1}$,  where
$D_t=\lim_{n\rightarrow\infty}D^n_t$, $t\in[0,T]$, and there exist
$K\in\mathcal{V}^+$, $Z\in\mathcal{H}$ such that
\[
Y_t=Y_0-\int^t_0 f(s,Y_s,Z_s)\,ds
-\int^t_0\,dK_s+\int^t_0\,dD_s+\int^t_0 Z_s\,dB_s\quad t\in[0,T],
\]
and
\[
Z^n\rightarrow Z\quad dt\otimes P\mbox{-a.e.}, \quad\int^{T}_0
|f(s,Y^n_s,Z^n_s)-f(s,Y_s,Z_s)|\,ds\rightarrow 0\quad
in\,\,\mbox{probability}\quad P.
\]
Moreover, there exists a chain $\{\tau_k\}\subset\Gamma$ such that for every $p\in(0,2)$,
\begin{equation}
\label{eq2.2} E\int^{\tau_k}_0|Z^n_s-Z_s|^p\,ds\rightarrow 0.
\end{equation}
If $|\Delta^-K_t|=0$, $t\in[0,T]$, then
\mbox{\rm(\ref{eq2.2})} also holds for $p=2$.
\end{theorem}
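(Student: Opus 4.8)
\textit{Proof plan.} My proposal is to run the monotone (Peng‑type) limit scheme for reflected equations in the setting of regulated processes, following verbatim the proof of \cite[Theorem 2.10]{KRzS}. By the standard change of variables replacing $Y^n$ by $e^{\mu\,\cdot}Y^n$ (which affects only the generator and the finite variation terms by harmless exponential factors) one may assume $\mu\le 0$, so that by (H2) the map $y\mapsto f(t,y,0)$ is nonincreasing. I would first identify the limiting finite variation term: since $dD^n\le dD^{n+1}$, for each $t$ the sequences $D^{n,*}_t$ and $D^{n,d}_t=\sum_{s<t}\Delta^+D^n_s$ are nondecreasing in $n$, hence $D_t:=\lim_n D^n_t$ exists, $D-D^n\in\VV^+$ for every $n$, and $D$ is regulated with $D^*=\lim_n D^{n,*}$, $\Delta^+D=\lim_n\Delta^+D^n$. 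Fatou's lemma and (a) give $E|D|_T\le\sup_nE|D^n|_T<\infty$, so $D\in\VV^1$.

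\textit{A priori estimates.} Using that $\overline y,\underline y$ are regulated and localizing their local martingale parts, I would fix a chain $\{\tau_k\}\subset\Gamma$ on which $\overline y,\underline y$ — hence, by (d), all the $Y^n$ — are bounded. Since $y\mapsto f(s,y,0)$ is nonincreasing and $\overline y\le Y^n\le\underline y$, hypothesis (d) yields the uniform bound $E\int_0^T|f(s,Y^n_s,0)|\,ds\le E\int_0^Tf^+(s,\overline y_s,0)\,ds+E\int_0^Tf^-(s,\underline y_s,0)\,ds<\infty$. Applying It\^o's formula for regulated processes to $|Y^n|^2$ on $[0,\tau_k]$, using (H1) and the Burkholder--Davis--Gundy inequality, bounding $\int_0^{\tau_k}|Y^n_s|\,dK^n_s\le(\sup_{t\le\tau_k}|Y^n_t|)K^n_{\tau_k}$ because $K^n$ is nondecreasing, and reading $K^n_{\tau_k}$ off the equation, one is led to an inequality $x\le a+b\sqrt x$ with $a,b$ independent of $n$, giving
\[
\sup_{n}\Big(E\sup_{t\le\tau_k}|Y^n_t|^2+E\int_0^{\tau_k}\|Z^n_s\|^2\,ds+EK^n_{\tau_k}\Big)<\infty\qquad\text{for every }k.
\]

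\textit{Passage to the limit.} From the $\mathcal H^2$‑boundedness on each $[0,\tau_k]$ I would extract, diagonally in $k$, a subsequence along which $Z^n$ converges weakly, in every such space, to a process $Z$; since $V\mapsto\int_0^tV\,dB$ is weakly continuous and (see below) the generator term converges, $K^n_t$ converges weakly in $L^2(\Omega)$ to $K_t:=Y_0-Y_t-\int_0^tf(s,Y_s,Z_s)\,ds+D_t+\int_0^tZ_s\,dB_s$. Weak limits preserve $0=K^n_0\le K^n_s\le K^n_t$ for $s\le t$, so $K$ has a version in $\VV^+$ with $EK_{\tau_k}\le\liminf_nEK^n_{\tau_k}<\infty$, and uniqueness of the limit promotes all convergences to the whole sequence. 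Since $Y_t=Y_0-\int_0^tf(s,Y_s,Z_s)\,ds+D_t-K_t+\int_0^tZ_s\,dB_s$ with every term on the right regulated, $Y$ is regulated. Once $Z^n\to Z$ in $dt\otimes P$‑measure is available, (H1), (H4), the domination $\overline y\le Y^n\le\underline y$, (d), (e) and the monotonicity of $y\mapsto f(\cdot,y,0)$ give $f(s,Y^n_s,Z^n_s)\to f(s,Y_s,Z_s)$ $dt\otimes P$‑a.e.\ and, by dominated convergence, $\int_0^T|f(s,Y^n_s,Z^n_s)-f(s,Y_s,Z_s)|\,ds\to 0$ in probability, which justifies the limiting equation.

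\textit{Strong convergence of $Z^n$, and the main obstacle.} It remains to prove $E\int_0^{\tau_k}\|Z^n_s-Z_s\|^p\,ds\to 0$ for $p\in(0,2)$. I would apply It\^o's formula for regulated processes to $|Y^n-Y^m|^2$ on $[0,\tau_k]$: the generator increment is split as $f(s,Y^n,Z^n)-f(s,Y^m,Z^m)=\big(f(s,Y^n,Z^n)-f(s,Y^m,Z^n)\big)+\big(f(s,Y^m,Z^n)-f(s,Y^m,Z^m)\big)$ and estimated by (H2) and (H1) with Young's inequality, absorbing a small multiple of $\|Z^n-Z^m\|^2$; the term $-2E\int_0^{\tau_k}(Y^n_s-Y^m_s)\,d(D^n_s-D^m_s)$ is nonpositive for $n\ge m$ since $Y^n\ge Y^m$ and $D^n-D^m\in\VV^+$; and $E|Y^n_{\tau_k}-Y^m_{\tau_k}|^2$, $E\int_0^{\tau_k}|Y^n_s-Y^m_s|^2\,ds$ tend to $0$ by dominated convergence. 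The delicate term is the reflection cross term $2E\int_0^{\tau_k}(Y^n_s-Y^m_s)\,d(K^n_s-K^m_s)$: because $Y^n\to Y$ only pointwise, and in general \emph{not} uniformly, it cannot be handled by $\sup_t|Y^n_t-Y^m_t|(K^n_{\tau_k}+K^m_{\tau_k})$; instead one writes $Y^n-Y^m=-(Y-Y^n)+(Y-Y^m)$ and disposes of the resulting integrals against $dK^n$ and $dK^m$ via the generalized minimality condition (b) — applied with $\sigma=0$, $\tau=\tau_k$, $A^n=K^n-D^n$, and the behaviour of $D^n$ from the first step — together with monotone convergence, while the left‑jump contributions $\sum(Y^n_s-Y^m_s)\,\Delta^-(K^n-K^m)_s$ are dominated by (c), i.e.\ by $\Delta^-C$. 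This shows $\{Z^n\}$ is Cauchy in $\mathcal H^2(\tau_k)$ up to a remainder carried by the left jumps of the $K^n$: when $\Delta^-K=0$ this remainder vanishes in the limit and one gets $p=2$; in general one still obtains convergence in $dt\otimes P$‑measure, and combining it with the uniform $\mathcal H^2(\tau_k)$‑bound (hence uniform integrability of $\|Z^n\|^p$ for $p<2$) yields $E\int_0^{\tau_k}\|Z^n_s-Z_s\|^p\,ds\to 0$ for every $p\in(0,2)$, and in particular $Z^n\to Z$ $dt\otimes P$‑a.e. The hardest point is exactly this: reconciling the generalized minimality condition (b) with the two‑sided‑jump It\^o calculus, the left jumps of $K^n$ — controlled only by (c) — being the genuine obstruction to strong $\mathcal H^2$‑convergence and the reason for the restriction $p<2$ when $\Delta^-K\ne0$.
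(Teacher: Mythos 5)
The paper gives no proof of this theorem at all: it is stated as an easy modification of \cite[Theorem 2.10]{KRzS}, whose proof is said to carry over unchanged, and your plan is precisely a reconstruction of that monotone-limit argument (identification of $D$, a priori $L^2$ bounds on a chain, weak limits to produce $K$ and $Z$, then the It\^o/Cauchy estimate for $Z^n$ with the reflection cross term handled via the generalized minimality condition and the left jumps of $K^n$ controlled by (c)), so your approach coincides with the paper's. The only point to flag is that the cross term $\int(Y^n-Y^m)\,d(K^n-K^m)$ is left at the level of a plan: the decomposition $Y^n-Y^m=-(Y-Y^n)+(Y-Y^m)$ disposes of three of the four resulting integrals by (b), positivity and dominated convergence, but the remaining one, $\int(Y-Y^m)\,dK^n$, has the wrong sign and requires the weak convergence of the measures $dK^n$ (as in \cite{KRzS}) rather than condition (b) alone.
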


\begin{lemma}\label{r.5}
Assume that \textnormal{(H1)--(H4)} are satisfied. Let
$D^n,D\in\mathcal{V}$ and  $(Y^n,Z^n)$,
$(Y,Z)\in\mathcal{S}\otimes\mathcal{H}$ be such that $t\mapsto
f(t,Y^n_t,Z^n_t),t\mapsto f(t,Y_t,Z_t)\in L^1(0,T)$ and
\[
Y^n_t=Y^n_0-\int^t_0 f(s,Y^n_s,Z^n_s)\,ds
-\int^t_0\,dD^n_s+\int^t_0 Z^n_s\,dB_s,\quad t\in[0,T],
\]
\[
Y_t=Y_0-\int^t_0 f(s,Y_s,Z_s)\,ds
-\int^t_0\,dD_s+\int^t_0 Z_s\,dB_s,\quad t\in[0,T].
\]
If
\begin{enumerate}[{\rm(a)}]
\item there exists a chain $\{\tau_k\}$, such that
$\sup_{n\ge 0}E((D^n)^+_{\tau_k})^2<\infty$,
\item $\liminf_{n\rightarrow\infty}
(\int^{\tau}_{\sigma}(Y_s-Y^n_s)\,dD^{n,*}_s+\sum_{\sigma\le
s<\tau}(Y_s-Y^n_s)\Delta^+D^n_s)\ge 0$ for all
$\sigma,\tau\in\Gamma$ such that $\sigma\le\tau$,
\item there exists $C\in\mathcal{V}^{+,1}$ such that
$|\Delta^-(Y_t-Y^n_t)|\le|\Delta^-C_t|$, $t\in[0,T]$,
\item there exist processes $\underline{y},
\overline{y}\in\mathcal{V}^{+,1}+\mathcal{M}_{loc}$ of class
\mbox{\rm(D)} such that
\[
\overline{y}_t\le Y^n_t\le\underline{y}_t,\quad t\in[0,T], \quad
E\int^T_0 f^+(s,\overline{y}_s,0)\,ds+E\int^T_0
f^-(s,\underline{y}_s,0)\,ds<\infty,
\]
\item $Y^n_t\rightarrow Y_t$, $t\in[0,T]$,
\end{enumerate}
then
\[
Z^n\rightarrow Z\quad dt\otimes P\mbox{-a.e.}, \quad\int^{T}_0
|f(s,Y^n_s,Z^n_s)-f(s,Y_s,Z_s)|\,ds\rightarrow0\quad
in\,\,\mbox{probability}\quad P
\]
and there exists a chain $\{\tau_k\}\subset\Gamma$ such that
for all  $k\in\mathbb{N}$ and $p\in(0,2)$,
\begin{equation}
\label{eq2.1} E\int^{\tau_k}_0|Z^n_s-Z_s|^p\,ds\rightarrow0.
\end{equation}
If $\Delta^-C_t=0$, $t\in[0,T]$, then \mbox{\rm(\ref{eq2.1})} also
holds for $p=2$.
\end{lemma}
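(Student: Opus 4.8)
The plan is to mimic the proof of \cite[Lemma 2.8]{KRzS}, which the authors invoke here as an easy modification. Set $\delta Y=Y^n-Y$, $\delta Z=Z^n-Z$, $\delta D=D^n-D$ and $\delta f_s=f(s,Y^n_s,Z^n_s)-f(s,Y_s,Z_s)$ (the index $n$ is suppressed). Subtracting the two equations,
\[
\delta Y_t=\delta Y_0-\int_0^t\delta f_s\,ds-\int_0^t d\delta D_s+\int_0^t\delta Z_s\,dB_s,\qquad t\in[0,T].
\]
Since by (d) the process $Y^n$ is squeezed between the semimartingales $\overline y,\underline y$, one would first fix a chain $\{\tau_k\}$ refining the one in (a) so that on each $[0,\tau_k]$ the processes $\overline y,\underline y,C$ and $\int_0^{\cdot}\|Z_s\|^2\,ds$ are bounded by a constant, $\sup_n\sup_{t\le\tau_k}|Y^n_t|<\infty$, and — using (a), the equation and the uniform boundedness of $Y^n$ — the moment bounds on $D^n$ and on $\int_0^{\tau_k}\|Z^n_s\|^2\,ds$ needed below hold uniformly in $n$. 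It then suffices to prove that $\int_0^{\tau_k}\|\delta Z_s\|^2\,ds\to0$ in probability for every $k$: as $\{\tau_k\}$ is a chain this gives $\delta Z\to0$ in $dt\otimes P$-measure, hence $dt\otimes P$-a.e.\ along a subsequence, and then $\int_0^T|\delta f_s|\,ds\to0$ in probability because $\int_0^T|\delta f_s|\,ds\le\lambda\int_0^T\|\delta Z_s\|\,ds+\int_0^T|f(s,Y^n_s,Z_s)-f(s,Y_s,Z_s)|\,ds$, the first term tending to $0$ by the above and the second $P$-a.s.\ by (H4), the convergence $Y^n_t\to Y_t$, and dominated convergence (the integrand being dominated with the help of (H1), (H2), (d) and $Z\in\mathcal H$).

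To obtain the convergence of $\int_0^{\tau_k}\|\delta Z_s\|^2\,ds$ the plan is to apply the Gal'chuk--Lenglart change-of-variables formula for regulated semimartingales to $(\delta Y)^2$ on $[0,\tau_k]$. As $x\mapsto x^2$ is convex, after solving the identity for $\int_0^{\tau_k}\|\delta Z_s\|^2\,ds$ the jump-correction terms have a favourable sign and can be dropped, save for the term $\sum_s(\delta Y_{s-}-\delta Y_s)\Delta^-D^{n}_s$ arising when one replaces $\delta Y_{s-}$ by $\delta Y_s$ in the integral against $dD^{n,*}$; this is where (c) enters, dominating the left jumps $\Delta^-\delta D_s=-\Delta^-\delta Y_s$ uniformly in $n$ by those of $C$ (and making the term vanish when $\Delta^-C=0$). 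Using (H2) to bound $\delta Y_s\big(f(s,Y^n_s,Z^n_s)-f(s,Y_s,Z^n_s)\big)\le\mu(\delta Y_s)^2$ and (H1) to bound $2\lambda|\delta Y_s|\,\|\delta Z_s\|\le 2\lambda^2(\delta Y_s)^2+\tfrac12\|\delta Z_s\|^2$, localizing the Brownian stochastic integral by the stopping times $\inf\{t:\int_0^t\|\delta Z_s\|^2\,ds\ge j\}$ and letting $j\to\infty$, one would arrive at
\[
E\int_0^{\tau_k}\|\delta Z_s\|^2\,ds\le 2E(\delta Y_{\tau_k})^2+cE\int_0^{\tau_k}(\delta Y_s)^2\,ds+4E\Big(\int_0^{\tau_k}\delta Y_s\,dD^{n,*}_s+\sum_{s<\tau_k}\delta Y_s\,\Delta^+D^n_s\Big)+\varepsilon^n_k ,
\]
with $c=c(\mu,\lambda)$ and $\varepsilon^n_k\to0$ (absorbing the $dD$-part, which tends to $0$ by dominated convergence since $E|D|_{\tau_k}<\infty$, together with the reconciliation term). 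The first two terms on the right tend to $0$ by dominated convergence ($\delta Y_{\tau_k}\to0$, $\delta Y_s\to0$, $|\delta Y|$ bounded on $[0,\tau_k]$ uniformly in $n$), while the bracketed term has $\limsup_n\le0$ $P$-a.s.\ by (b) read with $Y-Y^n=-\delta Y$, and is uniformly integrable thanks to the moment bounds on $D^n$. Thus $E\int_0^{\tau_k}\|\delta Z_s\|^2\,ds\to0$; in particular this holds in probability, and when $\Delta^-C=0$ it is already the $L^1$-convergence yielding \eqref{eq2.1} for $p=2$ via H\"older's inequality.

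For general $p\in(0,2)$ it remains to upgrade $\int_0^{\tau_k}\|\delta Z_s\|^2\,ds\to0$ in probability to $E\int_0^{\tau_k}\|\delta Z_s\|^p\,ds\to0$; the plan is to prove uniform integrability of $\{(\int_0^{\tau_k}\|\delta Z_s\|^2\,ds)^{p/2}\}_n$, for which it is enough to bound $\sup_nE(\int_0^{\tau_k}\|\delta Z_s\|^2\,ds)^{r/2}$ for some $r\in(p,2)$, and this sub-linear moment would be estimated from the same change-of-variables identity using the Burkholder--Davis--Gundy and Young inequalities together with (a) and the equation, all the boundary and finite-variation moments occurring being controlled on $[0,\tau_k]$ by the choice of $\{\tau_k\}$. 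The main difficulty is expected to be precisely this combination of integrability and jump book-keeping: absent any a priori $\mathcal S^2$-bound on $Y^n$, all the required estimates must be squeezed out of (a), (d) and the localization while the left and right jumps of $\delta D$ are tracked through (c) — and it is the left-jump contributions, which fail to vanish when $\Delta^-C\ne0$, that confine the final estimate to $p<2$ in general.
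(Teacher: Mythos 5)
The paper does not reprove this lemma: it states explicitly that the proof is the same as that of \cite[Lemma 2.8]{KRzS}, so your task was indeed to reconstruct that argument, and your overall strategy (difference of the equations, change-of-variables formula for $(\delta Y)^2$ on a localizing chain, hypotheses (H1)--(H2) for the generator, (b) for the $dD^n$-term, (a) and (d) for the uniform moment bounds, (c) for the left jumps) is the right one. However, there is a genuine error at the crux of the argument. After the change of variables, the left-jump reconciliation term is, up to sign conventions, $\sum_{s\le\tau_k}(\Delta^-(Y_s-Y^n_s))^2$, and hypothesis (c) only \emph{dominates} it by $\sum_{s\le\tau_k}(\Delta^-C_s)^2$ (or by $\sum_s\Delta^-C_s\,|\Delta^-(Y_s-Y^n_s)|$); this is a fixed nonnegative quantity that does \emph{not} tend to zero as $n\to\infty$. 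Your proposal absorbs it into an $\varepsilon^n_k\to0$ and concludes that $E\int_0^{\tau_k}|Z^n_s-Z_s|^2\,ds\to0$ unconditionally. That conclusion cannot be correct: it would yield (\ref{eq2.1}) with $p=2$ without any assumption on $\Delta^-C$, contradicting the statement you are proving, and it is also inconsistent with your own closing sentence, where you correctly identify the left-jump contributions as the reason the estimate is confined to $p<2$. The subsequent deduction that $\int_0^{\tau_k}|Z^n_s-Z_s|^2\,ds\to0$ in probability, on which you base the $dt\otimes P$-a.e.\ convergence of $Z^n$ and the convergence of $\int_0^T|\delta f_s|\,ds$, therefore rests on a false premise.

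What the argument actually yields in general is only $\limsup_nE\int_0^{\tau_k}|Z^n_s-Z_s|^2\,ds\le CE\sum_{s\le\tau_k}(\Delta^-C_s)^2$ together with $\sup_nE\int_0^{\tau_k}|Z^n_s-Z_s|^2\,ds<\infty$. To close the gap one must (i) for fixed $\varepsilon>0$ excise the finitely many stopping times exhausting $\{\Delta^-C>\varepsilon\}$ and run the same estimate on the complementary stochastic intervals, where the reconciliation term is bounded by $\varepsilon\,C_{\tau_k}$; letting $\varepsilon\downarrow0$ gives $Z^n\to Z$ in $dt\otimes P$-measure (not in probability for the integrated square); and (ii) deduce (\ref{eq2.1}) for $p\in(0,2)$ from the convergence in measure combined with the uniform integrability of $\{|Z^n_s-Z_s|^p\}_n$ on $([0,\tau_k]\times\Omega,\,dt\otimes P)$, which follows directly from the uniform $L^1$-bound on $\int_0^{\tau_k}|Z^n_s-Z_s|^2\,ds$ by de la Vall\'ee-Poussin -- no fractional Burkholder--Davis--Gundy moments are needed. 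Only when $\Delta^-C\equiv0$ does the reconciliation term vanish and the estimate give the full $p=2$ convergence. With this repair, the rest of your outline (the localization producing the chain, the treatment of $\int_0^T|\delta f_s|\,ds$ by (H1), (H4), (d) and dominated convergence) goes through.
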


\nsubsection{Existence and uniqueness of solutions}

\label{sec3}
\subsubsection{Definition of a solution and comparison results }
\begin{definition}
\label{def1} We say that  a triple $(Y, Z,K)$   of
$\mathbb{F}$-progressively measurable  processes is a solution of
the reflected backward stochastic differential equation with
right-hand side $f+dV$, terminal value $\xi$, lower barrier $L$ and upper barrier $U$ (\textnormal{RBSDE($\xi,f+dV,L,U$)}) if
\begin{enumerate}[(a)]
\item $Y$ is a regulated process and $Z\in\mathcal{H}$,
\item $R\in \mathcal{V}$,  $L_t\le Y_t\le U_t$, $t\in[0,T]$, and
\begin{align*}
\int_0^T (Y_{r-}&-\limsup_{s\uparrow r}L_{s})\,dR^{+,*}_r+\sum_{r<T}(Y_r-L_r)\Delta^+R^+_r\\
&+\int_0^T (\liminf_{s\uparrow r}U_{s}-Y_{r-})\,dR^{-,*}_r+\sum_{r<T}(U_r-Y_r)\Delta^+R^-_r=0,
\end{align*}
where $R=R^+-R^-$ is the Jordan decomposition of $R$,
\item $\int^T_0|f(r,Y_r,Z_r)|\,dr<\infty$,
\item $Y_t=\xi+\int^T_t f(r,Y_r,Z_r)\,dr
+\int_t^T\,dV_r+\int_t^T\,dR_r-\int^T_t Z_r\,dB_r,\quad t\in
[0,T]$.
\end{enumerate}
\end{definition}

Note that if $L,U$ are regulated processes  and $(Y,Z,R)$ is a
solution to {\rm RBSDE}$(\xi,f,L,U)$ then
\[
\Delta^-R^+_t=(Y_t-L_{t-}+\Delta^-V_t)^-,\qquad
\Delta^-R^-_t=(Y_t-U_{t-}+\Delta^-V_t)^+,
\]
and
\[
\Delta^+R^+_t=(Y_{t+}-L_{t}+\Delta^+V_t)^-,\qquad
\Delta^+R^-_t=(Y_{t+}-U_{t}+\Delta^+V_t)^+.
\]
To check the first equality (the proofs of the other  ones are
similar) assume first that $\Delta^-R^+_t>0$ and observe that by
Definition \ref{def1}(d),
\[
\Delta^-R^+_t=-\Delta^-Y_t-\Delta^-V_t.
\]
Since by Definition \ref{def1}(b), $Y_{t-}=L_{t-}$,  the desired
equality holds true. Now assume that $\Delta^-R^+_t=0$.  Since
$\Delta^-R^-_t\geq0$  and $Y_{t-}\geq L_{t-}$,
\[
Y_t-L_{t-}+\Delta^-V_t=\Delta^-Y_t+Y_{t-}-L_{t-}+\Delta^-V_t
=-\Delta R^+_t+\Delta^-R^-_{t}+Y_{t-}-L_{t-}\geq0,
\]
which completes the proof.

From the above equalities it follows  in  particular that if the
barriers and $V$ are c\`adl\`ag (resp. c\`agl\`ad), then $Y$ is
c\`adl\`ag (resp. c\`agl\`ad).

\begin{proposition}\label{proposition2.2}
Let $(Y^i,Z^i,R^i)$ be a solution of
\textnormal{RBSDE($\xi^i,f^i+dV^i,L^i,U^i$)}, $i=1,2$. Assume that
$f^1$ satisfies \mbox{\rm (H1), (H2)} and $\xi^1\le\xi^2$,
$f^1(\cdot,Y^2,Z^2)\le f^2(\cdot,Y^2,Z^2)$ $dt\otimes dP$-a.s.,
$dV^1\le dV^2$, $L^1\le L^2$, $U^1\le U^2$. If
$(Y^1-Y^2)^+\in\mathcal{S}^p$ for some $p>1$, then $Y^1\le Y^2$.
\end{proposition}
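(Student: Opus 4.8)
The plan is to prove the comparison principle by the standard linearization/Itô argument, adapted to the regulated setting. Write $\hat Y=Y^1-Y^2$, $\hat Z=Z^1-Z^2$, $\hat R=R^1-R^2$, $\hat V=V^1-V^2$, $\hat\xi=\xi^1-\xi^2\le0$. Since $\hat Y$ is regulated and $\hat Z\in\mathcal H$, and $(\hat Y)^+\in\mathcal S^p$ for some $p>1$, I will apply the Itô--Tanaka/Meyer formula to $((\hat Y_t)^+)^2$ (or rather work with the function $\varphi(x)=((x)^+)^2$, which is $C^1$ with convex derivative), taking into account both the left jumps and the right jumps of the regulated semimartingale $\hat Y$. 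The generator difference is handled as usual: write
\[
f^1(r,Y^1_r,Z^1_r)-f^2(r,Y^2_r,Z^2_r)=\big(f^1(r,Y^1_r,Z^1_r)-f^1(r,Y^2_r,Z^1_r)\big)+\big(f^1(r,Y^2_r,Z^1_r)-f^1(r,Y^2_r,Z^2_r)\big)+\big(f^1(r,Y^2_r,Z^2_r)-f^2(r,Y^2_r,Z^2_r)\big),
\]
so that (H2) controls the first term by $\mu\hat Y_r$ on $\{\hat Y_r>0\}$, (H1) controls the second term by $\lambda|\hat Z_r|$, and the third term is $\le0$ by hypothesis. On $\{\hat Y_r>0\}$ one also has $Y^1_r>Y^2_r\ge L^2_r\ge L^1_r$ and $Y^1_r>Y^2_r$, so combined with $U^1_r\ge Y^1_r$ one checks that the contribution of $d\hat R$ is nonpositive; this is the step where the minimality condition (b) of Definition~\ref{def1} for both solutions, together with $L^1\le L^2$, $U^1\le U^2$, enters.

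Concretely, I would carry out the following steps. First, fix $t$ and apply the change-of-variables formula for regulated (hence of the form càdlàg $+$ pure right-jump) processes to $((\hat Y_s)^+)^2$ between $t$ and $T$: this produces the absolutely-continuous drift term, the continuous martingale part $\int 2(\hat Y_{s})^+\hat Z_s\,dB_s$, a nonnegative quadratic term $\int_t^T \mathbf 1_{\{\hat Y_s>0\}}\|\hat Z_s\|^2\,ds$, and jump corrections at the left and right jumps of $\hat Y$. Second, estimate the jump terms: at a left jump, $((\hat Y_t)^+)^2-((\hat Y_{t-})^+)^2-2(\hat Y_{t-})^+\Delta^-\hat Y_t$ is controlled, and using $\Delta^-\hat R^{+}_t, \Delta^-\hat R^-_t$ and the explicit jump formulas recorded after Definition~\ref{def1}, one shows the jump contributions coming from $d\hat R$ do not spoil the sign; similarly for right jumps using $\Delta^+\hat R$. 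Third, use the minimality condition to show $\int_t^T \mathbf 1_{\{\hat Y_{s-}>0\}}\,d\hat R^{+,*}_s\le0$, $\sum (\ldots)\Delta^+\hat R^+\le0$, etc. — the key point being that on the set where $\hat Y>0$ the lower obstacle is inactive for $R^1$ and the upper obstacle inactive for $R^2$, while $R^1$'s upper-reflection and $R^2$'s lower-reflection push $\hat Y$ in the favourable direction. Fourth, take expectations (the local martingale is a genuine martingale after localization by a chain $\{\tau_k\}$, using $(\hat Y)^+\in\mathcal S^p$, $p>1$, and $\hat Z\in\mathcal H$ to pass to the limit), apply Young's inequality to absorb $\lambda(\hat Y_s)^+|\hat Z_s|$ into $\tfrac12\mathbf 1_{\{\hat Y_s>0\}}\|\hat Z_s\|^2$ plus a multiple of $((\hat Y_s)^+)^2$, and obtain
\[
E((\hat Y_t)^+)^2\le (2\mu+\lambda^2)\,E\int_t^T((\hat Y_s)^+)^2\,ds,
\]
after which Gronwall's lemma gives $E((\hat Y_t)^+)^2=0$ for all $t$, hence $Y^1\le Y^2$.

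The main obstacle I expect is the careful bookkeeping of the jump terms of the regulated processes and verifying rigorously that the $d\hat R$-contributions (both the càdlàg part $d\hat R^*$ and the discrete right-jump sums $\sum\Delta^+\hat R$, and the left jumps $\Delta^-\hat R$) are nonpositive on $\{\hat Y>0\}$ — this is where the generalized minimality condition \eqref{eq.intr.min}/Definition~\ref{def1}(b) for \emph{both} solutions must be combined with the obstacle inequalities $L^1\le L^2\le Y^2\le Y^1\le U^1$, $Y^2\le U^2$ and the explicit jump identities recorded just after Definition~\ref{def1}; in the one-barrier càdlàg case this is routine, but here one must handle left limits ($\limsup_{s\uparrow r}L_s$, $\liminf_{s\uparrow r}U_s$) and right jumps separately, and show for instance that $\mathbf 1_{\{\hat Y_{r-}>0\}}(Y^1_{r-}-\limsup_{s\uparrow r}L^1_s)$ may fail to vanish but the corresponding term for $R^2$ compensates, or is itself nonpositive. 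The localization to ensure the stochastic integral has zero expectation, and the integrability needed to pass to $t$ fixed (rather than a stopping time) in the Gronwall step, also require the hypothesis $(\hat Y)^+\in\mathcal S^p$, $p>1$; this is precisely why $p>1$ is assumed here, and I would flag that the $p=1$ case is genuinely different and not covered by this argument.
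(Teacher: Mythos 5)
Your overall strategy is the same as the paper's: apply an It\^o--Tanaka type change-of-variables formula for regulated semimartingales to a convex power of $(Y^1-Y^2)^+$, split the generator difference so that (H2) handles the $y$-increment, (H1) the $z$-increment, and the hypothesis $f^1(\cdot,Y^2,Z^2)\le f^2(\cdot,Y^2,Z^2)$ the remainder, and kill the $d(R^1-R^2)$ contribution on $\{Y^1>Y^2\}$ by combining the minimality condition of Definition~\ref{def1}(b) for \emph{both} solutions with $L^1\le L^2\le Y^2$ and $Y^1\le U^1\le U^2$ (so that on $\{Y^1_{r-}>Y^2_{r-}\}$ one has $Y^1_{r-}>\limsup_{s\uparrow r}L^1_s$ and $Y^2_{r-}<\liminf_{s\uparrow r}U^2_s$, which annihilates $dR^{1,*,+}$ and $dR^{2,*,-}$ there, and similarly for the right jumps). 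This is exactly the content of \eqref{min2.2.1}--\eqref{min2.2.2} in the paper, and your identification of this as the crux is correct.

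There is, however, one genuine gap: you apply the formula to $((\hat Y_t)^+)^2$ and close the argument with Gronwall on $E((\hat Y_t)^+)^2$, but the hypothesis only gives $(\hat Y)^+\in\mathcal S^p$ for \emph{some} $p>1$, which may well be $p<2$. In that case $E\sup_t((\hat Y_t)^+)^2$ and $E\int_t^T((\hat Y_s)^+)^2\,ds$ need not be finite, so the Gronwall step is vacuous, and --- more importantly --- after localizing by $\tau_k$ you cannot pass to the limit in $E((\hat Y_{\tau_k})^+)^2\to E((\hat\xi)^+)^2=0$, since that requires uniform integrability of $\{((\hat Y_{\tau_k})^+)^2\}$, i.e.\ essentially $(\hat Y)^+\in\mathcal S^p$ for some $p>2$. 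The paper avoids this by working directly with the exponent $p$ furnished by the hypothesis, i.e.\ applying \cite[Corollary A.5]{KRzS} to $((\hat Y)^+)^p$; it also sidesteps Gronwall entirely by normalizing $\mu=-4\lambda^2/(p-1)$ (via \cite[Remark 3.2]{KRzS}), so that after Young's inequality the drift and quadratic terms cancel and one reads off $E((\hat Y_\sigma)^+)^p\le E((\hat\xi)^+)^p=0$ directly, concluding by the Section Theorem. Your argument becomes correct if you replace $((\cdot)^+)^2$ by $((\cdot)^+)^p$ throughout (with the corresponding $p$-dependent constants in Young's inequality); as written, it only proves the proposition under the stronger hypothesis $(\hat Y)^+\in\mathcal S^p$ for some $p\ge2$.
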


\begin{proof}
Without loss of generality we may assume that
$\mu=-\frac{4\lambda^2}{p-1}$ (see \cite[Remark 3.2]{KRzS}). By
(H1), (H2) and the fact that $f^1(\cdot,Y^2,Z^2)\le
f^2(\cdot,Y^2,Z^2)$ $dt\otimes dP$-a.s., we have
\begin{align}
\nonumber\label{min2.2.1.0}
&((Y^1_r-Y^2_r)^+)^{p-1}(f^1(r,Y^1_r,Z^1_r)-f^2(r,Y^2_r,Z^2_r))\\
&\quad\le((Y^1_r-Y^2_r)^+)^{p-1}(f^1(r,Y^1_r,Z^1_r)-f^1(r,Y^2_r,Z^2_r))\nonumber\\
&\quad\le-\frac{4\lambda^2}{p-1}((Y^1_r-Y^2_r)^+)^p
+\lambda((Y^1_r-Y^2_r)^+)^{p-1}|Z^1_r-Z^2_r|.
\end{align}
Note that, by the minimality condition for $R^1,R^2$ and the
assumption that $L^1\le L^2$ and $U^1\le U^2$,
\begin{align}\label{min2.2.1}
\mathbf{1}_{\{Y^1_{r-}>Y^2_{r-}\}}\,d(R^1_r-R^2_r)^*\le \mathbf{1}_{\{Y^1_{r-}>Y^2_{r-}\}}\,dR^{1,*,+}_r+\mathbf{1}_{\{Y^1_{r-}>Y^2_{r-}\}}\,dR^{2,*,-}_r=0,
\end{align}
and
\begin{align}\label{min2.2.2}
\mathbf{1}_{\{Y^1_r>Y^2_r\}}\Delta^+(R^1_r-R^2_r)\le\mathbf{1}_{\{Y^1_r>Y^2_r\}}\Delta^+R^{1,+}_r+\mathbf{1}_{\{Y^1_r>Y^2_r\}}\Delta^+R^{1,-}_r=0.
\end{align}
By \cite[Corollary A.5]{KRzS}, for $\tau,\sigma\in\Gamma$ such
that $\sigma\le\tau$ we have
\begin{align*}
&((Y^1_{\sigma}-Y^2_{\sigma})^+)^p
+\frac{p(p-1)}{2}\int^{\tau}_{\sigma}((Y^1_r-Y^2_r)^+)^{p-2}
\mathbf{1}_{\{Y^1_r>Y^2_r\}}|Z^1_r-Z^2_r|^2\,dr\\
&\quad\le((Y^1_{\tau}-Y^2_{\tau})^+)^p+p\int^{\tau}_{\sigma}
((Y^1_r-Y^2_r)^+)^{p-1}(f^1(r,Y^1_r,Z^1_r)-f^2(r,Y^2_r,Z^2_r))\,dr\\
&\quad\quad+p\int^{\tau}_{\sigma}((Y^1_{r-}-Y^2_{r-})^+)^{p-1}
\,d(V^1_r-V^2_r)^*+p\sum_{\sigma\le r<\tau}((Y^1_r-Y^2_r)^+)^{p-1}
\Delta^+(V^1_r-V^2_r)\\
&\quad\quad+p\int^{\tau}_{\sigma}((Y^1_{r-}-Y^2_{r-})^+)^{p-1}\mathbf{1}_{\{Y^1_r>Y^2_r\}}\,d(R^1_r-R^2_r)^*\\
&\quad\quad+p\sum_{\sigma\le r<\tau}((Y^1_r-Y^2_r)^+)^{p-1}\mathbf{1}_{\{Y^1_r>Y^2_r\}}\Delta^+(R^1_r-R^2_r)\\
&\quad\quad-p\int^{\tau}_{\sigma}((Y^1_r-Y^2_r)^+)^{p-1}(Z^1_r-Z^2_r)\,dB_r.
\end{align*}
By the above inequality, (\ref{min2.2.1.0})--(\ref{min2.2.2}) and
the assumption that $dV^1\le dV^2$, we get
\begin{align}
\label{eq2.2.0}
&((Y^1_{\sigma}-Y^2_{\sigma})^+)^p+\frac{p(p-1)}{2}
\int^{\tau}_{\sigma}((Y^1_r-Y^2_r)^+)^{p-2}
\mathbf{1}_{\{Y^1_r>Y^2_r\}}|Z^1_r-Z^2_r|^2\,dr\nonumber\\
&\quad\le((Y^1_{\tau}-Y^2_{\tau})^+)^p
-\frac{4p\lambda^2}{p-1}\int^{\tau}_{\sigma}((Y^1_r-Y^2_r)^+)^p\,dr
+p\lambda\int^{\tau}_{\sigma}
((Y^1_r-Y^2_r)^+)^{p-1}|Z^1_r-Z^2_r|\,dr\nonumber\\
&\quad\quad-p\int^{\tau}_{\sigma}((Y^1_r-Y^2_r)^+)^{p-1}
(Z^1_r-Z^2_r)\,dB_r.
\end{align}
Note that
\begin{align*}
&p\lambda((Y^1_r-Y^2_r)^+)^{p-1}|Z^1_r-Z^2_r|\\
&\quad=p((Y^1_r-Y^2_r)^+)^{p-2}\mathbf{1}_{\{Y^1_r>Y^2_r\}}
(\lambda(Y^1_r-Y^2_r)^+|Z^1_r-Z^2_r|)\\
&\quad\le p((Y^1_r-Y^2_r)^+)^{p-2}\mathbf{1}_{\{Y^1_r>Y^2_r\}}
\Big(\frac{4\lambda^2}{p-1}((Y^1_r-Y^2_r)^+)^2+\frac{p-1}{4}
|Z^1_r-Z^2_r|^2\Big)\\
&\quad\le\frac{4p\lambda^2}{p-1}((Y^1_r-Y^2_r)^+)^p
+\frac{p(p-1)}{4}((Y^1_r-Y^2_r)^+)^{p-2}
\mathbf{1}_{\{Y^1_r>Y^2_r\}}|Z^1_r-Z^2_r|^2.
\end{align*}
From this and (\ref{eq2.2.0}) it follows that
\begin{align}
\label{eq2}
&((Y^1_{\sigma}-Y^2_{\sigma})^+)^p+\frac{p(p-1)}{4}
\int^\tau_{\sigma}((Y^1_r-Y^2_r)^+)^{p-2}
\mathbf{1}_{\{Y^1_r>Y^2_r\}}|Z^1_r-Z^2_r|^2\,dr\nonumber\\
&\quad\le((Y^1_{\tau}-Y^2_{\tau})^+)^p-p\int^\tau_{\sigma}
((Y^1_r-Y^2_r)^+)^{p-1}(Z^1_r-Z^2_r)\,dB_r.
\end{align}
Let $\{\tau_k\}\subset\Gamma$ be a localizing sequence for the
local martingale $\int^{\cdot}_{\sigma}
((Y^1_r-Y^2_r)^+)^{p-1}(Z^1_r-Z^2_r)\,dB_r$. By (\ref{eq2})  with
$\tau$ replaced by $\tau_k\ge\sigma$, we have
\[
((Y^1_{\sigma}-Y^2_{\sigma})^+)^p
\leq((Y^1_{\tau_k}-Y^2_{\tau_k})^+)^p-p\int^{\tau_k}_{\sigma}
((Y^1_r-Y^2_r)^+)^{p-1}(Z^1_r-Z^2_r)\,dB_r,\quad k\in\N.
\]
Taking the expectation and then letting $k\rightarrow\infty$, we
get $E((Y^1_{\sigma}-Y^2_{\sigma})^+)^p=E((\xi^1-\xi^2)^+)^p=0$.
Hence, by the Section Theorem (see, e.g., \cite[Chapter IV,
Theorem 86]{dm}), $(Y^1_t-Y^2_t)^+=0$, $t\in[0,T]$.
\end{proof}

\begin{lemma}\label{uw2.0} Let $(Y^i,Z^i,R^i)$ be
a solution of \textnormal{RBSDE($\xi^i,f^i+dV^i,L^i,U^i)$},
$i=1,2$.  Assume that $f^1$ satisfies \mbox{\rm(H2),(Z)}, $Y^1,Y^2$ are of class \mbox{\rm{(D)}} and
$Z^1,Z^2\in L^q((0,T)\otimes\Omega)$ for some $q\in(\alpha,1]$.
Assume also that  $\xi^1\le\xi^2$, $f^1(\cdot,Y^2,Z^2)\le
f^2(\cdot,Y^2,Z^2)$ $dt\otimes dP$-a.s., $dV^1\le dV^2$, $L^1\le
L^2$, $U^1\le U^2$. Then $(Y^1-Y^2)^+\in\mathcal{S}^p$ for
$p=\frac{q}{\alpha}$.
\end{lemma}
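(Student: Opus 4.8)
The plan is to derive an Itô-type inequality for $((Y^1-Y^2)^+)^p$ along a localizing chain and then show that the right-hand side is integrable enough to conclude that $(Y^1-Y^2)^+$ is in $\mathcal{S}^p$ with $p=q/\alpha$. Concretely, I would first apply the change-of-variables formula for regulated processes (the analogue of \cite[Corollary A.5]{KRzS} used in the proof of Proposition \ref{proposition2.2}) to the process $((Y^1_r-Y^2_r)^+)^p$. As in (\ref{min2.2.1})--(\ref{min2.2.2}), the minimality conditions for $R^1,R^2$ together with $L^1\le L^2$, $U^1\le U^2$ kill the $dR$-terms on the set $\{Y^1>Y^2\}$ (for the continuous parts) and $\{Y^1_{r-}>Y^2_{r-}\}$ (for the jump parts); the assumption $dV^1\le dV^2$ kills the $dV$-terms; and $\xi^1\le\xi^2$ makes the terminal term vanish. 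Using (H2) for $f^1$ and $f^1(\cdot,Y^2,Z^2)\le f^2(\cdot,Y^2,Z^2)$ one bounds the generator contribution by $\mu((Y^1_r-Y^2_r)^+)^p + ((Y^1_r-Y^2_r)^+)^{p-1}|f^1(r,Y^1_r,Z^1_r)-f^1(r,Y^1_r,0)|$, and here is the point at which condition (Z) enters: the last factor is at most $\gamma(g_r+|Y^1_r|+|Z^1_r|)^\alpha$.

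The resulting estimate, after discarding the nonnegative quadratic term, should read roughly
\[
((Y^1_{\sigma}-Y^2_{\sigma})^+)^p \le ((Y^1_{\tau_k}-Y^2_{\tau_k})^+)^p + p|\mu|\int_\sigma^{\tau_k}((Y^1_r-Y^2_r)^+)^p\,dr + p\gamma\int_\sigma^{\tau_k}((Y^1_r-Y^2_r)^+)^{p-1}(g_r+|Y^1_r|+|Z^1_r|)^\alpha\,dr - p\int_\sigma^{\tau_k}((Y^1_r-Y^2_r)^+)^{p-1}(Z^1_r-Z^2_r)\,dB_r,
\]
where $\{\tau_k\}$ is a chain localizing the stochastic integral. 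I would then take conditional expectations given $\mathcal{F}_\sigma$, let $k\to\infty$ (using that $Y^1,Y^2$ are of class (D), so $((Y^1_{\tau_k}-Y^2_{\tau_k})^+)^p$ is uniformly integrable, and Fatou on the left if needed), and take the supremum over $\sigma\in\Gamma$ followed by the expectation, or work directly with the uniform bound on $\sup_t((Y^1_t-Y^2_t)^+)^p$ obtained from the inequality. The $dr$-term $\int_0^T((Y^1_r-Y^2_r)^+)^p\,dr$ is controlled by $T\sup_t((Y^1_t-Y^2_t)^+)^p$ and can be absorbed/handled by a Gronwall argument.

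The main obstacle is integrability of the (Z)-term: one must bound $E\int_0^T ((Y^1_r-Y^2_r)^+)^{p-1}(g_r+|Y^1_r|+|Z^1_r|)^\alpha\,dr$ knowing only $Z^1,Z^2\in L^q$ with $q\in(\alpha,1]$ and $Y^1,Y^2$ of class (D) (hence, by the Remark, $Y^i\in\mathcal{S}^{q'}$ for all $q'<1$, in particular for $q'$ close to $1$). Writing $((Y^1-Y^2)^+)^{p-1}\le (2\sup_t(|Y^1_t|\vee|Y^2_t|))^{p-1}$ and applying Young's/Hölder's inequality with the conjugate pair chosen so that the $|Z^1|^\alpha$ factor pairs with exponent $q/\alpha=p>1$, the term $|Z^1|^\alpha$ contributes $\|(\int_0^T|Z^1_r|^2\,dr)^{1/2}\|_q^{\alpha}$-type quantities that are finite, while the $Y$-factors contribute $\mathcal{S}^{q'}$-norms with $q'<1$ arbitrarily close to $1$, which suffices because $p-1$ and $\alpha$ leave a strictly positive exponent budget (this is exactly why $q>\alpha$ is assumed and why the answer is $p=q/\alpha$ rather than something larger). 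Assembling these Hölder estimates carefully — choosing the exponents so that every factor lands in a space where we have an a priori bound, and checking that no exponent exceeds what class (D) plus the Remark gives — is the delicate bookkeeping of the proof; once it is in place, a standard Gronwall step finishes it.
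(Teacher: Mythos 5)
Your overall strategy --- It\^o for the $p$-th power, kill the $R$- and $V$-terms by minimality, invoke (Z), then H\"older/Gronwall --- does not close, and the step you yourself flag as ``the delicate bookkeeping'' is precisely where it breaks. After discarding the quadratic term you are left with having to bound
\[
E\int_0^T\big((Y^1_r-Y^2_r)^+\big)^{p-1}\big(g_r+|Y^1_r|+|Y^2_r|+|Z^1_r|+|Z^2_r|\big)^{\alpha}\,dr .
\]
Pairing the factor $|Z^1_r|^{\alpha}$ (which lies in $L^{q/\alpha}=L^{p}$ of $dt\otimes dP$) with $((Y^1-Y^2)^+)^{p-1}$ by H\"older forces the latter into $L^{p/(p-1)}$, i.e.\ you need $E\sup_t((Y^1_t-Y^2_t)^+)^p<\infty$ --- which is exactly the conclusion of the lemma. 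Replacing $((Y^1-Y^2)^+)^{p-1}$ by $(2\sup_t(|Y^1_t|\vee|Y^2_t|))^{p-1}$ does not help: class (D) together with the remark in Section 2 only gives $Y^i\in\mathcal S^{q'}$ for $q'<1$, whereas the conjugate exponent turns $(p-1)$ into $(p-1)\cdot\frac{p}{p-1}=p>1$, so there is no ``strictly positive exponent budget'' --- the budget is negative. The alternative, Young's inequality $a^{p-1}b\le\varepsilon a^p+C_\varepsilon b^p$ followed by absorption of $\varepsilon\sup_t(\cdot)^p$ into the left-hand side, is circular without a priori finiteness of that supremum, and your chain $\{\tau_k\}$ (a reducing sequence for the stochastic integral) does not supply it.

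The paper avoids all of this by never raising to the power $p$ inside the It\^o formula. It applies \cite[Corollary A.5]{KRzS} to the \emph{first} power $(Y^1-Y^2)^+$; the treatment of the $R$-, $V$- and generator terms is the same as yours, and (Z) then yields, after letting $k\to\infty$ along the localizing sequence (class (D) and $\xi^1\le\xi^2$ dispose of the terminal term),
\[
(Y^1_{\sigma}-Y^2_{\sigma})^+\le 2\gamma\, E\Big(\int_0^T\big(g_r+|Y^1_r|+|Y^2_r|+|Z^1_r|+|Z^2_r|\big)^{\alpha}\,dr\,\Big|\,\mathcal F_{\sigma}\Big)
\]
for every $\sigma\in\Gamma$. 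Thus $(Y^1-Y^2)^+$ is dominated by the martingale $E(\Theta\,|\,\mathcal F_\cdot)$ with $\Theta=2\gamma\int_0^T(\cdots)^{\alpha}\,dr\in\mathbb{L}^{p}$ (Jensen in $t$ together with $\alpha p=q$, $Z^i\in L^q((0,T)\otimes\Omega)$ and $E|Y^i_r|\le|Y^i|_1$), and Doob's maximal inequality --- applicable precisely because $p=q/\alpha>1$ --- gives $(Y^1-Y^2)^+\in\mathcal S^p$ in one line, with no Gronwall step and no H\"older pairing against suprema of $Y^i$. To salvage your route you would have to interpose exactly this linear, martingale-domination step; as written, the proposal has a genuine gap.
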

\begin{proof}
By \cite[Corollary A.5]{KRzS}, the assumptions, (H2),
(\ref{min2.2.1}) and (\ref{min2.2.2}), for all $\sigma,\tau\in\Gamma$,
such that $\sigma\le \tau$ we have
\begin{align}\label{eq2.2.0.0}
\nonumber
(Y^1_{\sigma}-Y^2_{\sigma})^+&\le(Y^1_{\tau}-Y^2_{\tau})^+
+\int^{\tau}_{\sigma}\mathbf{1}_{\{Y^1_r>Y^2_r\}}(f^1(r,Y^1_r,Z^1_r)
-f^2(r,Y^2_r,Z^2_r))\,dr\\ \nonumber
&\quad+\int^{\tau}_{\sigma}\mathbf{1}_{\{Y^1_{r-}>Y^2_{r-}\}}\,d(V^1_r-V^2_r)^*
+\sum_{\sigma\le r<\tau}\mathbf{1}_{\{Y^1_r>Y^2_r\}}\Delta^+(V^1_r-V^2_r)\\ \nonumber
&\quad+\int^{\tau}_{\sigma}\mathbf{1}_{\{Y^1_{r-}>Y^2_{r-}\}}\,d(R^1_r-R^2_r)^*
+\sum_{\sigma\le r<\tau}\mathbf{1}_{\{Y^1_r>Y^2_r\}}\Delta^+(R^1_r-R^2_r)\\ \nonumber
&\quad-\int^{\tau}_{\sigma}\mathbf{1}_{\{Y^1_r>Y^2_r\}}(Z^1_r-Z^2_r)\,dB_r\\ \nonumber
&\le\int^{\tau}_{\sigma}\mathbf{1}_{\{Y^1_r>Y^2_r\}}(f^1(r,Y^2_r,Z^1_r)
-f^1(r,Y^2_r,Z^2_r))\,dr\\
&\quad-\int^{\tau}_{\sigma}\mathbf{1}_{\{Y^1_r>Y^2_r\}}(Z^1_r-Z^2_r)\,dB_r.
\end{align}
By (Z),
\begin{align*}
|f^1(r,Y^2_r,Z^1_r)-f^1(r,Y^2_r,Z^2_r)|
&\le|f^1(r,Y^2_r,Z^1_r)-f^1(r,Y^2_r,0)|\\
&\quad+|f^1(r,Y^2_r,0)-f^1(r,Y^2_r,Z^2_r)|\\
&\le2\gamma(g_r+|Y^1_r|+|Y^2_r|+|Z^1_r|+|Z^2_r|)^{\alpha}.
\end{align*}
Let $\{\tau_k\}$ be a localizing sequence for the local martingale $\int^{\cdot}_{\sigma}\mathbf{1}_{\{Y^1_r>Y^2_r\}}(Z^1_r-Z^2_r)\,dB_r$. From the above inequality and (\ref{eq2.2.0.0}) we get
\begin{align*}
(Y^1_{\sigma}-Y^2_{\sigma})^+\le E\Big((Y^1_{\tau_k}-Y^2_{\tau_k})^++2\gamma
\int^T_0(g_r+|Y^1_r|+|Y^2_r|+|Z^1_r|
+|Z^2_r|)^{\alpha}\,dr|\mathcal{F}_{\sigma}\Big).
\end{align*}
Since $Y^1,Y^2$ are of class (D), $\{\tau_k\}$ is a chain and
$\xi^1\le\xi^2$, letting $k\rightarrow\infty$ in the above
inequality we get
\begin{align*}
(Y^1_{\sigma}-Y^2_{\sigma})^+\le 2\gamma E\Big(\int^T_0(g_r+|Y^1_r|+|Y^2_r|+|Z^1_r|
+|Z^2_r|)^{\alpha}\,dr|\mathcal{F}_{\sigma}\Big).
\end{align*}
Let $p=q/\alpha$. By Doob's inequality,
\begin{align*}
E\sup_{t\le T}((Y^1_t-Y^2_t)^+)^p
\le C_pE\Big(\int^T_0(g_r+|Y^1_r|+|Y^2_r|+|Z^1_r|+|Z^2_r|)^q\,dr\Big).
\end{align*}
Hence $(Y^1-Y^2)^+\in\mathcal{S}^p$.
\end{proof}

\begin{remark}
\label{uwaga2.3} Observe that if $f^1,\, f^2$ do not depend  on
$z$, then in Proposition \ref{proposition2.2} it  is enough to
assume that $(Y^1-Y^2)^+$ is of class \textnormal{(D)}.
\end{remark}

\begin{proposition}\label{prop2.5}
Let $(Y,Z,R)$ be a solution of \textnormal{RBSDE}($\xi,f+dV,L,U$).
Assume that $p>1$, \textnormal{(H1), (H2), (H3)} are satisfied,
$Y\in\mathcal{S}^p$, $Z\in\mathcal{H}^p$, $R\in\mathcal{V}^p$ or
$p=1$, \textnormal{(H1), (H2), (H3), (Z)} are satsfied, $Y$ is of
class \textnormal{(D)}, $Z\in\mathcal{H}^q$, $q\in(0,1)$ and
$R\in\mathcal{V}^1$. Then,
\begin{equation}\label{prop2.5.0}
E\Big(\int^T_0|f(r,Y_r,Z_r)|\,dr\Big)^p<\infty.
\end{equation}
\end{proposition}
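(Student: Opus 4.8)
The plan is to estimate $E\big(\int_0^T|f(r,Y_r,Z_r)|\,dr\big)^p$ by comparing $f(r,Y_r,Z_r)$ with the values of $f$ along the ``nice'' process $X$ from assumption (H6) (or (H6*) when $p=1$) and then using the equation itself to control the remaining terms.

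First I would write, using (H1), (H2), (H4), (H5),
\[
|f(r,Y_r,Z_r)|\le|f(r,X_r,0)|+|f(r,Y_r,Z_r)-f(r,X_r,0)|
\le|f(r,X_r,0)|+\mu^-|Y_r-X_r|+\lambda|Z_r|+\varphi_r,
\]
where $\varphi$ comes from the continuity/monotonicity structure; more precisely, since $f$ need only be continuous in $y$, I would not bound $|f(r,Y_r,0)-f(r,X_r,0)|$ pointwise in a Lipschitz way but rather use the one-sided estimate coming from (H2) together with the sign-based bound $\langle \mathrm{sgn}(Y_r-X_r), f(r,Y_r,0)-f(r,X_r,0)\rangle\le\mu|Y_r-X_r|$, reducing everything to the integrability of $\int_0^T|f(r,X_r,0)|\,dr$ (finite by (H6)/(H6*)), of $\sup_t|Y_t-X_t|$, and of $(\int_0^T|Z_r|^2\,dr)^{1/2}$. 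The term with $\lambda|Z_r|$ is handled by Cauchy--Schwarz: $\int_0^T|Z_r|\,dr\le T^{1/2}(\int_0^T|Z_r|^2\,dr)^{1/2}$, which lies in $\LL^p$ since $Z\in\HH^p$ (case $p>1$), and an analogous $\LL^q$ bound in the case $p=1$. Since $Y\in\mathcal S^p$ (resp. class (D)) and $X\in\mathcal S^p$ (resp. class (D)), $\sup_t|Y_t-X_t|\in\LL^p$ (using the Remark after (H6*) for the case $p=1$, where class (D) gives $\mathcal S^q$ membership for all $q<1$ — but here we only need $|X|_1<\infty$).

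The main work is the case $p=1$, where one cannot simply take $p$-th powers and must instead argue via stopping times and the $|\cdot|_1$-norm. Here I would exploit condition (Z): it gives $|f(r,Y_r,Z_r)-f(r,Y_r,0)|\le\gamma(g_r+|Y_r|+|Z_r|)^\alpha$ with $\alpha<1$, so that after applying Young's inequality one gets $\int_0^T|f(r,Y_r,Z_r)|\,dr\lesssim \int_0^T|f(r,X_r,0)|\,dr+\sup_t|Y_t-X_t|+\int_0^T(g_r+|Y_r|)^\alpha\,dr+\int_0^T|Z_r|^\alpha\,dr$, and since $Z\in\HH^q$ for some $q\in(0,1)$ with $q>\alpha$ can be arranged, Hölder's inequality with exponent $q/\alpha>1$ yields $\int_0^T|Z_r|^\alpha\,dr\in\LL^1$. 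The term $\int_0^T(g_r+|Y_r|)^\alpha\,dr$ is $\LL^1$ because $g\in L^1(0,T)$ (from the definition of (Z), interpreting $g$ appropriately) and $Y$ of class (D) gives $Y\in\mathcal S^q$, hence $\int_0^T|Y_r|^\alpha\,dr\le T\sup_t|Y_t|^\alpha\in\LL^1$ once $\alpha<1$.

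The step I expect to be the genuine obstacle is justifying the bound on $f(r,Y_r,0)$ in terms of $f(r,X_r,0)$ when $f$ is merely continuous and monotone in $y$ rather than Lipschitz: one has only the one-sided estimate $(y-x)(f(r,y,0)-f(r,x,0))\le\mu(y-x)^2$, which controls $(f(r,Y_r,0)-f(r,X_r,0))\,\mathrm{sgn}(Y_r-X_r)$ from above but not $|f(r,Y_r,0)-f(r,X_r,0)|$ directly. The resolution is to not bound $|f|$ pointwise at all but to split $\int_0^T f(r,Y_r,Z_r)\,dr$ into its positive and negative parts and to bound each using the equation: from Definition \ref{def1}(d), $\int_0^T f(r,Y_r,Z_r)\,dr = Y_0-\xi-V_T-R_T+\int_0^T Z_r\,dB_r$, so $\int_0^T f^+\,dr$ and $\int_0^T f^-\,dr$ differ by something in $\LL^p$; combining this with the one-sided control on one of them (say, $f^+$, obtained by testing against $\mathbf 1_{\{Y_r\ge X_r\}}$ and using monotonicity plus (H6)) gives control on both. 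This is exactly the kind of argument used in \cite{KRzS}, adapted here to the presence of the signed reflection measure $R=R^+-R^-$, and the minimality condition (Definition \ref{def1}(b)) is what ensures the reflection terms have the right sign when tested against the indicators $\mathbf 1_{\{Y_r\lessgtr X_r\}}$, as in \eqref{min2.2.1}--\eqref{min2.2.2}.
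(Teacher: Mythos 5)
There is a genuine gap, and it starts with the hypotheses: Proposition \ref{prop2.5} assumes only (H1), (H2), (H3) (plus (Z) when $p=1$) — it does \emph{not} assume (H6) or (H6*). So the semimartingale $X$ with $\int_0^T|f(r,X_r,0)|\,dr\in\mathbb{L}^p$, around which your whole decomposition is built, is simply not available here. The correct comparison point is $y=0$: (H3) gives $\int_0^T|f(r,0,0)|\,dr\in\mathbb{L}^p$, and that is all the paper uses. (If you set $X\equiv 0$ your first display essentially collapses to the right estimate, but as written the argument rests on an unassumed hypothesis.)

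The second, more substantive problem is your resolution of what you correctly identify as the real obstacle — that (H2) gives only a one-sided bound. Splitting $\int_0^T f(r,Y_r,Z_r)\,dr$ into positive and negative parts and ``testing $f^+$ against $\mathbf{1}_{\{Y_r\ge X_r\}}$'' does not close: monotonicity controls only the signed product $\sgn(Y_r-X_r)\big(f(r,Y_r,z)-f(r,X_r,z)\big)$, and on the set $\{Y_r<X_r\}$ it gives no upper bound on $f(r,Y_r,0)$ whatsoever, so $\int_0^T f^+(r,Y_r,Z_r)\,dr$ is not controlled this way. The paper's mechanism is different and more direct: apply the change-of-variables formula \cite[Corollary A.5]{KRzS} to $|Y|$, which produces the term $\int_\sigma^\tau\sgn(Y_r)f(r,Y_r,Z_r)\,dr$ together with $\sgn$-weighted integrals against $dV$ and $dR$; then use the pointwise inequality (after normalizing $\mu=0$)
\[
\sgn(Y_r)f(r,Y_r,Z_r)\le-|f(r,Y_r,Z_r)|+2\lambda|Z_r|+2|f(r,0,0)|
\]
coming from (H1)--(H2) (resp.\ the (Z)-version with $2\gamma(g_r+|Z_r|)^\alpha$ in place of $2\lambda|Z_r|$ when $p=1$), which puts $\int|f(r,Y_r,Z_r)|\,dr$ alone on one side. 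The remaining terms are handled crudely: $|Y_\tau|,|Y_\sigma|$ by $Y\in\mathcal{S}^p$ (or class (D)), the $dV$ and $dR$ integrals by $|V|_T,|R|_T\in\mathbb{L}^p$ — in particular the minimality condition \eqref{min2.2.1}--\eqref{min2.2.2} is never needed — and the stochastic integral by localization. Your peripheral observations (Cauchy--Schwarz for $\int|Z_r|\,dr$, H\"older with exponent $q/\alpha$ in the $p=1$ case) are fine and do appear implicitly in the paper's final step, but the core of your argument as proposed would not go through.
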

\begin{proof}
We may assume that $\mu=0$. By \cite[Corollary A.5]{KRzS}, for all $\sigma,\tau\in\Gamma$ such
that $\sigma\le\tau$, we have
\begin{align}\label{prop2.5.1}
\nonumber
|Y_{\sigma}|&\le|Y_{\tau}|+\int^{\tau}_{\sigma}\sgn(Y_r)f(r,Y_r,Z_r)\,dr+\int^{\tau}_{\sigma}\sgn(Y_{r-})\,dV_r+\int^{\tau}_{\sigma}\sgn(Y_{r-})\,dR^*_r\\
&\quad+\sum_{\sigma\le r<\tau}\sgn(Y_r)\Delta^+R_r-\int^{\tau}_{\sigma}\sgn(Y_r)Z_r\,dB_r
\end{align}
By (Z) and (H2),
\begin{equation}\label{prop2.5.2}
\sgn(Y_r)f(r,Y_r,Z_r)\le-|f(r,Y_r,Z_r)|
+2\gamma(g_r+|Z_r|)^{\alpha}+2|f(r,0,0)|,
\end{equation}
whereas by (H1) and (H2),
\begin{equation}\label{prop2.5.3}
\sgn(Y_r)f(r,Y_r,Z_r)\le-|f(r,Y_r,Z_r)|+2\lambda|Z_r|+2|f(r,0,0)|.
\end{equation}
From  (\ref{prop2.5.1})--(\ref{prop2.5.3}) and the assumptions we
get the desired result.
\end{proof}

\subsubsection{Existence of solutions for  $f$ independent of $z$}

\begin{theorem}\label{thmz}
Assume that $f$ is independent of  $z$. If $p>1$  and
\textnormal{(H1)--(H6)} (resp. $p=1$ and \textnormal{(H1)--(H5),
(H6*)}) are satisfied, then there exists a unique solution
$(Y,Z,R)$ of \textnormal{RBSDE}($\xi$,$f+dV$,$L$,$U$) such that
$Y\in\mathcal{S}^p$, $Z\in\mathcal{H}^p$ and $R\in\mathcal{V}^p$
(resp. $Y$ is of class \textnormal{(D)}, $Z\in\mathcal{H}^q$,
$q\in(0,1)$, and $R\in\mathcal{V}^1$).
\end{theorem}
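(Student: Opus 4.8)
The plan is to reduce \textnormal{RBSDE}$(\xi,f+dV,L,U)$, by means of the decoupling system of the Introduction, to a coupled system of two reflected BSDEs each having a single lower barrier, and then to solve that system by a monotone iteration whose convergence is forced by the Mokobodzki process; the one-barrier theory of \cite{KRzS} (existence, comparison, a priori estimates, stability) is the main input throughout.

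\emph{Uniqueness.} Let $(Y,Z,R)$, $(Y',Z',R')$ be two solutions in the stated class. Since $f$ does not depend on $z$ it satisfies (H1) with $\lambda=0$, so Proposition \ref{proposition2.2} (for $p>1$), respectively Proposition \ref{proposition2.2} together with Remark \ref{uwaga2.3} (for $p=1$, as $(Y-Y')^{+}$ is then of class (D)), applied in both directions gives $Y=Y'$; then $\int_t^T(Z_r-Z'_r)\,dB_r=\int_t^T d(R_r-R'_r)$ is a continuous local martingale of finite variation, so $Z=Z'$ and $R=R'$.

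\emph{Existence.} After the exponential change of variables of \cite[Remark 3.2]{KRzS} (legitimate because $\lambda=0$) we may assume $\mu\le0$. I would construct $(Y^1,Z^1,K^1)$ and $(Y^2,Z^2,K^2)$ solving, respectively, \underline{R}BSDE with generator $(r,y)\mapsto f(r,y-Y^2_r)$, driver $dV$, terminal value $\xi$, lower barrier $L+Y^2$, and \underline{R}BSDE$(0,0,Y^1-U)$, obtained iteratively: $Y^{1,0}$ solves \underline{R}BSDE$(\xi,f+dV,L)$; given $Y^{1,n}$, let $Y^{2,n}$ solve \underline{R}BSDE$(0,0,Y^{1,n}-U)$, and given $Y^{2,n}$, let $Y^{1,n+1}$ solve the one-barrier RBSDE with generator $(r,y)\mapsto f(r,y-Y^{2,n}_r)$, driver $dV$, terminal value $\xi$ and lower barrier $L+Y^{2,n}$. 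Each of these is a one-barrier RBSDE with a monotone (by (H2)) and continuous (by (H4)) generator and an optional barrier, solvable by \cite{KRzS}, its data satisfying the hypotheses of \cite{KRzS} with the auxiliary semimartingale $X+Y^{2,n}$, respectively $Y^{1,n}-X$, where $X$ is the Mokobodzki process of (H6) (resp. (H6*)), using $L\le X\le U$ and the integrability of $\int_0^T|f(r,X_r)|\,dr$. Since $Y^{2,n}\ge0$, $\mu\le0$, and the comparison theorem for one-barrier RBSDEs holds, the sequences $\{Y^{1,n}\}$ and $\{Y^{2,n}\}$ are nondecreasing; the a priori $\LL^p$, respectively class (D), estimates for one-barrier RBSDEs together with $L\le X\le U$ furnish a uniform upper bound, so the limits $Y^1,Y^2$ are finite, lie in $\mathcal S^p$ (resp. are of class (D)), and, by the stability results of \cite{KRzS}, satisfy $Z^{i,n}\to Z^i$ in $\mathcal H^p$ (resp. in $\mathcal H^q$, $q\in(0,1)$), $K^{i,n}\to K^i$ in $\mathcal V^p$ (resp. $\mathcal V^1$), with $(Y^i,Z^i,K^i)$ solving the corresponding limit equations.

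\emph{Conclusion and main obstacle.} Putting $Y=Y^1-Y^2$, $Z=Z^1-Z^2$, $R=K^1-K^2$, the inequalities $Y^1\ge L+Y^2$ and $Y^2\ge Y^1-U$ give $L\le Y\le U$, subtraction of the two equations gives Definition \ref{def1}(d), parts (a), (c) of Definition \ref{def1} and the integrability of $Z$, $R$ come from the previous step, and Proposition \ref{prop2.5} (whose hypothesis (Z) holds trivially here) gives $E(\int_0^T|f(r,Y_r)|\,dr)^p<\infty$. For the minimality condition (Definition \ref{def1}(b)) I would start from the one-barrier minimality conditions for $K^1$ and $K^2$ and use that, $Y^1,Y^2$ being regulated, $\limsup_{s\uparrow r}(L_s+Y^2_s)=\limsup_{s\uparrow r}L_s+Y^2_{r-}$ and $\limsup_{s\uparrow r}(Y^1_s-U_s)=Y^1_{r-}-\liminf_{s\uparrow r}U_s$, so those conditions rewrite in terms of $Y$, $L$, $U$; since $dR^+\le dK^1$ and $dR^-\le dK^2$ (in the sense of the Preliminaries) and the integrands involved are nonnegative, adding them yields (\ref{eq.intr.min}). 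I expect the principal difficulty to be the a priori estimate producing the uniform bound on the iterates from the Mokobodzki condition — a Dynkin-game type estimate in the optional, merely regulated setting — together with the passage to the limit in the iteration, in which the lower barrier $L+Y^{2,n}$ of the $Y^{1,n}$-equation itself varies with $n$, so that the stability results of \cite{KRzS} must be invoked with moving barriers, as well as the careful bookkeeping of c\`adl\`ag parts and one-sided limits in the verification of (\ref{eq.intr.min}).
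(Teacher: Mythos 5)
Your overall architecture coincides with the paper's: decouple \textnormal{RBSDE}$(\xi,f+dV,L,U)$ into a system of two one-barrier equations, run a monotone iteration solved at each stage by \cite[Theorem 3.20]{KRzS}, dominate the iterates using the Mokobodzki semimartingale, and recover $(Y,Z,R)=(Y^1-Y^2,Z^1-Z^2,K^1-K^2)$; the uniqueness argument via Proposition \ref{proposition2.2} and Remark \ref{uwaga2.3}, and the final assembly (subtraction, $L\le Y\le U$, rewriting the one-barrier minimality conditions through $\limsup_{s\uparrow r}(L_s+Y^2_s)=\limsup_{s\uparrow r}L_s+Y^2_{r-}$, Proposition \ref{prop2.5} for the integrability of $\int_0^T|f(r,Y_r)|\,dr$) all match the paper. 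The minor differences in the iteration (your Gauss--Seidel ordering and your choice of $Y^{1,0}$ as a reflected rather than unreflected solution) are immaterial. However, the two steps you yourself single out as the main difficulties are exactly where your sketch does not close, and the substitutes you propose would not work as stated.

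First, the uniform bound. An ``a priori estimate for the one-barrier RBSDE'' applied to $Y^{1,n}$ necessarily involves its barrier $L+Y^{2,n-1}$ and its generator $f(\cdot,\cdot-Y^{2,n-1})$, both of which grow with $n$, so invoking such estimates ``together with $L\le X\le U$'' is circular. The paper instead constructs, once and for all, two dominating super-solutions: writing $X_t=X_T-\int_t^T\,dC_r-\int_t^T H_r\,dB_r$, recasting this as a BSDE with an additional finite-variation term $C'$, splitting $C'=C'^+-C'^-$, and letting $\tilde X^1,\tilde X^2$ solve the two BSDEs driven by $dC'^+$ and $dC'^-$; then $(\tilde X^1,\tilde H^1,0)$ and $(\tilde X^2,\tilde H^2,0)$ are themselves solutions of the two one-barrier equations with barriers $L+\tilde X^2$ and $\tilde X^1-U$, and an induction with Proposition \ref{proposition2.2} gives $Y^{1,n}\le\tilde X^1$, $Y^{2,n}\le\tilde X^2$ for all $n$. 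Second, the passage to the limit. There is no ``stability result with moving barriers'' in \cite{KRzS} to invoke, and the point such a result would have to settle is precisely the delicate one: that the limit $K^1$ satisfies the minimality condition relative to the limit barrier $L+Y^2$ (and $K^2$ relative to $Y^1-U$). The paper obtains this by a different mechanism: the Snell-envelope representation \cite[Proposition 3.13]{KRzS} of each iterate, localization by the chain $\tau_k$ built from $\int_0^{\cdot}(|f(r,Y^{1,0}_r-\tilde X^2_r)|+|f(r,\tilde X^1_r)|)\,dr$, passage to the limit in the $\esssup$ via \cite[Lemma 3.19]{KRzS} and standard Snell-envelope properties, and then the Mertens decomposition together with \cite[Corollary 3.11]{KRzS} to produce $K^1,Z^1$ and the minimality condition in the limit. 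Until these two constructions (or equivalents) are supplied, the existence half of your argument is not complete.
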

\begin{proof}

Without loss of generality we may assume that $\mu=0$.  Let
$(Y^{1,0},Z^{1,0})$ be a solution of BSDE($\xi$,$f+dV$) such that
if $p>1$, $Y^{1,0}\in\mathcal{S}^p$, $Z^{1,0}\in\mathcal{H}^p$ and
if $p=1$, $Y^{1,0}$ is of class (D), $Z^{1,0}\in\mathcal{H}^q$,
$q\in(0,1)$. Set $(Y^{2,0},Z^{2,0})=(0,0)$. Moreover,
for each $n\ge 1$ let $(Y^{1,n},Z^{1,n},K^{1,n})$ be a solution of
\underline{R}BSDE($\xi$,$f_n+dV$,$L+Y^{2,n-1}$) with
\[
f_n(r,y)=f(r,y-Y^{2,n-1}_r),
\]
and let $(Y^{2,n}, Z^{2,n},K^{2,n})$ be a solution of \underline{R}BSDE($0$,$0$,$Y^{1,n-1}-U$) such that if $p>1$, $Y^{1,n},Y^{2,n}\in\mathcal{S}^p$, $Z^{1,n},Z^{2,n}\in\mathcal{H}^p$, $K^{1,n},K^{2,n}\in\mathcal{V}^{+,p}$, and if $p=1$, then $Y^{1,n},Y^{2,n}$ are of class (D), $Z^{1,n},Z^{2,n}\in\mathcal{H}^q$, $q\in(0,1)$, $K^{1,n},K^{2,n}\in\mathcal{V}^{+,1}$. For each $n\ge 0$ the existence of the above solutions follows from \cite[Theorem 3.20]{KRzS}. In both cases ($p>1$, $p=1$), by Proposition \ref{prop2.5}, we have
\begin{equation}\label{th1.0.0}
E\Big(\int^T_0|f(r,Y^{1,n}_r-Y^{2,n-1}_r)|\,dr\Big)^p<\infty.
\end{equation}
The rest of the proof we divide into 4 steps.

{\bf Step 1.} We show that the sequences $(Y^{1,n})_{n\ge 0}$, $(Y^{2,n})_{n\ge 0}$ are increasing. We proceed by induction. Clearly  $Y^{1,1}\ge Y^{1,0}$ and $Y^{2,1}\ge Y^{2,0}$. Suppose that  $Y^{1,n}\ge Y^{1,n-1}$ and $Y^{2,n}\ge Y^{2,n-1}$. Using (H2) we show  that $f_{n+1}\ge f_n$ and $L+Y^{2,n}\ge L+Y^{2,n-1}$. Hence, by Proposition \ref{proposition2.2} and Remark \ref{uwaga2.3}, $Y^{1,n+1}\ge Y^{1,n}$. By a similar argument, $Y^{2,n+1}\ge Y^{2,n}$, so $(Y^{1,n})_{n\ge 0}$, $(Y^{2,n})_{n\ge 0}$ are increasing.

{\bf Step 2.} Let $Y^1:=\sup_{n\ge 1}Y^{1,n}$, $Y^2:=\sup_{n\ge 1}Y^{2,n}$. We show that $Y^1,Y^2\in \mathcal{S}^p$ if $p>1$, and if $p=1$, then $Y^1,Y^2$ are of class (D). Let $p>1$. By (H6), there exists a
process $X\in(\mathcal{M}_{loc} +\mathcal{V}^p)\cap\mathcal{S}^p$ such that $X \ge L$ and $\int^T_0 f^-(r,X_r, 0)\,dr\in \mathbb{L}^p$. If $p = 1$,
then by (H6*), there exists $X$ of class (D) such that $X\in \mathcal{M}_{loc}+\mathcal{V}^1$, $X \ge L$ and $\int^T_0 f^-(r,X_r, 0)\,dr\in \mathbb{L}^1$. Since the Brownian filtration has the representation property, there exist processes $H\in\mathcal{M}_{loc}$ and $C\in\mathcal{V}^p$ such that
\[
X_t=X_T-\int^T_t\,dC_r-\int^T_t H_r\,dB_r,\quad t\in[0,T].
\]
This equality can be written in the form
\[
\tilde{X}_t=\xi+\int^T_t f(r,\tilde{X}_r)\,dr+\int^T_t\,dV_r+\int^T_t\,dC'_r-\int^T_t H_r\,dB_r,
\]
where $C'$ is some process in $\mathcal{V}^p$, $\tilde{X}_t=X_t$, $t\in[0,T)$, $\tilde{X}_T=\xi$.
Let $(\tilde{X}^1,\tilde{H}^1)$ be a solution of the following BSDE
\[
\tilde{X}^1_t=\xi+\int^T_t f(r,\tilde{X}_r)\,dr+\int^T_t\,dV_r+\int^T_t\,dC'^+_r-\int^T_t \tilde{H}^1_r\,dB_r,\quad t\in[0,T],
\]
and $(\tilde{X}^2,\tilde{H}^2)$ be a solution of the BSDE
\[
\tilde{X}^2_t=\int^T_t\,dC'^-_r-\int^T_t \tilde{H}^2_r\,dB_r,\quad t\in[0,T],
\]
such that if $p>1$, then $\tilde{X}^1,\tilde{X}^2\in\mathcal{S}^p$, $\tilde{H}^1,\tilde{H}^2\in\mathcal{H}^p$, and if $p=1$, then  $\tilde{X}^1,\tilde{X}^2$ are of class (D), $\tilde{H}^1,\tilde{H}^2\in\mathcal{H}^q$, $q\in(0,1)$. The existence of such  solutions follows from \cite[Theorem 3.20]{KRzS}. Let us note that $\tilde{X}=\tilde{X}^1-\tilde{X}^2$. It is easy to see that $(\tilde{X}^1,\tilde{H}^1,0)$ is a solution of \underline{R}BSDE($\xi$, $\tilde{f}+dV+dC'^+$, $L+\tilde{X}^2$) with $\tilde{f}(r,x)=f(r,x-\tilde{X}^2_r)$ and $(\tilde{X}^2,\tilde{H}^2,0)$ is a solution of \underline{R}BSDE($0$, $dC'^-$, $\tilde{X}^1-U$). Proceeding by induction we will show that for each $n\in\mathbb{N}$, $\tilde{X}^1\ge Y^{1,n}$ and $\tilde{X}^2\ge Y^{2,n}$. For $n=0$, since $\tilde{X}^2\ge 0$, using (H2) we get  $\tilde{f}\ge f$. Hence, by Proposition \ref{proposition2.2} and Remark \ref{uwaga2.3}, $\tilde{X}^1\ge Y^{1,0}$. It is clear that
$\tilde{X}^2\ge Y^{2,0}$ since $Y^{2,0}=0$. Suppose that $\tilde{X}^1\ge Y^{1,n}$ and $\tilde{X}^2\ge Y^{2,n}$. Using (H2) we show that $\tilde{f}\ge f_{n+1}$, $L+\tilde{X}^2\ge L+Y^{2,n}$, $Y^{1,n}-U\le\tilde{X}-U$. Hence by Proposition \ref{proposition2.2} and Remark \ref{uwaga2.3}, $\tilde{X}^2\ge Y^{1,n+1}$, $\tilde{X}^2\ge Y^{2,n+1}$, so for each $n\in\mathbb{N}$, $\tilde{X}^1\ge Y^{1,n}$ and $\tilde{X}^2\ge Y^{2,n}$. We have
\begin{equation}\label{th1.0}
Y^{1,0}\le Y^{1,n}\le \tilde{X}^1,\quad Y^{2,0}\le Y^{2,n}\le \tilde{X}^2.
\end{equation}
Therefore $Y^1,Y^2\in \mathcal{S}^p$ for $p>1$, and if $p=1$, then $Y^1,Y^2$ are of class (D).

{\bf Step 3.} We will show that there exist $Z^1,Z^2\in\mathcal{H}^p$, $K^1,K^2\in\mathcal{V}^p$ if $p>1$,  and $Z^1,Z^2\in\mathcal{H}^q$, $q\in(0,1)$, $K^1,K^2\in\mathcal{V}^1$ if $p=1$ such that $(Y^1,Z^1,K^1)$ is a solution of \underline{R}BSDE($\xi$,$\hat{f}+dV$,$L+Y^2$) with $\hat{f}(r,y)=f(r,y-Y^2_r)$
and $(Y^2,Z^2,K^2)$ is a solution of \underline{R}BSDE($0$,$0$,$Y^1-U$). By (H4), $f(r,Y^{1,n}_r-Y^{2,n-1}_r)\rightarrow f(r,Y^1_r-Y^2_r)$ as $n\rightarrow\infty$. Furthermore, by (H2) and (\ref{th1.0}),
\begin{equation}\label{th1.1}
f(r,X^1_r)\le f(r,Y^{1,n}_r-Y^{2,n-1}_r)\le f(r,Y^{1,0}_r-X^2_r).
\end{equation}
Hence,  by (H2), (H5) and the Lebesgue dominated convergence theorem,
\begin{equation}
\int^T_0 |f(r,Y^{1,n}_r-Y^{2,n-1}_r)\,dr-f(r,Y^1_r-Y^2_r)|\,dr\rightarrow 0.
\end{equation}
Observe that
\begin{align*}
S^n_t:&=Y^{1,n}_t+\int^t_0 f(r,Y^{1,n}_r-Y^{2,n-1}_r)\,dr+V_t\\
&\ge L_t+Y^{2,n-1}_t+\int^t_0 f(r,Y^{1,n}_r-Y^{2,n-1}_r)\,dr+V_t=:\bar{L}^n_t,\quad t \in[0,T],
\end{align*}
and that (\ref{th1.0.0}) implies that $S^n$ is a supermartingale of class (D) on $[0,T]$. Letting $n\rightarrow\infty$ in the above inequality we get
\[
S_t:=Y^1_t+\int^t_0 f(r,Y^1_r-Y^2_r)\,dr+V_t\ge L_t+Y^2_t+\int^t_0 f(r,Y^1_r-Y^2_r)\,dr+V_t=:\bar{L}_t,\quad t\in[0,T].
\]
Set $\tau_k=\inf\{t\ge 0;\,\int^t_0|f(r,Y^{1,0}_r-X^2_r)|+|f(r,X^1_r)|\,dr\ge k\}\wedge T$, $k\in\N$.
By (H5), $\{\tau_k\}$ is a chain. From the definition $\{\tau_k\}$, and (\ref{th1.0}), (\ref{th1.1}) it follows that  $S$ is a supermartingale of class (D) on $[0,\tau_k]$, $k\ge 0$. It is clear that $S$ majorizes $\bar{L}$, so for $\sigma\in\Gamma$ we have
\[
S_{\sigma\wedge\tau_k}\ge\esssup_{\tau\in\Gamma_{\sigma\wedge\tau_k}}E(\bar{L}_{\tau\wedge\tau_k}|\mathcal{F}_{\sigma\wedge\tau_k}).
\]
Hence
\begin{align}
\nonumber\label{tw1.1}
Y^1_{\sigma\wedge\tau_k}&\ge\esssup_{\tau\in\Gamma_{\sigma\wedge\tau_k}}E\Big(\int^{\tau\wedge\tau_k}_{\sigma\wedge\tau_k} f(r,Y^1_r-Y^2_r)\,dr+\int^{\tau\wedge\tau_k}_{\sigma\wedge\tau_k}\,dV_r+(L_{\tau}+Y^2_{\tau})\mathbf{1}_{\{\tau<\tau_k\}}\\
&\qquad\qquad\qquad\qquad+Y^1_{\tau_k}\mathbf{1}_{\{\tau=\tau_k\}}|\mathcal{F}_{\sigma\wedge\tau_k}\Big).
\end{align}
To show the opposite inequality, we first note that the triple  $(Y^{1,n},Z^{1,n},K^{1,n})$ is a solution of \underline{R}BSDE($Y^{1,n}_{\tau_k}$,$f_n+dV$,$L+Y^{2,n-1}$) on $[0,\tau_k]$, so by \cite[Proposition 3.13]{KRzS},  for $\sigma\in\Gamma$ we have
\begin{align}\label{tw1.1.1}
\nonumber
Y^{1,n}_{\sigma\wedge\tau_k}&=\esssup_{\tau\in\Gamma_{\sigma\wedge\tau_k}}E\Big(\int^{\tau\wedge\tau_k}_{\sigma\wedge\tau_k} f(r,Y^{1,n}_r-Y^{2,n-1}_r)\,dr+\int^{\tau\wedge\tau_k}_{\sigma\wedge\tau_k}\,dV_r\\\nonumber&\qquad\qquad\qquad+(L_{\tau}+Y^{2,n-1}_{\tau})\mathbf{1}_{\{\tau<\tau_k\}}
+Y^{1,n}_{\tau_k}\mathbf{1}_{\{\tau=\tau_k\}}|\mathcal{F}_{\sigma\wedge\tau_k}\Big)\\\nonumber&\quad \le\esssup_{\tau\in\Gamma_{\sigma\wedge\tau_k}}E\Big(\int^{\tau\wedge\tau_k}_{\sigma\wedge\tau_k} f(r,Y^{1,n}_r-Y^{2,n-1}_r)\,dr+\int^{\tau\wedge\tau_k}_{\sigma\wedge\tau_k}\,dV_r\\
&\qquad\qquad\qquad\qquad+(L_{\tau}+Y^2_{\tau})\mathbf{1}_{\{\tau<\tau_k\}}+Y^1_{\tau_k}\mathbf{1}_{\{\tau=\tau_k\}}|\mathcal{F}_{\sigma\wedge\tau_k}\Big).
\end{align}
By the definition of $\tau_k$ and (\ref{th1.1}),
\begin{equation}\label{tw1.1.2}
E\int^{\tau_k}_0|f(r,Y^{1,n}_r-Y^{2,n-1}_r)-f(r,Y^1_r-Y^2_r)|\,dr\rightarrow 0.
\end{equation}
By (\ref{tw1.1.1}), (\ref{tw1.1.2}) and \cite[Lemma 3.19]{KRzS},
\begin{equation*}
Y^1_{\sigma\wedge\tau_k}\le\esssup_{\tau\in\Gamma_{\sigma\wedge\tau_k}}E\Big(\int^{\tau\wedge\tau_k}_{\sigma\wedge\tau_k} f(r,Y^1_r-Y^2_r)\,dr+\int^{\tau\wedge\tau_k}_{\sigma\wedge\tau_k}\,dV_r+(L_{\tau}+Y^2_{\tau})\mathbf{1}_{\{\tau<\tau_k\}}
+Y^1_{\tau_k}\mathbf{1}_{\{\tau=\tau_k\}}|\mathcal{F}_{\sigma\wedge\tau_k}\Big).
\end{equation*}
By the above inequality and (\ref{tw1.1}),
\begin{align}
\nonumber
Y^1_{\sigma\wedge\tau_k}&=\esssup_{\tau\in\Gamma_{\sigma\wedge\tau_k}}E\Big(\int^{\tau\wedge\tau_k}_{\sigma\wedge\tau_k} f(r,Y^1_r-Y^2_r)\,dr+\int^{\tau\wedge\tau_k}_{\sigma\wedge\tau_k}\,dV_r+(L_{\tau}+Y^2_{\tau})\mathbf{1}_{\{\tau<\tau_k\}}\\
&\qquad\qquad\qquad+Y^1_{\tau_k}\mathbf{1}_{\{\tau=\tau_k\}}|\mathcal{F}_{\sigma\wedge\tau_k}\Big).
\end{align}
On the other hand, by \cite[Proposition 3.13]{KRzS}, for every $\sigma\in\Gamma$,
\[
Y^{2,n}_{\sigma}=\esssup_{\tau\in\Gamma_{\sigma}}E\Big((Y^{1,n-1}_{\tau}-U_{\tau})\mathbf{1}_{\{\tau<T\}}|\mathcal{F}_{\sigma}\Big).
\]
Since $\{Y^{1,n}\},\{Y^{2,n}\}$ are nondecreasing, letting $n\rightarrow\infty$ and using  standard properties of the Snell envelope we obtain
\[
Y^2_{\sigma}=\esssup_{\tau\in\Gamma_{\sigma}}E\Big((Y^1_{\tau}-U_{\tau})\mathbf{1}_{\{\tau<T\}}|\mathcal{F}_{\sigma}\Big).
\]
We have showed that $S$ is a supermartingale on $[0,\tau_k]$, so by the Mertens decomposition there exist $K^{1,k}\in\mathcal{V}^{1,+},Z^{1,k}\in\mathcal{H}$ such that
\[
Y^1_t=Y^1_{\tau_k}+\int^{\tau_k}_t f(r,Y^1_r-Y^2_r)\,dr+\int^{\tau_k}_t\,dV_r+\int^{\tau_k}_t\,dK^{1,k}_r-\int^{\tau_k}_t Z^{1,k}_r\,dB_r,\quad t\in[0,\tau_k].
\]
By  \cite[Corollary 3.11]{KRzS},
\[
\int^{\tau_k}_0(Y^1_{r-}-\limsup_{s\uparrow r}(L_s+Y^2_s))\,dK^{1,k,*}_r
+\sum_{r<\tau_k}(Y^1_r-(L_r+Y^2))\Delta^+K^{1,k}_r=0.
\]
Therefore $(Y^1,Z^{1,k},K^{1,k})$ is a solution of \underline{R}BSDE($Y^1_{\tau_k},\hat{f}+dV,L+Y^2$) on $[0,\tau_k]$. By uniqueness, $K^{1,k}_t=K^{1,k+1}_t$ and  $Z^{1,k}_t=Z^{1,k+1}_t$ for $t\in[0,\tau_k]$, so using the fact that $\{\tau_k\}$ is a chain we can define processes $Z^1$ and $K^1$ on $[0,T]$ by putting $Z^1_t=Z^{1,k}_t$, $K^1_t=K^{1,k}_t$, $t\in[0,\tau_k]$. We see that $(Y^1,Z^1,K^1)$ is a solution of \underline{R}BSDE($\xi$,$\hat{f}+dV$,$L+Y^2$) on $[0,T]$. By \cite{EK}, $Y^2$ is a supermartingale, so by the Mertens decomposition, there exist $K^2\in\mathcal{V}^{1,+},Z^2\in\mathcal{H}$ such that
\[
Y^2_t=\int^T_t\,dK^2_r-\int^T_t Z^2_r\,dB_r,\quad t\in[0,T],
\]
and by \cite[Corollary 3.11]{KRzS},
\[
\int^T_0(Y^2_{r-}-\limsup_{s\uparrow r}(Y^1_s-U_s))\,dK^{2,*}_r
+\sum_{r<T}(Y^2_r-(Y^1_s-U_s))\Delta^+K^2_r=0.
\]
Therefore  $(Y^2,Z^2,K^2)$ is a solution of \underline{R}BSDE($0$,$0$,$Y^1-U$) on $[0,T]$. By \cite[Theorem 3.20]{KRzS} and Remark \ref{uwaga2.3}, $Z^1,Z^2\in\mathcal{H}^p$, $K^1,K^2\in\mathcal{S}^p$ if $p\ge 1$, and $Z^1,Z^2\in\mathcal{H}^q$, $q\in(0,1)$ and $K^1,K^2\in\mathcal{V}^{1,+}$ if $p=1$.

{\bf Step 4.} Write $Y:=Y^1-Y^2$, $Z:=Z^1-Z^2$, $R:=K^1-K^2$. We will show that $(Y,Z,R)$ is a solution of RBSDE($\xi$,$f+dV$,$L$,$U$). We have
\[
Y_t=\xi+\int^T_t f(r,Y_r)\,dr+\int^T_t\,dV_r+\int^T_t\,dR_r-\int^T_t Z_r\,dB_r,\quad t\in[0,T].
\]
Obviously  $L\le Y\le U$. The process $R$ satisfies the minimality condition because
\begin{align*}
\int^T_0 (&Y_{r-}-\limsup_{s\uparrow r} L_s)\,dR^{+,*}_r+\sum_{r<T}(Y_r-L_r)\Delta^+R^+_r\\ &\le \int^T_0 (Y^1_{r-}-\limsup_{s\uparrow r} (L_s+Y^2_s))\,dK^{1,*}_r
+\sum_{r<T}(Y^1_r-(L_r+Y^2_r))\Delta^+K^1_r=0
\end{align*}
and
\begin{align*}
\int^T_0 (&\liminf_{s\uparrow r} U_s-Y_{r-})\,dR^{-,*}_r+\sum_{r<T}(U_r-Y_r)\Delta^+R^-_r\\ &\le \int^T_0 (Y^2_{r-}-\limsup_{s\uparrow r}(Y^1_s-U_s))\,dK^{2,*}_r
+\sum_{r<T}(Y^2_r-(Y^1_r-U_r))\Delta^+K^2_r=0.
\end{align*}
The desired integrability of $Y$, $Z$ and $R$ follows from  Step 2 and Step 3. Furthermore,
\[
E\Big(\int^T_0|f(r,Y_r)|\,dr\Big)^p<\infty,\quad p\ge 1.
\]
Indeed, by Proposition \ref{proposition2.2} and Remark \ref{uwaga2.3},  $\underline{Y}\le Y\le \overline{Y}$, where $\underline{Y}$ is the first component of a solution of $\overline{\textnormal{R}}$BSDE($\xi$,$f+dV$,$U$) and $\overline{Y}$ is the first component of a solution of \underline{R}BSDE($\xi$,$f+dV$,$L$). By this and (H2),
$f(r,\overline{Y}_r)\le f(r,Y_r)\le f(r,\underline{Y}_r).$
Since by \cite[Theorem 4.1]{KRzS},
\[
E\Big(\int^T_0|f(r,\underline{Y}_r)|\,dr\Big)^p
+\Big(\int^T_0|f(r,\overline{Y}_r)|\,dr\Big)^p<\infty,\quad p\ge 1,
\]
the proof is complete.
\end{proof}

We close this subsection with  estimates for the difference of solutions of RBSDEs with  generators not  depending on $z$. We will use them in the next subsection to study the existence of solutions of general RBSDEs.

\begin{proposition}\label{prop2}
Let $(Y^i,Z^i,R^i)$ be  solutions of \textnormal{RBSDE}$(\xi,f^i+dV,L,U)$, $i=1,2$, where $f^1,f^2$ do not depend on $z$. Assume that $f^1$ satisfies \textnormal{(H2)}. If $(Y^1-Y^2)\in\mathcal{S}^p$ and $\int^T_0|f^1(r,Y^2_r)-f^2(r,Y^2_r)|\,dr\in\mathbb{L}^p$ for some $p>1$, then $(Z^1-Z^2)\in\mathcal{H}^p$ and there exists a constant $C_p$ depending only on $p$ such  that
\begin{align*}
&E\Big\{\sup_{t\le T}|Y^1_t-Y^2_t|^p+\Big(\int^T_0 |Z^1_r-Z^2_r|\,dr\Big)^{p/2}\Big\}
\le C_pE\Big(\int^T_0 |f^1(r,Y^2_r)-f^2(r,Y^2_r)|\,dr\Big)^p.
\end{align*}
\end{proposition}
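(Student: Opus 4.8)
The plan is to apply the Itô–Tanaka/Meyer formula to the process $(Y^1-Y^2)$, using the fact that the reflection terms have a favorable sign once we localize to the set where $Y^1>Y^2$ or $Y^1<Y^2$, exactly as in the proof of Proposition \ref{proposition2.2}, but now keeping track of the full difference (not only the positive part) and of the $Z$-term. Write $\bar Y=Y^1-Y^2$, $\bar Z=Z^1-Z^2$, $\bar R=R^1-R^2$, and $h_r=f^1(r,Y^2_r)-f^2(r,Y^2_r)$. Since the generators do not depend on $z$, we have $f^1(r,Y^1_r)-f^2(r,Y^2_r)=(f^1(r,Y^1_r)-f^1(r,Y^2_r))+h_r$, and by (H2) the first term has the sign of $\bar Y_r$ up to the factor $\mu$; after the standard reduction we may take $\mu=0$, so $\bar Y_r(f^1(r,Y^1_r)-f^1(r,Y^2_r))\le 0$.

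First I would record that for $\sigma,\tau\in\Gamma$ with $\sigma\le\tau$, by \cite[Corollary A.5]{KRzS} applied to $|\bar Y|^p$ (and the elementary inequality $\operatorname{sgn}(\bar Y_r)(f^1(r,Y^1_r)-f^1(r,Y^2_r))\le 0$, $\operatorname{sgn}(\bar Y_r)h_r\le |h_r|$),
\begin{align*}
|\bar Y_\sigma|^p+\frac{p(p-1)}{2}\int_\sigma^\tau |\bar Y_r|^{p-2}\mathbf{1}_{\{\bar Y_r\ne 0\}}|\bar Z_r|^2\,dr
&\le |\bar Y_\tau|^p+p\int_\sigma^\tau |\bar Y_r|^{p-1}|h_r|\,dr\\
&\quad-p\int_\sigma^\tau |\bar Y_r|^{p-1}\operatorname{sgn}(\bar Y_{r})\bar Z_r\,dB_r,
\end{align*}
where the two reflection contributions $p\int_\sigma^\tau|\bar Y_{r-}|^{p-1}\operatorname{sgn}(\bar Y_{r-})\,d\bar R^*_r+p\sum_{\sigma\le r<\tau}|\bar Y_r|^{p-1}\operatorname{sgn}(\bar Y_r)\Delta^+\bar R_r$ are $\le 0$ by the same argument as in (\ref{min2.2.1})--(\ref{min2.2.2}): on $\{Y^1_{r-}>Y^2_{r-}\}$ we have $dR^{1,*,+}=0$ and $dR^{2,*,-}=0$ since $Y^1_{r-}>L_{r-}$ and $Y^2_{r-}<U_{r-}$ there, and symmetrically on $\{Y^1_r<Y^2_r\}$; this uses that the barriers $L,U$ are common to both equations. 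Taking $\tau=T$ (where $\bar Y_T=\xi-\xi=0$), localizing the stochastic integral by a chain $\{\tau_k\}$, taking expectations and letting $k\to\infty$ via Fatou on the left and dominated/monotone convergence on the right (legitimate because $\bar Y\in\mathcal S^p$ and $\int_0^T|h_r|\,dr\in\mathbb L^p$), one gets $\sup_{\sigma}E|\bar Y_\sigma|^p<\infty$ and, after a Young-inequality splitting $p|\bar Y_r|^{p-1}|h_r|\le \varepsilon|\bar Y_r|^p+C_{p,\varepsilon}|h_r|^p$ plus a Gronwall argument, control of $E|\bar Y_\sigma|^p$ by $C_pE(\int_0^T|h_r|\,dr)^p$; the Doob-type passage to $E\sup_t$ then gives the $\sup$-bound.

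Next I would obtain the bound on $\bar Z$. Returning to the displayed inequality with $p$ replaced by $2$ (or, if $1<p<2$, using the $p=2$ version of \cite[Corollary A.5]{KRzS} on $|\bar Y|^2$ together with the now-established finiteness of $E\sup_t|\bar Y_t|^p$ and the stopping-time/BDG technique of Briand–Delyon–Hu–Pardoux–Stoica type), one has
\begin{equation*}
\int_\sigma^{\tau_k}|\bar Z_r|^2\,dr\le |\bar Y_{\tau_k}|^2+2\int_0^{\tau_k}|\bar Y_r||h_r|\,dr-2\int_\sigma^{\tau_k}\bar Y_r\bar Z_r\,dB_r,
\end{equation*}
so that $(\int_0^T|\bar Z_r|^2\,dr)^{p/2}\le C\big(\sup_t|\bar Y_t|^p+(\int_0^T|\bar Y_r||h_r|\,dr)^{p/2}+\sup_k|\int_0^{\tau_k}\bar Y_r\bar Z_r\,dB_r|^{p/2}\big)$. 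Taking expectations, bounding the martingale term by the Burkholder–Davis–Gundy inequality with $\big(\int_0^T|\bar Y_r|^2|\bar Z_r|^2\,dr\big)^{p/4}\le \sup_t|\bar Y_t|^{p/2}\big(\int_0^T|\bar Z_r|^2\,dr\big)^{p/4}$ and absorbing this into the left-hand side via Young's inequality (the absorption is legitimate once we know $E(\int_0^T|\bar Z_r|^2\,dr)^{p/2}<\infty$, which follows first on each $[0,\tau_k]$ and then by monotone convergence), and using the $\sup$-bound from the previous step together with $E(\int_0^T|\bar Y_r||h_r|\,dr)^{p/2}\le E\sup_t|\bar Y_t|^{p/2}(\int_0^T|h_r|\,dr)^{p/2}\le \tfrac12 E\sup_t|\bar Y_t|^p+\tfrac12 E(\int_0^T|h_r|\,dr)^p$, yields the asserted estimate and in particular $\bar Z\in\mathcal H^p$.

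The main obstacle I anticipate is purely technical: the localization/absorption bookkeeping, i.e. justifying the a priori finiteness needed to absorb the BDG term on the left, and handling the $1<p<2$ case where $|\bar Y|^{p-2}$ is not bounded near the zero set of $\bar Y$ — this is where one must be careful to use the correct version of the change-of-variables formula and the standard trick of working with $(|\bar Y|^2+\delta)^{p/2}$ and letting $\delta\downarrow 0$, or else invoke the refined estimates from \cite{KRzS} directly. The sign of the reflection terms, by contrast, is immediate from the minimality conditions since $L,U,V$ are shared by the two solutions, so that part presents no difficulty.
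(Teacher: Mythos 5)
Your argument for the $Z$-estimate coincides with the paper's: the same squared Tanaka-type inequality from \cite[Corollary A.5]{KRzS} on $[0,\tau_k]$ with $\tau_k=\inf\{t:\int_0^t|\bar Z_r|^2\,dr\ge k\}\wedge T$, the same sign argument for the reflection terms via the shared barriers and the minimality conditions, and the same BDG--Young absorption followed by Fatou as $k\to\infty$. Where you diverge is the $\sup_t|Y^1_t-Y^2_t|^p$ bound. The paper does \emph{not} work with $|\bar Y|^p$; it applies the first-power (Tanaka) version of \cite[Corollary A.5]{KRzS} to $|\bar Y|$, kills the reflection terms by the same minimality argument, and obtains directly $|\bar Y_t|\le E\big(\int_0^T|h_r|\,dr\,\big|\,\mathcal F_t\big)$, after which Doob's $L^p$ maximal inequality applied to this closed martingale gives $E\sup_t|\bar Y_t|^p\le C_pE\big(\int_0^T|h_r|\,dr\big)^p$ in one line. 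Your route through the $p$-th power It\^o formula, Young, Gronwall and a ``Doob-type passage'' can be made to work (it is the Briand--Delyon--Hu--Pardoux--Stoica scheme you allude to), but it is heavier and your sketch is under-specified at exactly the delicate point: a bound on $\sup_\sigma E|\bar Y_\sigma|^p$ over stopping times does not by itself yield $E\sup_t|\bar Y_t|^p$; one must take the supremum inside the $p$-th power inequality, control the martingale term by BDG using $\big(\int_0^T|\bar Y_r|^{2(p-1)}|\bar Z_r|^2\,dr\big)^{1/2}\le\sup_t|\bar Y_t|^{p/2}\big(\int_0^T|\bar Y_r|^{p-2}\mathbf 1_{\{\bar Y_r\ne0\}}|\bar Z_r|^2\,dr\big)^{1/2}$ and absorb against the quadratic term already present on the left, and also handle the singularity of $|\bar Y|^{p-2}$ for $1<p<2$ (which you do flag). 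The paper's linearization avoids all of this; if you adopt it, the only place where the exponent $p$ enters is Doob's inequality, and the circularity between the $Y$- and $Z$-estimates disappears because the $\sup$-bound is established first and independently.
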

\begin{proof} Without loss of generality we may assume that $\mu\le 0$.
We know that
\begin{align*}
Y^1_t-Y^2_t&=\int^T_t \hat{f}(r,Y^1_r-Y^2_r)\,dr+\int^T_t\,d(R^1_r-R^2_r)-\int^T_t(Z^1_r-Z^2_r)\,dB_r,\quad t\in[0,T],
\end{align*}
where $\hat{f}(r,y)=f^1(r,y+Y^2_r)-f^2(r,Y^2_r)$.
Define $\tau_k=\inf\left\{t\in[0,T]:\int^t_0 |Z^1_r-Z^2_r|^2\,dr\ge k\right\}\wedge T$, $k\in\N$. By \cite[Corollary A.5]{KRzS} 
\begin{align}\label{lem1.0.0}
|Y^1_t-Y^2_t|^2&+\int^{\tau_k}_t|Z^1_r-Z^2_r|^2\,dr\le|Y^1_{\tau_k}-Y^2_{\tau_k}|^2
+2\int^{\tau_k}_t(Y^1_r-Y^2_r)\hat{f}(r,Y^1_r-Y^2_r)\,dr\nonumber
\\\nonumber
&\quad+2\int^{\tau_k}_t(Y^1_{r-}-Y^2_{r-})\,d(R^1_r-R^2_r)^*
+2\sum_{t\le r<\tau_k}(Y^1_r-Y^2_r)\Delta^+(R^1_r-R^2_r)
\\&\quad
-2\int^{\tau_k}_t(Y^1_r-Y^2_r)(Z^1_r-Z^2_r)\,dB_r
\end{align}
By the  minimality condition and the fact that  $U\ge Y^1\ge L$ and $U\ge Y^2\ge L$,
\begin{align}\label{lem1.1}\nonumber
&\int^{\tau_k}_t(Y^1_{r-}-Y^2_{r-})\,d(R^1_r-R^2_r)^*+\sum_{t\le r<\tau_k}(Y^1_r-Y^2_r)\Delta^+(R^1_r-R^2_r)\\\nonumber
&\quad\le\int^{\tau_k}_t(Y^1_{r-}-\limsup_{s\uparrow r}L_s)\,dR^{1,*,+}_r+\int^{\tau_k}_t(Y^2_{r-}-\limsup_{s\uparrow r}L_s)\,dR^{2,*,+}_r\\
&\nonumber\quad\quad+\sum_{t\le r<\tau_k}(Y^1_r-L_r)\Delta^+R^{1,+}_r+\sum_{t\le r<\tau_k}(Y^2_r-L_r)\Delta^+R^{2,+}_r
\\\nonumber
&\quad\quad+\int^{\tau_k}_t(\liminf_{s\uparrow r}U_s-Y^1_{r-})\,dR^{1,*,-}_r+\int^{\tau_k}_t(\liminf_{s\uparrow r}U_s-Y^2_{r-})\,dR^{2,*,-}_r\\
&\quad\quad+\sum_{t\le r<\tau_k}(U_r-Y^1_r)\Delta^+R^{1,-}_r+\sum_{t\le r<\tau_k}(U_r-Y^2_r)\Delta^+R^{2,-}_r
\le 0.
\end{align}
Since $\mu\le0$, using (H2) we get
\begin{equation}\label{lem1.2}
2y\hat{f}(r,y)\le2\mu|y|^2+2|y||\hat{f}(r,0)|\le 2|y||\hat{f}(r,0)|=2|y||f^1(r,Y^2_r)-f^2(r,Y^2_r)|.
\end{equation}
By (\ref{lem1.0.0})--(\ref{lem1.2}),
\begin{align*}
\int^{\tau_k}_0|Z^1_r-Z^2_r|^2\,dr&\le\sup_{t\le T}|Y^1_t-Y^2_t|^2+2\int^T_0|Y^1_r-Y^2_r||f^1(r,Y^2_r)-f^2(r,Y^2_r)|\,dr\\
&\quad+2\Big|\int^{\tau_k}_0(Y^1_r-Y^2_r)(Z^1_r-Z^2_r)\,dB_r\Big|.
\end{align*}
Since
\begin{align*}
&2\int^T_0|Y^1_r-Y^2_r||f^1(r,Y^2_r)-f^2(r,Y^2_r)|\,dr\\
&\qquad\le 2\sup_{t\le T}|Y^1_t-Y^2_t|\int^T_0|f^1(r,Y^2_r)-f^2(r,Y^2_r)|\,dr\\
&\qquad\le\sup_{t\le T}|Y^1_t-Y^2_t|^2+\Big(\int^T_0|f^1(r,Y^2_r)-f^2(r,Y^2_r)|\,dr\Big)^2,
\end{align*}
it follows from the above that
\begin{align}\label{lem1.3}
\nonumber
\Big(\int^{\tau_k}_0|Z^1_r-Z^2_r|^2\,dr\bigg)^{p/2}&\le C_p\Big(\sup_{t\le T}|Y^1_t-Y^2_t|^p+\bigg(\int^T_0|f^1(r,Y^2_r)-f^2(r,Y^2_r)|\,dr\Big)^p\\
&\quad\qquad\qquad+\Big|\int^{\tau_k}_0(Y^1_r-Y^2_r)(Z^1_r-Z^2_r)\,dB_r\Big|^{p/2}\Big).
\end{align}
By the Burkholder-Davis-Gundy inequality,
\begin{align*}
E&\Big|\int^{\tau_k}_0(Y^1_r-Y^2_r)(Z^1_r-Z^2_r)\,dB_r\Big|^{p/2}\le D_pE\Big(\int^{\tau_k}_0|Y^1_r-Y^2_r|^2|Z^1_r-Z^2_r|^2\,dr\Big)^{p/4}\\
&\qquad\qquad\qquad\le D_pE\Big[\sup_{t\le T}|Y^1_r-Y^2_r|^{p/2}\Big(\int^{\tau_k}_0|Z^1_r-Z^2_r|^2\,dr\Big)^{p/4}\Big].
\end{align*}
Hence
\[
E\Big|\int^{\tau_k}_0(Y^1_r-Y^2_r)(Z^1_r-Z^2_r)\,dB_r\Big|^{p/2}
\le \frac{D_p^2}{2}E\sup_{t\le T}|Y^1_r-Y^2_r|^p
+\frac{1}{2}E\Big(\int^{\tau_k}_0|Z^1_r-Z^2_r|^2\,dr\Big)^{p/2}.
\]
From the above and (\ref{lem1.3}), for every $k\ge 1$,
\[
E\Big(\int^{\tau_k}_0|Z^1_r-Z^2_r|^2\,dr\Big)^{p/2}\le C_pE\Big(\sup_{t\le T} |Y^1_t-Y^2_t|^p+\Big(\int^T_0|f^1(r,Y^2_r)-f^2(r,Y^2_r)|\,dr\Big)^{p}\Big)
\]
Letting $k\rightarrow\infty$ and applying  Fatou's lemma yields
\begin{align}
\label{eq3.0}
\nonumber
E\Big(\int^T_0 |Z^1_r-Z^2_r|^2\,dr\Big)^{p/2}
&\le C_p E\Big(\sup_{t\le T}|Y^1_t-Y^2_t|^p\\
&\quad+\Big(\int^T_0|f^1(r,Y^2_r)-f^2(r,Y^2_r)|\,dr\Big)^p\Big),
\end{align}
so  $(Z^1-Z^2)\in\mathcal{H}^p$.
To show the desired estimate for $Y^1-Y^2$, we use once again
\cite[Corollary A.5]{KRzS}. We have
\begin{align}
\label{prop2.2.0}
\nonumber
|Y^1_t-Y^2_t|&\le \int^T_t\mathrm{sgn}(Y^1_r-Y^2_r)\hat{f}(r,Y^1_r-Y^2_r)\,dr
+\int^T_t\mathrm{sgn}(Y^1_{r-}-Y^2_{r-})\,d(R^1_r-R^2_r)^*\\
&\quad+\sum_{t\le r< T}\mathrm{sgn}(Y^1_r-Y^2_r)\Delta^+(R^1_r-R^2_r)
-\int^T_t\mathrm{sgn}(Y^1_r-Y^2_r)(Z^1_r-Z^2_r)\,dB_r.
\end{align}
Since  $f^1$ satisfies (H2) and $\mu\le 0$,
\begin{equation}\label{prop2.2}
\mathrm{sgn}(y)\hat{f}(y)\le \,\lambda\mu|y|+|\hat{f}(r,0)|\le|\hat{f}(r,0)|=|f^1(r,Y^2_r)-f^2(r,Y^2_r)|.
\end{equation}
Since
$\mathrm{sgn}(Y^1_r-Y^2_r)=|Y^1_r-Y^2_r|^{-1}(Y^1_r-Y^2_r)\mathbf{1}_{\{Y^1_r-Y^2_r\neq 0\}}$,
it follows from  (\ref{lem1.1}), (\ref{prop2.2.0}) and (\ref{prop2.2}) that
\begin{align*}
|Y^1_t-Y^2_t|\le \int^T_0|f^1(r,Y^2_r)-f^2(r,Y^2_r)|\,dr-\int^T_t\mathrm{sgn}(Y^1_r-Y^2_r)(Z^1_r-Z^2_r)\,dB_r.
\end{align*}
Hence
\begin{align*}
|Y^1_t-Y^2_t|\le E\Big(\int^T_0|f^1(r,Y^2_r)-f^2(r,Y^2_r)|\,dr\,|\mathcal{F}_t\Big)
\end{align*}
since $\int^{\cdot}_0\mathrm{sgn}(Y^1_r-Y^2_r)(Z^1_r-Z^2_r)\,dB_r$ is a uniformly integrable martingale. This implies that
\begin{align*}
E\sup_{t\le T}|Y^1_t-Y^2_t|^p&\le E\Big(\sup_{t\le T} E\Big(\int^T_0|f^1(r,Y^2_r)-f^2(r,Y^2_r)|\,dr\,|\mathcal{F}_t\Big)^p\Big).
\\
&\le C_pE\Big(\int^T_0|f^1(r,Y^2_r)-f^2(r,Y^2_r)|\,dr\Big)^p.
\end{align*}
Combining the above inequality with (\ref{eq3.0}) completes the proof.
\end{proof}

\subsubsection{Existence of solutions for general $f$}

\begin{theorem}
Assume that $p=1$ and \textnormal{(H1)-(H5), (H6*), (Z)} are satisfied. Then there exists a unique solution $(Y,Z,R)$ of \textnormal{RBSDE}$(\xi,f+dV,L,U)$ such that $Y$ is of class \textnormal{(D)}, $Z\in\mathcal{H}^q$, $q\in(0,1)$ and $R\in\mathcal{V}^1$.
\end{theorem}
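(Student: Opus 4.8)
The plan is to treat uniqueness and existence separately, obtaining existence by a Picard iteration in the $z$-variable, each step of which is solved by Theorem~\ref{thmz}. \emph{Uniqueness.} Let $(Y^1,Z^1,R^1)$ and $(Y^2,Z^2,R^2)$ be two solutions of the kind considered, and fix $q\in(\alpha,1)$; then $Z^i\in\mathcal{H}^q\subset L^q((0,T)\otimes\Omega)$, so Lemma~\ref{uw2.0}, applied with $\xi^1=\xi^2=\xi$, $f^1=f^2=f$, $V^1=V^2=V$, $L^1=L^2=L$, $U^1=U^2=U$ and then once more with the two solutions interchanged, yields $(Y^1-Y^2)^+,(Y^2-Y^1)^+\in\mathcal{S}^{q/\alpha}$ with $q/\alpha>1$. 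Since $f$ satisfies \textnormal{(H1), (H2)}, Proposition~\ref{proposition2.2} forces $Y^1\le Y^2$ and $Y^2\le Y^1$, hence $Y^1=Y^2$. Subtracting the two equations, the local martingale $\int_0^{\cdot}(Z^1-Z^2)\,dB$ is then a process of finite variation, so it vanishes; therefore $Z^1=Z^2$ $dt\otimes dP$-a.e.\ and $R^1=R^2$.

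\emph{Existence: the iteration.} After the standard reduction to $\mu=0$, put $Z^0:=0$ and, for $n\ge1$, $f_n(r,y):=f(r,y,Z^{n-1}_r)$. This generator does not depend on $z$; \textnormal{(H1)} holds trivially, \textnormal{(H2), (H4)} are inherited, and \textnormal{(H3), (H5), (H6*)} follow from the corresponding hypotheses on $f$ together with \textnormal{(Z)}, using $|f_n(r,y)|\le|f(r,y,0)|+\gamma(g_r+|y|+|Z^{n-1}_r|)^\alpha$, the subadditivity of $x\mapsto x^\alpha$, and the fact that $Z^{n-1}\in\mathcal{H}^{q'}$ for every $q'\in(0,1)$ (for \textnormal{(H6*)} one keeps the Mokobodzki process $X$, which, being of class \textnormal{(D)}, belongs to $\mathcal{S}^{q'}$ for every $q'\in(0,1)$). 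Thus Theorem~\ref{thmz} gives a unique solution $(Y^n,Z^n,R^n)$ of \textnormal{RBSDE}$(\xi,f_n+dV,L,U)$ with $Y^n$ of class \textnormal{(D)}, $Z^n\in\mathcal{H}^{q'}$ ($q'\in(0,1)$) and $R^n\in\mathcal{V}^1$, so the iteration is well defined.

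\emph{Existence: convergence.} This proceeds in three steps. \emph{(i) Uniform bounds.} Squeezing $Y^n$ between the solutions of \underline{R}BSDE$(\xi,f_n+dV,L)$ and $\overline{\textnormal{R}}$BSDE$(\xi,f_n+dV,U)$, using estimates of the type \cite[Theorem~4.1]{KRzS} and then the Tanaka-type formula \cite[Corollary~A.5]{KRzS} together with the Burkholder--Davis--Gundy inequality applied to $|Y^n|$ and $|Y^n|^2$, exactly as in the proofs of Propositions~\ref{prop2.5} and~\ref{prop2}, one derives (for each fixed $q\in(\alpha,1)$) a recursive inequality $a_n\le C(1+a_{n-1}^{\beta})$ with $C$ independent of $n$ and some $\beta\in(0,1)$, where $a_n:=|Y^n|_1+E|R^n|_T+\|Z^n\|_{\mathcal{H}^q}$; here \textnormal{(Z)} (i.e.\ $\alpha<1$) is essential, and since $\beta<1$ the recursion stays bounded, so $\sup_n a_n<\infty$. \emph{(ii) Cauchy property.} By \textnormal{(H1)}, $|f_{n+1}(r,y)-f_n(r,y)|\le\lambda|Z^n_r-Z^{n-1}_r|$; applying \cite[Corollary~A.5]{KRzS} to $|Y^{n+1}-Y^n|$ and $|Y^{n+1}-Y^n|^2$, discarding the reflection terms via the minimality conditions and $L\le Y^n,Y^{n+1}\le U$ (as in the proof of Proposition~\ref{prop2}) and the $y$-increments via $\mu=0$, and then running the fractional-moment $L^1$ estimates of \cite{bdh} on the resulting inequalities, one shows that $(Y^n)$ is Cauchy for $|\cdot|_1$, $(Z^n)$ is Cauchy in $\mathcal{H}^q$ and $(R^n)$ is Cauchy in $\mathcal{V}^1$; write $(Y,Z,R)$ for the limits. \emph{(iii) Passage to the limit.} Lemma~\ref{r.5} (or a direct argument of the same kind) yields $Z^n\to Z$ $dt\otimes dP$-a.e.\ and $\int_0^T|f(r,Y^n_r,Z^n_r)-f(r,Y_r,Z_r)|\,dr\to0$ in probability; passing to the limit in the equation for $Y^n$, in $L_t\le Y^n_t\le U_t$, and in the minimality condition (which is preserved under $R^n\to R$ and $Y^n\to Y$ by lower semicontinuity of the integrals against $dR^{\pm,*}$ and convergence of the jump sums) shows that $(Y,Z,R)$ solves \textnormal{RBSDE}$(\xi,f+dV,L,U)$. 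Since step (i) applies for every $q\in(\alpha,1)$, we get $Z\in\mathcal{H}^q$ for all $q\in(0,1)$, and Proposition~\ref{prop2.5} gives in addition $E\big(\int_0^T|f(r,Y_r,Z_r)|\,dr\big)<\infty$.

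The step I expect to be the main obstacle is \emph{(ii)}. For $p>1$, Proposition~\ref{prop2} furnishes an $L^p$ bound on differences of solutions which, after a suitable reduction (on short time intervals with patching, or via a weighted norm), makes the Picard map a genuine contraction; for $p=1$ no such contraction is available, and the Cauchy property must be wrung out of the sublinear growth hypothesis \textnormal{(Z)} ($\alpha<1$) by delicate estimates in the non-locally-convex spaces $\mathcal{H}^q$, $q<1$, here further complicated by the need to control the increments of the two reflection processes $R^{n,+}$ and $R^{n,-}$ simultaneously.
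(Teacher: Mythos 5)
Your overall architecture --- uniqueness via Lemma \ref{uw2.0} combined with Proposition \ref{proposition2.2}, and existence via the Picard iteration $f_n(r,y)=f(r,y,Z^{n-1}_r)$ with each step solved by Theorem \ref{thmz} --- is exactly the paper's. But the step you yourself flag as the main obstacle, the Cauchy property of the iterates, is precisely the point of the proof, and you leave it unresolved (``must be wrung out \dots by delicate estimates in the non-locally-convex spaces $\mathcal{H}^q$''). The paper's resolution is concrete: by (Z), Corollary A.5 of \cite{KRzS} together with the minimality conditions for $R^{n+1},R^n$ gives
\[
|Y^{n+1}_\sigma-Y^n_\sigma|\le 2\gamma\, E\Big(\int_0^T(g_r+|Y^n_r|+|Z^n_r|+|Z^{n-1}_r|)^\alpha\,dr\,\Big|\,\mathcal{F}_\sigma\Big),
\]
and since $\alpha<1$ one may fix $\bar q\in(1,2)$ with $\alpha\bar q<1$ so that the conditioned random variable lies in $\mathbb{L}^{\bar q}$; Doob's inequality then places the \emph{difference} $Y^{n+1}-Y^n$ in $\mathcal{S}^{\bar q}$ with $\bar q>1$, even though the iterates themselves are only of class (D). This lifts the problem into the $p>1$ regime, where Proposition \ref{prop2} applies to the differences and, combined with (H1) and H\"older's inequality, yields the genuine contraction $\|(Y^{n+1}-Y^n,Z^{n+1}-Z^n)\|^{\bar q}\le C_{\bar q}\lambda^{\bar q}T^{\bar q/2}\|(Y^n-Y^{n-1},Z^n-Z^{n-1})\|^{\bar q}$, with constant less than $1$ after splitting $[0,T]$ into small intervals. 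Your step (i) (the bounded recursion $a_n\le C(1+a_{n-1}^{\beta})$) gives a priori bounds but no decay, and no Cauchy property can be extracted from it alone.

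A second, smaller gap is in your step (iii): you pass to the limit in the minimality condition, claiming it is preserved ``by lower semicontinuity of the integrals against $dR^{\pm,*}$ and convergence of the jump sums.'' For optional barriers this is delicate (the condition involves $\limsup_{s\uparrow r}L_s$ and right-jump sums, and you have not established convergence of $R^n$ in any mode strong enough to control these terms). The paper avoids the issue entirely: having obtained the limit $(Y,Z)$ of the iterates, it invokes Theorem \ref{thmz} to produce a solution $(\bar Y,\bar Z,\bar R)$ of the equation with frozen generator $\bar f(r,y)=f(r,y,Z_r)$, and shows $(\bar Y,\bar Z)=(Y,Z)$ by one more application of the same contraction estimate; the reflecting process of the limit is then $\bar R$, supplied ready-made with its minimality condition. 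I recommend adopting both devices.
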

\begin{proof}
Without loss of generality we may assume that $\mu=0$. We will use Picard's iteration method. Set $(Y^0,Z^0,R^0)=(0,0,0)$ and for  $n\ge 0$ define
\[
Y^{n+1}_t=\xi+\int^T_t f(r,Y^{n+1}_r,Z^n_r)\,dr+\int^T_t\,dV_r+\int^T_t\,dR^{n+1}_r-\int^T_t Z^{n+1}_r\,dB_r,\quad t\in[0,T].
\]
Let $f_n(r,y)=f(r,y,Z^{n-1}_r)$. For each $n\ge 0$, $(Y^n,Z^n,R^n)$ is a solution of RBSDE$(\xi,f_n+dV,L,U)$ such that $Y^n$ is of class (D), $Z\in\mathcal{H}^q$, $q\in(0,1)$, and $R\in\mathcal{V}^1$. The existence of this solution follows from Theorem 2.7, since by induction, $E\int^T_0|f_n(r,0)|+|f_n(r,X_r)|\,dr<\infty$, $n\ge 1$.
By \cite[Corollary A.5]{KRzS} and the minimality conditions for $R^{n+1}$ and $R^n$, for any $\sigma,\tau\in\Gamma$, such that  $\sigma\le\tau$ we have
\begin{align}\label{th2.0}
\nonumber
|Y^{n+1}_{\sigma}-Y^n_{\sigma}|&\le \int^{\tau}_{\sigma}\sgn (Y^{n+1}_r-Y^n_r)(f(r,Y^{n+1}_r,Z^n_r)-f(r,Y^n_r,Z^{n-1}_r))\,dr\\
&\quad-\int^{\tau}_{\sigma}\sgn (Y^{n+1}_r-Y^n_r)(Z^{n+1}_r-Z^n_r)\,dB_r,\quad t\in[0,T].
\end{align}
By (H2) and (Z),
\begin{align*}
&\sgn (Y^{n+1}_r-Y^n_r)(f(r,Y^{n+1}_r,Z^n_r)-f(r,Y^n_r,Z^{n-1}_r))\le\sgn (Y^{n+1}_r-Y^n_r)(f(r,Y^n_r,Z^n_r)\\
&\quad-f(r,Y^n,0)+f(r,Y^n,0)-f(r,Y^n_r,Z^{n-1}_r))\le 2\gamma(g_r+|Y^n_r|+|Z^n_r|+|Z^{n-1}_r|)^{\alpha}.
\end{align*}
Let $\{\tau_k\}$ be a localizing sequence for the local martingale $\int^{\cdot}_{\sigma}\sgn (Y^{n+1}_r-Y^n_r)(Z^{n+1}_r-Z^n_r)\,dB_r$. By the above inequality and (\ref{th2.0}), for $n\ge 1$ we have
\[
|Y^{n+1}_{\sigma}-Y^n_{\sigma}|\le E\Big(|Y^{n+1}_{\tau_k}-Y^n_{\tau_k}|+2\gamma\int^T_0 (g_r+|Y^n_r|+|Z^n_r|+|Z^{n-1}_r|)^{\alpha}\,dr|\mathcal{F}_{\sigma}\Big).
\]
Letting $k\rightarrow\infty$ and using the fact that $Y^{n+1},Y^n$ are of class (D) we get
\begin{equation}\label{th2.0.0}
|Y^{n+1}_{\sigma}-Y^n_{\sigma}|\le 2\gamma E\Big(\int^T_0 (g_r+|Y^n_r|+|Z^n_r|+|Z^{n-1}_r|)^{\alpha}\,dr|\mathcal{F}_{\sigma}\Big).
\end{equation}
Note that $Z^n,Z^{n-1}\in\mathcal{H}^q$, $q\in(0,1)$, $Y^n$ is of class (D) and $\{g_t\}_{t\in[0,T]}$ is integrable. Therefore the random variable
$I_n:=\int^T_0 (g_r+|Y^n_r|+|Z^n_r|+|Z^{n-1}_r|)^{\alpha}\,dr$
belongs to $\mathbb{L}^q$ supposing that $\alpha\cdot q<1$. Fix $\bar{q}\in(1,2)$ such that $\alpha\cdot \bar{q}<1$. Then, by  Doob's inequality and (\ref{th2.0.0}), $Y^{n+1}-Y^n\in\mathcal{S}^{\bar{q}}$ for $n\ge 1$. Note that
\[
Y^{n+1}_t-Y^n_t=\int^T_t \hat f_n(r,Y^{n+1}_r-Y^n_r)\,dr+\int^T_t\,d(R^{n+1}_r-R^n_r)-\int^T_t(Z^{n+1}_r-Z^n_r)\,dB_r,\quad t\in[0,T],
\]
where $\hat f_n(r,y)=f(r,y+Y^n_r,Z^n_r)-f(r,Y^n_r,Z^{n-1}_r)$. By (Z) we have
\begin{align*}
\int^T_0|f(r,Y^n_r,Z^n_r)-f(r,Y^n_r,Z^{n-1}_r)|\,dr\le C\int^T_0 (g_r+|Y^n_r|+|Z^n_r|+|Z^{n-1}_r|)^{\alpha}.
\end{align*}
Since $I_n\in\mathbb{L}^{\bar{q}}$, it follows from  Proposition \ref{prop2} that
$Z^{n+1}-Z^n\in\mathcal{H}^{\bar{q}}$  and  there exists a constant $C_{\bar{q}}$ such that for all $n\ge 1$,
\[
\|(Y^{n+1}-Y^n,Z^{n+1}-Z^n)\|^{\bar{q}}\le C_{\bar{q}}E\Big(\Big(\int^T_0|f(r,Y^n_r,Z^n_r)-f(r,Y^n_r,Z^{n-1}_r)|\,dr\Big)^{\bar{q}}\Big),
\]
where
\[
\|(Y,Z)\|=\Big(E\Big(\sup_{t\le T}|Y_t|^{\bar{q}}+\Big(\int^T_0|Z_r|^2\,dr\Big)^{\bar{q}/{2}}
\Big)\Big)^{1/\bar{q}}.
\]
Since $f$ satisfies (H1), using H\"older's inequality we get
\[
\|(Y^{n+1}-Y^n,Z^{n+1}-Z^n)\|^{\bar{q}}\le CE\Big(\Big(\int^T_0|Z^n_r-Z^{n-1}_r|^2\,dr\Big)^{\bar{q}/2}\Big)
\]
for $n\ge2$, where $C=C_{\bar{q}}\lambda^{\bar{q}} T^{\bar{q}/2}$. Therefore, for $n\ge 2$,
\[
\|(Y^{n+1}-Y^n,Z^{n+1}-Z^n)\|^{\bar{q}}\le C^{n-1}\|(Y^2-Y^1,Z^2-Z^1)\|^{\bar{q}}.
\]
If $C=C_{\bar{q}}\lambda^{\bar{q}} T^{\bar{q}/2}<1$, then using the above inequality one can deduce that  $\{(Y^n-Y^1,Z^n-Z^1)\}$ is a Cauchy sequence, so $(Y^n-Y^1,Z^n-Z^1)$ converges to some $(U,V)$ in $\mathcal{S}^{\bar{q}}\times\mathcal{H}^{\bar{q}}$. Since $(Y^1,Z^1)\in\mathcal{S}^{\beta}\times\mathcal{H}^{\beta}$, $\beta\in(0,1)$, it follows that  $(Y^n,Z^n)$ converges to $(Y,Z):=(U+Y^1,V+Z^1)$ in $\mathcal{S}^{\beta}\times\mathcal{H}^{\beta}$, $\beta\in(0,1)$. Moreover, $Y^1$ is of class (D), so $Y^n\rightarrow Y$ in the norm $\|\cdot\|_1$. In the general case, we divide $[0,T]$ into a finite number of small intervals and use the standard argument.

Let $\bar{f}(r,y)=f(r,y,Z_r)$ and  $(\bar{Y},\bar{Z},\bar{R})$ be a solution of RBSDE($\xi$,$\bar{f}+dV$,$L$,$U$) such that $\bar{Y}$ is of class (D), $\bar{Z}\in\mathcal{H}^q$, $q\in(0,1)$, $\bar{R}\in\mathcal{V}^1$. The existence of the solution follows from Theorem \ref{thmz}. Repeating the reasoning following (\ref{th2.0}), but with $Y^{n+1}$ replaced by $\bar{Y}$, we get
\[
\|(\bar{Y}-Y^n,\bar{Z}-Z^n)\|^{\bar{q}}\le C_{\bar{q}}E\Big(\Big(\int^T_0|f(r,Y^n_r,Z_r)-f(r,Y^n_r,Z^{n-1}_r)|\,dr\Big)^{\bar{q}}\Big).
\]
Since $f$ satisfies (H1), using H\"older's inequality we obtain
\[
\|(\bar{Y}-Y^n,\bar{Z}-Z^n)\|^{\bar{q}}\le CE\Big(\Big(\int^T_0|Z_r-Z^{n-1}_r|^2\,dr\Big)^{\bar{q}/2}\Big)\rightarrow 0,\,n\rightarrow\infty
\]
for $n\ge2$, where $C=C_{\bar{q}}\lambda^{\bar{q}} T^{\bar{q}/2}$. Hence $(\bar{Y},\bar{Z})=(Y,Z)$. Therefore the triple $(Y,Z,\bar{R})$ is a solution of RBSDE($\xi$,$f+dV$,$L$,$U$). This completes the proof.
\end{proof}

\begin{theorem}
Assume that $p>1$ and \mbox{\rm(H1)-(H6)} are satisfied. Then there exists a unique solution $(Y,Z,R)$ of \textnormal{RBSDE}$(\xi,f+dV,L,U)$ such that $Y\in\mathcal{S}^p$, $Z\in\mathcal{H}^p$ and $R\in\mathcal{V}^p$.
\end{theorem}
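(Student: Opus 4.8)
\emph{Proof plan.}
The plan is to realize $(Y,Z,R)$ as a fixed point of the operator that assigns to a process $\mathcal{Z}\in\mathcal{H}^p$ the $Z$--component of the solution of the reflected equation with the $z$--variable of $f$ frozen along $\mathcal{Z}$; such a fixed point is automatically a solution of $\mathrm{RBSDE}(\xi,f+dV,L,U)$, because for it the generator evaluated along the solution equals $f(\cdot,Y,Z)$, and the minimality condition in Definition \ref{def1}(b) does not involve the generator at all. As usual one reduces to $\mu=0$. For $\mathcal{Z}\in\mathcal{H}^p$ set $f_{\mathcal{Z}}(r,y)=f(r,y,\mathcal{Z}_r)$. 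This generator is independent of $z$, it inherits \textnormal{(H1), (H2), (H4), (H5)} from $f$, and since $\int_0^T|\mathcal{Z}_r|\,dr\le T^{1/2}\big(\int_0^T|\mathcal{Z}_r|^2\,dr\big)^{1/2}\in\mathbb{L}^p$ and, by \textnormal{(H1)}, $|f_{\mathcal{Z}}(r,0)|\le|f(r,0,0)|+\lambda|\mathcal{Z}_r|$ and $|f_{\mathcal{Z}}(r,X_r)|\le|f(r,X_r,0)|+\lambda|\mathcal{Z}_r|$, it also satisfies \textnormal{(H3)} and \textnormal{(H6)} (with the same process $X$). Hence, by Theorem \ref{thmz}, there is a unique triple $\Phi(\mathcal{Z})=(Y,Z,R)$ in $\mathcal{S}^p\times\mathcal{H}^p\times\mathcal{V}^p$ solving $\mathrm{RBSDE}(\xi,f_{\mathcal{Z}}+dV,L,U)$, so that $\Psi:\mathcal{H}^p\ni\mathcal{Z}\mapsto Z\in\mathcal{H}^p$ is well defined.

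I would then show that $\Psi$ is a contraction when $T$ is small. Given $\mathcal{Z}^1,\mathcal{Z}^2\in\mathcal{H}^p$ with $(Y^i,Z^i,R^i)=\Phi(\mathcal{Z}^i)$, the triples solve $\mathrm{RBSDE}(\xi,f_{\mathcal{Z}^i}+dV,L,U)$, $f_{\mathcal{Z}^i}$ is independent of $z$ and satisfies \textnormal{(H2)}, $Y^1-Y^2\in\mathcal{S}^p$, and by \textnormal{(H1)}
\[
\int_0^T|f_{\mathcal{Z}^1}(r,Y^2_r)-f_{\mathcal{Z}^2}(r,Y^2_r)|\,dr\le\lambda\int_0^T|\mathcal{Z}^1_r-\mathcal{Z}^2_r|\,dr\le\lambda T^{1/2}\Big(\int_0^T|\mathcal{Z}^1_r-\mathcal{Z}^2_r|^2\,dr\Big)^{1/2}\in\mathbb{L}^p.
\]
Thus Proposition \ref{prop2} applies, and writing $\|(Y,Z)\|^p:=E\sup_{t\le T}|Y_t|^p+E\big(\int_0^T|Z_r|^2\,dr\big)^{p/2}$ it gives
\begin{align*}
\|(Y^1-Y^2,Z^1-Z^2)\|^p
&\le C_p\,E\Big(\int_0^T|f(r,Y^2_r,\mathcal{Z}^1_r)-f(r,Y^2_r,\mathcal{Z}^2_r)|\,dr\Big)^p\\
&\le C_p\lambda^pT^{p/2}\,E\Big(\int_0^T|\mathcal{Z}^1_r-\mathcal{Z}^2_r|^2\,dr\Big)^{p/2},
\end{align*}
so $\|\Psi(\mathcal{Z}^1)-\Psi(\mathcal{Z}^2)\|_{\mathcal{H}^p}\le(C_p\lambda^pT^{p/2})^{1/p}\|\mathcal{Z}^1-\mathcal{Z}^2\|_{\mathcal{H}^p}$. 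If $C_p\lambda^pT^{p/2}<1$, the Banach fixed point theorem yields a unique $Z\in\mathcal{H}^p$ with $\Psi(Z)=Z$; then $\Phi(Z)=(Y,Z,R)$ solves $\mathrm{RBSDE}(\xi,f(\cdot,\cdot,Z_\cdot)+dV,L,U)$, hence, since $f(r,Y_r,Z_r)=f_Z(r,Y_r)$, it is a solution of $\mathrm{RBSDE}(\xi,f+dV,L,U)$ in the required class. For an arbitrary $T$ one takes a partition $0=t_0<\dots<t_N=T$ with $C_p\lambda^p(t_i-t_{i-1})^{p/2}<1$, constructs the solution on $[t_{i-1},t_i]$ successively from the right (using on $[t_{i-1},t_i]$ the already built terminal value $Y_{t_i}$, which satisfies $L_{t_i}\le Y_{t_i}\le U_{t_i}$, and noting that \textnormal{(H1)--(H6)} restrict to each subinterval), and glues the pieces; the concatenated triple is a solution on $[0,T]$, the equation in Definition \ref{def1}(d) splitting additively over the partition and the minimality condition being local, exactly as in \cite{KRzS}.

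Uniqueness follows from Proposition \ref{proposition2.2}: if $(Y^i,Z^i,R^i)$, $i=1,2$, are solutions in $\mathcal{S}^p\times\mathcal{H}^p\times\mathcal{V}^p$, then $(Y^1-Y^2)^+\in\mathcal{S}^p$ with $p>1$, so applying the proposition with $\xi^1=\xi^2=\xi$, $f^1=f^2=f$, $V^1=V^2=V$, $L^1=L^2$, $U^1=U^2$ gives $Y^1\le Y^2$, and by symmetry $Y^1=Y^2$; inserting this into Definition \ref{def1}(d) shows that the continuous local martingale $\int_{\cdot}^T(Z^1_r-Z^2_r)\,dB_r$ equals a finite-variation process and hence vanishes, so $Z^1=Z^2$ $dt\otimes dP$-a.e.\ and then $R^1=R^2$.

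The step I expect to be the main obstacle is the passage from small $T$ to arbitrary $T$: while the short-interval fixed point argument is routine once Theorem \ref{thmz} and Proposition \ref{prop2} are available, one must verify carefully that concatenating the solutions built on the subintervals $[t_{i-1},t_i]$ with matched terminal data produces a genuine solution on $[0,T]$ --- in particular that the right-jump terms of the minimality condition at the partition points are produced correctly and that assumption \textnormal{(H6)} (the existence of the Mokobodzki process) is preserved under restriction to the subintervals. The remaining ingredients reduce entirely to Theorem \ref{thmz} and Propositions \ref{prop2} and \ref{proposition2.2}.
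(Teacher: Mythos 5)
Your proposal is correct and follows essentially the same route as the paper: both freeze the $z$-variable, invoke Theorem \ref{thmz} for the resulting equation with generator independent of $z$, and use Proposition \ref{prop2} together with (H1) and H\"older's inequality to obtain a contraction (the paper on $\mathcal{S}^p\oplus\mathcal{H}^p$, you on $\mathcal{H}^p$ alone, which is an immaterial difference), concluding by Banach's fixed point theorem on a small interval and subdivision of $[0,T]$ in general. Your added detail on uniqueness via Proposition \ref{proposition2.2} and on checking (H3), (H6) for the frozen generator is consistent with what the paper leaves implicit.
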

\begin{proof}
Consider the space $\mathcal{S}^p\oplus\mathcal{H}^p$ equipped in the norm
\[
\|(Y,Z)\|_{\mathcal{S}^p\oplus\mathcal{H}^p}=\Big(E\Big(\sup_{t\le T} |Y_t|^p+\Big(\int^T_0|Z_r|^2\,dr\Big)^{{p}/{2}}\Big)\Big)^{{1}/{p}}.
\]
Define $\phi:\mathcal{S}^p\oplus\mathcal{H}^p\longrightarrow\mathcal{S}^p\oplus\mathcal{H}^p$
as $\phi((G,H))=(Y,Z)$, where $(Y,Z,R)$ is the unique solution of RBSDE($\xi$,$\hat{f}+dV$,$L$,$U$) with $\hat{f}(t,y)=f(t,y,H_t)$ such that $Y\in\mathcal{S}^p$, $Z\in\mathcal{H}^p$, $R\in\mathcal{V}^p$. The existence and uniqueness of such solution follows from Theorem \ref{thmz}. Let $(Y^1,Z^1),(Y^2,Z^2)\in\mathcal{S}^p\oplus\mathcal{H}^p$ and  $(G,H),(G',H')\in\mathcal{S}^p\oplus\mathcal{H}^p$ be such that $(Y,Z)=\phi(G,H)$ and $(Y',Z')=\phi(G',H')$. By Proposition \ref{prop2}, there exists a constant $C_p$ such that
\[
\|(Y-Y',Z-Z')\|_{\mathcal{S}^p\oplus\mathcal{H}^p}\le C_pE\Big(\Big(\int^T_0|f(r,Y'_r,H_r)-f(r,Y'_r,H'_r)|\,dr\Big)^{p}\Big).
\]
Since $f$ satisfies (H1), applying H\"older's inequality yields
\[
\|(Y-Y',Z-Z')\|_{\mathcal{S}^p\oplus\mathcal{H}^p}\le CE\Big(\Big(\int^T_0|H_r-H'_r|^2\,dr\Big)^{{p}/{2}}\Big)
\]
with $C=C_p\lambda^p T^{{p}/{2}}$. Hence
\[
\|(Y-Y',Z-Z')\|_{\mathcal{S}^p\oplus\mathcal{H}^p}\le C\|(G-G',H-H')\|_{\mathcal{S}^p\oplus\mathcal{H}^p}.
\]
If $C=C_p\lambda^p T^{{p}/{2}}<1$, then  $\phi$ is a contraction, so by Banach's fixed-point theorem,  there exists $(\bar{Y},\bar{Z})$ such that $\phi(\bar{Y},\bar{Z})=(\bar{Y},\bar{Z})$. We set $\bar{R}=R$. Then the triple $(\bar{Y},\bar{Z},\bar{R})$ is a solution of RBSDE$(\xi,f+dV,L,U)$. To get the existence of a solution in the general case we divide $[0,T]$ into a finite number of small intervals and use the standard argument.
\end{proof}

\nsubsection{Penalization methods for RBSDEs with two regulated barriers}
\label{sec4}

In this section we assume additionally that the barriers $L,U$ are $\mathbb{F}$-adapted regulated processes. We  consider approximation of the solution of RBSDE($\xi$,$f+dV$,$L$,$U$) by modified penalization methods.

\subsubsection{Monotone penalization method via RBSDEs}

By \cite[Theorem 3.20]{KRzS}, for each $n\ge1$ there exists a solution   $(\bar{Y}^n,\bar{Z}^n,\bar{A}^n)$ of $\mathrm{\overline{R}BSDE}$, with upper barrier $U$, of the form
\begin{align}\nonumber
\bar{Y}^n_t&=\xi+\int^T_t f(r,\bar{Y}^n_r,\bar{Z}^n_r)\,dr+\int^T_t\,dV_r-\int^T_t\,d\bar{A}^n_r-\int^T_t \bar{Z}^n_r\,dB_r\\
&\quad+n\int^T_t(\bar{Y}^n_r-L_r)^-\,dr
+\sum_{t\le\sigma_{n,i}<T}(\bar{Y}^n_{\sigma_{n,i}+}+\Delta^+V_{\sigma_{n,i}}-L_{\sigma_{n,i}})^-\label{eq4.1}
\end{align}
such that if $p>1$, then $\bar{Y}^n\in\mathcal{S}^p$, $\bar{Z}^n\in\mathcal{H}^p$, $\bar{A}^n\in\mathcal{V}^{+,p}$, and if $p=1$, then $\bar{Y}^n$ is of class (D), $\bar{Z}^n\in\mathcal{H}^q$, $q\in(0,1)$, $\bar{A}^n\in\mathcal{V}^{+,1}$. In (\ref{eq4.1}),  $\{\{\sigma_{n,i}\}\}$ is an array of stopping times exhausting  the right-side jumps of $L$ and $V$. It is defined inductively as follows. We set $\sigma_{1,0}=0$, and then
\[
\sigma_{1,i}=\inf\{t>\sigma_{1,i-1};\,\Delta^+L_t<-1\,\,\mathrm{or}\,\,\Delta^+V_t<-1\}\wedge T,\,i=1,\dots,k_1
\]
for some $k_1\in\N$. Next, for $n\in\N$ and given array $\{\{\sigma_{n,i}\}\}$, we set $\tilde{\sigma}_{n+1,0}=0$,
\[
\tilde{\sigma}_{n+1,i}=\inf\{t>\tilde{\sigma}_{n+1,i-1};\,\Delta^+L_t<-1/(n+1)\,\,\mathrm{or}\,\,\Delta^+V_t<-1/(n+1)\}\wedge T,\,i\ge 1.
\]
Let $j_{n+1}$ be chosen so that $P(\tilde{\sigma}_{n+1,j_{n+1}}<T)\le\frac1n$. We put
\[\sigma_{n+1,i}=\tilde{\sigma}_{n+1,i},\quad i=1,\dots,j_{n+1},\quad\sigma_{n+1,i+j_{n+1}}
=\tilde{\sigma}_{n+1,j_{n+1}}\vee \sigma_{n,i},\quad i=1,....,k_n,\]
$k_{n+1}=j_{n+1}+k_{n}+1$. Finally, we put $\sigma_{n+1,k_{n+1}}=T$. Since $\Delta^+L_r<-1/n$ or $\Delta^+V_t<-1/n$ implies that $\Delta^+L_r<-1/(n+1)$ or $\Delta^+V_t<-1/(n+1)$, it follows from our construction that
\begin{equation}
\label{eq4.2}
\bigcup^{k_n}_{i=1} [[\sigma_{n,i}]]\subset \bigcup^{k_{n+1}}_{i=1} [[\sigma_{n+1,i}]]\quad n\in\N.\end{equation}
Observe that, on  each interval $(\sigma_{n,i-1},\sigma_{n,i}]$,
$i=1,\dots,{k_n+1}$, the triple
$(\bar{Y}^n,\bar{Z}^n,\bar{A}^n)$ is a solution of the classical $\mathrm{\overline{R}BSDE}$ of the form
\begin{align*}
\bar{Y}^n_t&=L_{\sigma_{n,i}}\vee(\bar{Y}^n_{\sigma_{n,i}+}+\Delta^+V_{\sigma_{n,i}})\wedge U_{\sigma_{n,i}}+\int^{\sigma_{n,i}}_t f(r,\bar{Y}^n_r,\bar{Z}^n_r)\,dr+\int^{\sigma_{n,i}}_t\,dV_r\\
&\quad-\int^{\sigma_{n,i}}_t\,d\bar{A}^n_r-\int^{\sigma_{n,i}}_t \bar{Z}^n_r\,dB_r+n\int^{\sigma_{n,i}}_t(\bar{Y}^n_r-L_r)^-\,dr,\quad t\in(\sigma_{n,i-1},\sigma_{n,i}]
\end{align*}
with upper barrier $U$ and $\bar{Y}^n_0=L_0\vee(\bar{Y}^n_{0+}+\Delta^+V_0)\wedge U_0$, $n\in\mathbb{N}$. Therefore, to solve equation (\ref{eq4.1}), we  divide $[0,T]$ into a finite number of intervals $[0,\sigma_{n,1}],\dots(\sigma_{n,k_n-1},T]$ and we solve the equation on each interval $(\sigma_{n,i-1},\sigma_{n,i}]$ starting from $(\sigma_{n,k_n-1},T]$.

Note that  (\ref{eq4.1}) can be written in the shorter form
\begin{equation}
\label{eq4.3}
\bar{Y}^n_t=\xi+\int_t^Tf(r,\bar{Y}^n_r,\bar{Z}^n_r)\,dr+\int^T_t\,dV_r
+\int^T_t\,d\bar{A}^n_r-\int_t^T\bar{Z}^n_r\,dB_r
+\int^T_t\,dK^n_r,
\end{equation}
where
\begin{align}
\label{eq4.4} K^n_t&=n\int_0^t (\bar{Y}^n_r-L_r)^-\,dr+\sum_{0\leq \sigma_{n,i}<t}(\bar{Y}^n_{\sigma_{n,i}+}+\Delta^+V_{\sigma_{n,i}}-L_{\sigma_{n,i}})^-\nonumber \\
&\equiv K^{n,*}_t+K^{n,d}_t,\quad t\in[0,T].
\end{align}

\begin{proposition}\label{stw.4.1}
Let $\{\sigma_k;k=0,\dots,m\}$ be an increasing sequence of stopping times such that $\sigma_0=0$ and $\sigma_m=T$. Let $(Y^i,Z^i,A^i)$, $i=1,2$, be a solution of the  $\mathrm{\overline{R}BSDE}$
\begin{align}
\nonumber
Y^i_t&=\xi^i+\int^T_t f^i(s,Y^i_r,Z^i_r)\,dr+\int^T_t\,dV_r-\int^T_t\,dA^i_r-\int^T_t Z^i_r\,dB_r\\
&\quad+\sum_{t\le\sigma_k<T}(Y^i_{\sigma_k+}+\Delta^+V_{\sigma_k}-L_{\sigma_k})^-,\quad t\in[0,T],\label{eq4}
\end{align}
with upper barrier $U$, such that if $p>1$, then  $Y^i\in\mathcal{S}^p$, $Z^i\in\mathcal{H}^p$, $A^i\in\mathcal{V}^{+,p}$, and if $p=1$, then $Y^i$ is of class \textnormal{(D)}, $Z^i\in\mathcal{H}^q$, $q\in(0,1)$, $A^i\in\mathcal{V}^{+,1}$, $i=1,2$. Assume that $p>1$ and \textnormal{(H1)--(H6)} are satisfied or $p=1$ and \textnormal{(H1)--(H5), (H6*), (Z)} are satisfied. Let $\xi^1\le\xi^2$, $f^1\le f^2$. Then $dA^1\le dA^2$.
\end{proposition}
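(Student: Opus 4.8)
The plan is to compare the two $\mathrm{\overline{R}BSDE}$s \emph{interval by interval}, using the fact (noted after \eqref{eq4.4}) that on each $(\sigma_{k-1},\sigma_k]$ the equation \eqref{eq4} is a \emph{classical} reflected BSDE with upper barrier $U$ and a modified terminal condition at $\sigma_k$. I would proceed by backward induction on $k=m,m-1,\dots,1$, showing at each step that $Y^1\le Y^2$ on $(\sigma_{k-1},\sigma_k]$ together with $dA^1\le dA^2$ on that interval. The inductive hypothesis gives $Y^1_{\sigma_k+}\le Y^2_{\sigma_k+}$ (for $k=m$ this is $\xi^1\le\xi^2$), so the modified terminal values satisfy
\[
L_{\sigma_k}\vee(Y^1_{\sigma_k+}+\Delta^+V_{\sigma_k})\wedge U_{\sigma_k}\le L_{\sigma_k}\vee(Y^2_{\sigma_k+}+\Delta^+V_{\sigma_k})\wedge U_{\sigma_k},
\]
since $x\mapsto a\vee x\wedge b$ is nondecreasing; together with $f^1\le f^2$ and $U^1=U^2=U$, the comparison hypotheses of Proposition \ref{proposition2.2} (with $L^1=L^2=L$, $V^1=V^2=V$) are met on $(\sigma_{k-1},\sigma_k]$, giving $Y^1\le Y^2$ there. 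The regularity needed to invoke Proposition \ref{proposition2.2} (resp.\ its class-(D) version via Lemma \ref{uw2.0} and Remark \ref{uwaga2.3} when $p=1$) comes from the assumed integrability of $(Y^i,Z^i,A^i)$.

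The heart of the matter is passing from $Y^1\le Y^2$ to $dA^1\le dA^2$. Here I would use the structure of the reflection: on $(\sigma_{k-1},\sigma_k]$ the process $-A^i$ is, up to the (common) $f^i$- and $V$- and $K$-terms and the $Z^i$-martingale, the Mertens-type regulator that keeps $Y^i$ below $U$, and the minimality condition from Definition \ref{r.2} forces $dA^{i}$ to be carried by $\{Y^i_{r-}=\liminf_{s\uparrow r}U_s\}$ (and the jump part by $\{Y^i_r=U_r\}$ etc.). Writing $Y^1-Y^2=\int_t^{\sigma_k}(f^1(r,Y^1_r,Z^1_r)-f^2(r,Y^2_r,Z^2_r))\,dr-\int_t^{\sigma_k}d(A^1_r-A^2_r)-\int_t^{\sigma_k}(Z^1_r-Z^2_r)\,dB_r$ on the interval and testing against $\mathbf 1_{\{Y^2_{r}=U_r\}}$-type sets: since $Y^1\le Y^2\le U$, on the support of $dA^{2,*}$ we have $Y^2_{r-}=\liminf U$, hence $Y^1_{r-}\le\liminf U$, and on the support of $dA^{1,*}$ we have $Y^1_{r-}=\liminf U$, hence $Y^2_{r-}\ge\liminf U=Y^1_{r-}$, forcing $Y^1_{r-}=Y^2_{r-}=\liminf U$ there; the same reasoning applies to the purely-jumping parts with $Y^i_r$ and $U_r$. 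Combining these with $Y^1\le Y^2$ and the classical argument (as in the proof of Proposition \ref{proposition2.2}, estimating $E((Y^1-Y^2)^+)^p$ or $E(Y^1-Y^2)^+$ and using that it vanishes) one deduces $\mathbf 1_{\{Y^1_{r-}<\liminf_{s\uparrow r}U_s\}}\,dA^{1,*}_r=0$ forces $dA^{1,*}\le dA^{2,*}$, and analogously $\Delta^+A^1\le\Delta^+A^2$, i.e.\ $dA^1\le dA^2$ on $(\sigma_{k-1},\sigma_k]$. This in turn shows $Y^1_{\sigma_{k-1}+}\le Y^2_{\sigma_{k-1}+}$, closing the induction.

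The main obstacle I expect is the bookkeeping of the reflection at the \emph{left} endpoints $\sigma_{k-1}$ and the interplay with the right-jump terms $(Y^i_{\sigma_k+}+\Delta^+V_{\sigma_k}-L_{\sigma_k})^-$ appearing in \eqref{eq4}: one must check that the comparison survives the modification of the terminal value at $\sigma_k$ and that the regulator $A^i$ has no mass charging $[[\sigma_k]]$ in a way that spoils $dA^1\le dA^2$ globally (it does not, because at those times reflection is at the upper barrier $U$ while the $(\cdot-L)^-$ terms push toward $L$, and these are mutually singular). A clean way to organize this is to first prove, on each closed interval, the differential comparison $dA^1\le dA^2$ using the minimality conditions as above, and then glue: because $\{\{\sigma_k\}\}$ partitions $[0,T]$ up to the endpoints and $dA^i$ restricted to the endpoints is governed separately by the explicit $K^n$-type terms (which are identical for $i=1,2$ once the $Y$-comparison is in hand), the pointwise inequalities $dA^{1,*}\le dA^{2,*}$ and $\Delta^+A^1\le\Delta^+A^2$ extend from the interior of each interval to all of $[0,T]$, which is exactly the definition of $dA^1\le dA^2$.
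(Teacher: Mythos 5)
Your overall architecture (interval-by-interval backward induction, comparison of the modified terminal values $L_{\sigma_k}\vee(Y^i_{\sigma_k+}+\Delta^+V_{\sigma_k})\wedge U_{\sigma_k}$ via Proposition \ref{proposition2.2} and Lemma \ref{uw2.0}) matches the paper's proof. But the heart of the matter — deducing $dA^{1}\le dA^{2}$ from $Y^1\le Y^2$ — is where your argument has a genuine gap. What you actually establish from the minimality conditions is a statement about \emph{supports}: $dA^{1,*}$ is carried by $\{Y^1_{r-}=\liminf_{s\uparrow r}U_s\}$, and on that set $Y^1_{r-}=Y^2_{r-}$. This is support containment, not measure domination: two nonnegative measures can be carried by the same set with neither dominating the other (e.g.\ $dA^1=2\,dA^2$). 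Nothing in the "classical argument" of Proposition \ref{proposition2.2} (which only controls $(Y^1-Y^2)^+$) upgrades this to $dA^{1,*}\le dA^{2,*}$, and no comparison theorem for the reflecting measures follows from the minimality condition alone. The paper closes exactly this gap by a different mechanism: on each $(\sigma_k,\sigma_{k+1}]$ it introduces the penalization approximations of \cite{KRzS}, for which $A^{i,n}$ has the \emph{explicit} form $n\int(Y^{i,n}_r-U_r)^+\,dr-\sum(\cdots)^+$, monotone in $Y^{i,n}$; hence $Y^{1,n}\le Y^{2,n}$ gives $dA^{1,n}\le dA^{2,n}$ at the level of densities and increments, and the inequality passes to the limit via the weak $L^1$ convergence $A^{i,n}_\tau\to A^i_\tau$ for every stopping time $\tau$ together with the Section Theorem. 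Some such explicit representation of the regulator is what makes the measure inequality accessible; your route does not supply one.

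A secondary issue is your treatment of the right jumps at the partition times. The claim that the terms $(Y^i_{\sigma_k+}+\Delta^+V_{\sigma_k}-L_{\sigma_k})^-$ are ``identical for $i=1,2$ once the $Y$-comparison is in hand'' is not correct: they depend on $Y^i_{\sigma_k+}$, which differ, and the inequality $\Delta^+A^1_{\sigma_k}\le\Delta^+A^2_{\sigma_k}$ is not automatic. The paper proves it by a case analysis: if $\Delta^+A^1_{\sigma_k}>0$ then minimality forces $Y^1_{\sigma_k}=U_{\sigma_k}$, whence $\Delta^+Y^1_{\sigma_k}\le\Delta^+Y^2_{\sigma_k}$; the case $(Y^1_{\sigma_k+}+\Delta^+V_{\sigma_k}-L_{\sigma_k})^-\neq0$ is excluded by showing it would give $\Delta^+A^1_{\sigma_k}\le L_{\sigma_k}-U_{\sigma_k}\le0$, a contradiction. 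You would need to reproduce an argument of this kind rather than appeal to mutual singularity.
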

\begin{proof}
For $i=1,2$ we have
\begin{align*}
Y^i_t&=L_{\sigma_{k+1}}\vee(Y^i_{\sigma_{k+1}+}+\Delta^+V^i_{\sigma_{k+1}})\wedge U_{\sigma_{k+1}}+\int^{\sigma_{k+1}}_t f^i(s,Y^i_r,Z^i_r)\,dr+\int^{\sigma_{k+1}}_t\,dV_r\\
&\quad-\int^{\sigma_{k+1}}_t\,dA^i_r-\int^{\sigma_{k+1}}_t Z^i_r\,dB_r,\quad t\in({\sigma_k},{\sigma_{k+1}}],\quad k=0,\dots,m-1.
\end{align*}
By Proposition \ref{proposition2.2} and Lemma \ref{uw2.0}, $Y^1\le Y^2$, so $L_{\sigma_{k+1}}\vee(Y^1_{\sigma_{k+1}+}+\Delta^+V^1_{\sigma_{k+1}})\wedge U_{\sigma_{k+1}}\le L_{\sigma_{k+1}}\vee(Y^2_{\sigma_{k+1}+}+\Delta^+V^2_{\sigma_{k+1}})\wedge U_{\sigma_{k+1}}$. Now on  all intervals $({\sigma_k},{\sigma_{k+1}}]$ we consider introduced in  \cite{KRzS} penalization schemes for the $\mathrm{\overline{R}}$BSDE($L_{\sigma_{k+1}}\vee(Y^i_{\sigma_{k+1}+}+\Delta^+V^i_{\sigma_{k+1}})\wedge U_{\sigma_{k+1}}$,$f^i+dV$,$U$), $i=1,2$. They have  forms
\begin{align*}
Y^{i,n}_t&=L_{\sigma_{k+1}}\vee(Y^{i,n}_{\sigma_{k+1}+}+\Delta^+V_{\sigma_{k+1}})\wedge U_{\sigma_{k+1}}+\int^{\sigma_{k+1}}_t f^i(s,Y^{i,n}_r,Z^{i,n}_r)\,dr+\int^{\sigma_{k+1}}_t\,dV_r\\
&\quad-n\int^{\sigma_{k+1}}_t(Y^{i,n}_r-U_r)^+\,dr+\sum_{t\le\tau^k_{n,j}<\sigma_{k+1}}(Y^{i,n}_{\tau^k_{n,j}}+\Delta^+V_{\tau^k_{n,j}}-U_{\tau^k_{n,j}})^+\\
&\quad-\int^{\sigma_{k+1}}_t Z^{i,n}_r\,dB_r,\quad t\in({\sigma_k},{\sigma_{k+1}}],\quad k=0,\dots,m-1,\,n\in\mathbb{N},
\end{align*}
where $\{\{\tau^k_{n,j}\}\}$, is an array of stopping times exhausting  the right-side jumps of $U$ and $V$ defined similarly to the array $\{\sigma_{n,i}\}\}$ for  (\ref{eq4.1}).
Set
\begin{equation}\label{eq4.0}
\int^{\sigma_{k+1}}_t\,dA^{i,n}_r:=n\int^{\sigma_{k+1}}_t(Y^{i,n}_r-U_r)^+\,dr-\sum_{t\le\tau^k_{n,j}<\sigma_{k+1}}(Y^{i,n}_{\tau^k_{n,j}}+\Delta^+V_{\tau^k_{n,j}}-U_{\tau^k_{n,j}})^+.
\end{equation}
By Proposition \ref{proposition2.2} and Lemma \ref{uw2.0}, $Y^{1,n}\le Y^{2,n}$. By this and (\ref{eq4.0}), $dA^{1,n}\le dA^{2,n}$. Furthermore, by  \cite[Lemma 4.1]{kl} and  \cite[Theorem 4.1]{KRzS}, $A^{1,n}_{\tau}\rightarrow A^1_{\tau}$, $A^{2,n}_{\tau}\rightarrow A^2_{\tau}$ weakly in $L^1$ for every $\tau\in\Gamma$. Therefore, by the Section Theorem, $dA^1\le dA^2$ on $({\sigma_k},{\sigma_{k+1}}]$. In order to complete the proof we have to show that $\Delta^+A^1_{\sigma_k}\le\Delta^+A^2_{\sigma_k}$, $k=i,\dots,m-1$. If $\Delta^+A^1_{\sigma_k}=0$, then this inequality is obvious. Let $\Delta^+A^1_{\sigma_k}>0$. Note that
\begin{equation}\label{eq4.0.1}
\Delta^+A^1_{\sigma_k}=\Delta^+Y^1_{\sigma_k}
+\Delta^+V_{\sigma_k}+(Y^1_{\sigma_k+}+\Delta^+V_{\sigma_k}-L_{\sigma_k})^-.
\end{equation}
By the the minimality condition, $Y^1_{\sigma_k}=U_{\sigma_k}$. Therefore
\begin{equation}\label{eq4.0.2}
\begin{split}
\Delta^+Y^1_{\sigma_k}=Y^1_{\sigma_k+}-Y^1_{\sigma_k}\le Y^2_{\sigma_k+}-Y^1_{\sigma_k}=Y^2_{\sigma_k+}-U_{\sigma_k}\le Y^2_{\sigma_k+}-Y^2_{\sigma_k}=\Delta^+Y^2_{\sigma_k}.
\end{split}
\end{equation}
If $(Y^1_{\sigma_k+}+\Delta^+V_{\sigma_k}-L_{\sigma_k})^-=0$, then by (\ref{eq4.0.1}) and (\ref{eq4.0.2}),
\begin{align*}
\Delta^+A^1_{\sigma_k}\le \Delta^+Y^2_{\sigma_k}+\Delta^+V_{\sigma_k}&\le \Delta^+Y^2_{\sigma_k}+\Delta^+V_{\sigma_k}
+(Y^2_{\sigma_k+}+\Delta^+V_{\sigma_k}-L_{\sigma_k})^-\\
&=\Delta^+A^2_{\sigma_k}.
\end{align*}
If $(Y^1_{\sigma_k+}+\Delta^+V_{\sigma_k}-L_{\sigma_k})^-\neq 0$, then by (\ref{eq4.0.1}) we have
\[
\Delta^+A^1_{\sigma_k}=-Y^1_{\sigma_k}+L_{\sigma_k}=-U_{\sigma_k}+L_{\sigma_k}\le 0,
\]
which is a contradiction. Hence $\Delta^+A^1_{\sigma_k}\le\Delta^+A^2_{\sigma_k}$, which completes the proof.
\end{proof}

\begin{theorem}\label{r.6}
Let $(\bar{Y}^n,\bar{Z}^n,\bar{A}^n)$, $n\in\mathbb{N}$, be defined by \mbox{\rm(\ref{eq4.1})}.
\begin{enumerate}[\rm(i)]
\item Assume that $p>1$ and \textnormal{(H1)--(H6)} are satisfied. Then $\bar{Y}^n_t\nearrow Y_t$, $t\in[0,T]$, and for every $\gamma\in[1,2)$,
\begin{equation}\label{Tw4.1.0.0}
E\Big(\int^T_0|\bar{Z}^n_r-Z_r|^{\gamma}\,dr\Big)^{p/2}\rightarrow 0,
\end{equation}
where $(Y,Z,R)$ is the unique solution of \textnormal{RBSDE}($\xi$,$f+dV$,$L$,$U$) such that $Y\in\mathcal{S}^p$, $Z\in\mathcal{H}^p$, $R\in\mathcal{S}^p$. Moreover, if $\Delta^-R^+_t=0$ for $t\in(0,T]$, then the above convergence also holds with $\gamma=2$, and moreover, $|\bar Y^n-Y|_p\rightarrow 0$.

\item Assume that  $p=1$ and  \textnormal{(H1)--(H5), (H6*)} and \textnormal{(Z)} are satisfied. Then $\bar{Y}^n_t\nearrow Y_t$, $t\in[0,T]$,  and for all $\gamma\in[1,2)$ and $r\in(0,1)$,
\begin{equation}\label{Tw4.1.0.1}
E\Big(\int^T_0|\bar{Z}^n_r-Z_r|^{\gamma}\,dr\Big)^{{r}/{2}}\rightarrow 0,
\end{equation}
where $(Y,Z,R)$ is the unique solution of \textnormal{RBSDE}($\xi$,$f+dV$,$L$,$U$) such that $Y$ is of class \textnormal{(D)}, $Z\in\mathcal{H}^q$, $R\in\mathcal{V}^1$, $q\in(0,1)$.
Moreover, if $\Delta^-R^+_t=0$ for $t\in(0,T]$, then the above convergence also hold with $\gamma=2$
and $|\bar Y^n-Y|_1\rightarrow 0$.
\end{enumerate}
\end{theorem}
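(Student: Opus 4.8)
The strategy is to wedge the penalization sequence $\bar Y^n$ between two sequences that converge to $Y$: the monotone penalization scheme from \cite{KRzS} for the single-barrier \underline{R}BSDE with lower barrier $L$ (call the solutions $(\underline Y^n,\underline Z^n,\underline A^n)$), and the sequence of solutions of the two-barrier equations with the lower barrier pushed down. More precisely, I would first observe that for each fixed $n$ the process $\bar Y^n$ is, on each subinterval $(\sigma_{n,i-1},\sigma_{n,i}]$, the value of a classical \underline{R}BSDE with upper barrier $U$ and generator $f+n(\cdot-L)^-$; comparing this with the $n$-th penalization of RBSDE$(\xi,f+dV,L,U)$ and with RBSDE$(\xi,(f+dV)_n,L,U)$ — where in the latter only the lower barrier is penalised — gives, via Proposition \ref{proposition2.2}, Lemma \ref{uw2.0} and Proposition \ref{stw.4.1}, the key sandwich
\[
\bar Y^n_t\le Y^{n+1}_t,\qquad \underline Y^n_t\le\bar Y^n_t\le\underline Y^{n+1}_t\ \text{or}\ \bar Y^n_t\le Y_t,\qquad t\in[0,T],
\]
together with monotonicity in $n$. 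The monotonicity $\bar Y^n\nearrow$ follows because increasing $n$ increases the penalty generator $f+n(\cdot-L)^-$, so Proposition \ref{proposition2.2} applies; the uniform upper bound $\bar Y^n\le\tilde X^1$ (or $Y$) comes from the same comparison against the fixed semimartingale majorant furnished by (H6)/(H6*).

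Once the monotone limit $\bar Y:=\lim_n\bar Y^n$ is identified, the plan is to apply Theorem \ref{tw3.2} with $D^n:=K^n$ (the penalty term in \eqref{eq4.4}) and $K^n:=\bar A^n$, which is exactly the decomposition in \eqref{eq4.3}. Verifying the hypotheses (a)--(f) of Theorem \ref{tw3.2} is the technical core: (a) holds since $dK^n\le dK^{n+1}$ by \eqref{eq4.2} and the definition of $K^n$, and $\sup_nE|K^n|_T<\infty$ follows from the uniform estimates on $\bar Y^n$, $\bar Z^n$, $\bar A^n$ proved via \cite[Corollary A.5]{KRzS}; (d) uses the upper/lower envelopes $\underline y,\overline y$ built from (H6)/(H6*); (c) — the bound $\Delta^-\bar A^n\le\Delta^-C$ for a fixed $C\in\VV^{+,1}$ — follows from the formula $\Delta^-\bar A^n_t=(\bar Y^n_t-U_{t-}+\Delta^-V_t)^-$ together with the uniform bounds; (b) is the delicate minimality/$\liminf$ condition. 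For (b) I would argue that, writing $K^n-D^n=-\bar A^n$, the quantity $\int_\sigma^\tau(\bar Y_s-\bar Y^n_s)\,d(-\bar A^n_s)^*+\sum(\bar Y_s-\bar Y^n_s)\Delta^+(-\bar A^n_s)$ is nonnegative in the limit because $\bar Y^n\le\bar Y\le U$ and $\bar A^n$ only increases where $\bar Y^n$ touches $U$, so the integrand is $\le0$ against the negative measure $-d\bar A^n$; here the right-jump terms require the identity $\Delta^+\bar A^n_{\sigma_{n,i}}$ worked out in Proposition \ref{stw.4.1} and the fact that the jump stopping times form an increasing family. Theorem \ref{tw3.2} then yields $\bar Y=Y'$ for some solution $(Y',Z',R')$ of a reflected equation, and one checks that the reflection/minimality condition obtained from the theorem is precisely condition (b) of Definition \ref{def1} for the upper barrier $U$, while the lower-barrier part of the minimality is inherited from the penalisation (the penalty $n\int(\bar Y^n-L)^-\to$ the lower Jordan part of $R$). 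By the uniqueness part of Theorems 2.7--2.9, $(Y',Z',R')=(Y,Z,R)$.

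The convergence of the $Z$-part, \eqref{Tw4.1.0.0}--\eqref{Tw4.1.0.1}, is then exactly the content of \eqref{eq2.2} in Theorem \ref{tw3.2}: it gives $E\int_0^{\tau_k}|\bar Z^n_r-Z_r|^\gamma\,dr\to0$ along a chain $\{\tau_k\}$ for every $\gamma\in(0,2)$, and when $\Delta^-R^+=0$ one has $\Delta^-(\text{the relevant }K)=0$ and the theorem upgrades this to $\gamma=2$. To pass from the chain $\tau_k$ to the full interval $[0,T]$ with the stated $p$-th (resp.\ $q$-th) moment, I would use the uniform $\HH^p$ (resp.\ $\HH^q$) bounds on $\bar Z^n$ together with the a priori estimate on $Z$ from Proposition \ref{prop2.5} and a uniform-integrability / Vitali argument, exactly as in the proof of the corresponding single-barrier result \cite[Theorem 4.1]{KRzS}; for $p=1$ one first replaces $\gamma$ by a slightly larger exponent and uses $Z\in\HH^q$ for all $q<1$. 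Finally, in the case $\Delta^-R^+=0$ the identity $\Delta^-\bar A^n_t=(\bar Y^n_t-U_{t-}+\Delta^-V_t)^-$ and $\bar Y^n\le U$ force $\Delta^-\bar A^n=0$, which combined with the monotone convergence $\bar Y^n\nearrow Y$ and the uniform class-(D) bound gives $|\bar Y^n-Y|_p\to0$ by the $p=1$ Remark after (H6*) and dominated convergence (for $p>1$, by monotone convergence and the bound $\bar Y^n\le\tilde X^1\in\SS^p$). The main obstacle throughout is the careful bookkeeping of the right-side jumps introduced by the arrays $\{\{\sigma_{n,i}\}\}$ when verifying hypothesis (b) of Theorem \ref{tw3.2}: one must show that the extra jump contributions $(\bar Y^n_{\sigma_{n,i}+}+\Delta^+V_{\sigma_{n,i}}-L_{\sigma_{n,i}})^-$ do not spoil the $\liminf\ge0$ inequality, which is handled by noting that these jumps are located at the discontinuity points of $L$ and $V$, that $P(\sigma_{n,j_n}<T)\to0$, and that at such points the relation between $\Delta^+\bar Y^n$ and the penalty jump is the one established in Proposition \ref{stw.4.1}.
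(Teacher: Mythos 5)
Your overall architecture is the right one and coincides with the paper's: use the comparison results (Proposition \ref{proposition2.2}, Lemma \ref{uw2.0}, Proposition \ref{stw.4.1}) to obtain $\bar Y^n\nearrow$ and a dominating solution $\tilde X$ built from the semimartingale in (H6)/(H6*), then pass to the limit via Theorem \ref{tw3.2} and identify the limit through uniqueness. However, the way you feed (\ref{eq4.3}) into Theorem \ref{tw3.2} is wrong. In that theorem $D^n$ is the \emph{datum} that must satisfy $dD^n\le dD^{n+1}$, while $K^n$ is the \emph{unknown} increasing process whose limit the theorem extracts. Matching signs in (\ref{eq4.3}), the penalty term (\ref{eq4.4}) must play the role of the theorem's $K^n$, and one must take $D^n=\bar A^n-V$; the required monotonicity $dD^n\le dD^{n+1}$ is then precisely $d\bar A^n\le d\bar A^{n+1}$, i.e.\ Proposition \ref{stw.4.1}. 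You swap these roles and justify hypothesis (a) by asserting $dK^n\le dK^{n+1}$ for the penalty terms ``by (\ref{eq4.2})''. This is false: since $\bar Y^{n+1}\ge\bar Y^n$, the right jumps $(\bar Y^n_{\sigma_{n,i}+}+\Delta^+V_{\sigma_{n,i}}-L_{\sigma_{n,i}})^-$ are \emph{nonincreasing} in $n$, and the densities $n(\bar Y^n_r-L_r)^-$ are not comparable for different $n$; the inclusion (\ref{eq4.2}) only concerns the supports of the jump times. So hypothesis (a) fails for your choice and the application of Theorem \ref{tw3.2} collapses. (Your bookkeeping is also internally inconsistent: with your assignment $K^n-D^n$ equals $\bar A^n$ minus the penalty term, not $-\bar A^n$.)

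Second, even after Theorem \ref{tw3.2} is correctly applied, it yields only the semimartingale decomposition of the limit $Y'$; it gives neither $Y'\ge L$ nor any minimality condition. You dispose of these with ``the lower-barrier part of the minimality is inherited from the penalisation'', but these are substantial separate arguments: the $\mathbb{L}^1$-boundedness of $n\int_0^T(\bar Y^n_r-L_r)^-\,dr$ only gives $Y'_{t+}\ge L_{t+}$, and upgrading this to $Y'_t\ge L_t$ for every $t$ requires a case analysis on the signs of $\Delta^+(L_t+V_t)$ and $\Delta^+A_t$; the minimality of the limit $A$ must be obtained by passing to the limit in the minimality condition for $\bar A^n$ (via the Vitali--Hahn--Saks theorem); and the minimality of $K$ comes from the optimal-stopping representation of $\bar Y^n$ through \cite[Proposition 3.13]{KRzS} and \cite[Lemma 3.19]{KRzS}, not from Theorem \ref{tw3.2}. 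Finally, your last step is also incorrect as stated: pointwise monotone convergence $\bar Y^n_t\nearrow Y_t$ together with an $\mathcal{S}^p$ bound does not give $E\sup_{t\le T}|\bar Y^n_t-Y_t|^p\to0$; one needs uniform convergence of the trajectories, which requires a generalized Dini theorem after proving $\Delta^-\bar Y^n_t\to\Delta^-Y_t$ and $\Delta^+\bar Y^n_t\to\Delta^+Y_t$, and the hypothesis $\Delta^-R^+=0$ enters the It\^o-type estimate that converts $|\bar Y^n-Y|_p\to0$ into the $\gamma=2$ convergence of $\bar Z^n$, not into killing $\Delta^-\bar A^n$.
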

\begin{proof}
Without loss of generality we may assume that $\mu=0$.

{\bf Step 1.} We show that for every $n\in\mathbb{N}$ the triple $(\bar{Y}^n,\bar{Z}^n,K^n-\bar{A}^n)$ is a solution of RBSDE($\xi$,$f+dV$,$L^n$,$U$) with $L^n=L-(\bar{Y}^n-L)^-=L\wedge \bar{Y}^n$. It is clear that $\bar{Y}^n_t\ge L^n_t$, $t\in[0,T]$. We also have
\[
\int_0^T (\bar{Y}^n_{r-}-L^n_{r-})\,dK^{n,*}_r=n\int^T_0 (\bar{Y}^n_r-L^n_r)(\bar{Y}^n_r-L_r)^-\,dr=n\int^T_0 (\bar{Y}^n_r-L_r)^+(\bar{Y}^n_r-L_r)^-\,dr=0
\]
and
\begin{align*}
\sum_{r<T}(\bar{Y}^n_r-L^n_r)\Delta^+K^n_r&=\sum_{\sigma_{n,i}<T}(\bar{Y}^n_{\sigma_{n,i}}-L^n_{\sigma_{n,i}})(\bar{Y}^n_{\sigma_{n,i}+}+\Delta^+V_{\sigma_{n,i}}-L_{\sigma_{n,i}})^-\\
&=\sum_{\sigma_{n,i}<T}(\bar{Y}^n_{\sigma_{n,i}}-L_{\sigma_{n,i}})^+(\bar{Y}^n_{\sigma_{n,i}+}+\Delta^+V_{\sigma_{n,i}}-L_{\sigma_{n,i}})^-=0.
\end{align*}
We will justify the last equality. Striving for  contradiction, suppose that
\begin{align*}
\sum_{\sigma_{n,i}<T}(\bar{Y}^n_{\sigma_{n,i}}
-L_{\sigma_{n,i}})^+(\bar{Y}^n_{\sigma_{n,i}+}+\Delta^+V_{\sigma_{n,i}}
-L_{\sigma_{n,i}})^-\neq 0.
\end{align*}
Then there exists $i\in\{1,\ldots,k_n\}$ such that $\bar{Y}^n_{\sigma_{n,i}}-L_{\sigma_{n,i}}>0$ and $\bar{Y}^n_{\sigma_{n,i}+}+\Delta^+V_{\sigma_{n,i}}-L_{\sigma_{n,i}}<0$. By the last inequality and (\ref{eq4.3}), $\Delta^+\bar{Y}^n_{\sigma_{n,i}}=\Delta^+K^n_{\sigma_{n,i}}-\Delta^+V_{\sigma_{n,i}}=(\bar{Y}^n_{\sigma_{n,i}+}+\Delta^+V_{\sigma_{n,i}}-L_{\sigma_{n,i}})-\Delta^+V_{\sigma_{n,i}}$. Hence $\bar{Y}^n_{\sigma_{n,i}}=L_{\sigma_{n,i}}$, which is a contradiction.

{\bf Step 2.} We will show that there exists a process $Z'\in \HH$ and a chain $\{\tau_k\}$ such that
\begin{equation}\label{eq.kled.ert}
E\int^{\tau_k}_0|\bar{Z}^n_s-Z'_s|^{\gamma}\,ds\rightarrow 0,\quad\gamma\in[1,2).
\end{equation}
Moreover, we will show that if $p>1$, then   $Z'\in \mathcal{H}^p$  and (\ref{Tw4.1.0.0}) holds with $Z$ replaced by $Z'$, and if  $p=1$, then  $Z'\in \mathcal{H}^q$, $q\in (0,1)$, and
(\ref{Tw4.1.0.1}) holds with $Z$ replaced by $Z'$.
To show this we will use \cite[Lemma 4.2]{kl}. If $p>1$, then by (H6) there exists $X\in\mathcal{M}_{loc}+\mathcal{V}^p$, $X\in\mathcal{S}^p$ such that $X\ge L$ and $\int^T_0 f^-(s,X_s,0)\,ds\in \mathbb L^p$. If $p=1$, then by (H6*) there exists $X\in\mathcal{M}_{loc}+\mathcal{V}^1$ of class (D) such that  $X\ge L$ and $\int^T_0 f^-(s,X_s,0)\,ds\in \mathbb L^1$. Since the Brownian fitration has the representation property, there exist processes $H\in\mathcal{M}_{loc}$ and $C\in\mathcal{V}^p$ such that
\[
X_t=X_T-\int^T_t\,dC_s-\int^T_t H_s\,dB_s,\quad t\in[0,T].
\]
Set $X'_t=X_t$, $t\in[0,T)$, $X'_T=\xi$. Then  for some $A',K'\in\mathcal{V}^{+,p}$ we have that
\[
X'_t=\xi+\int^T_t f(s,X'_s,H_s)\,ds+\int^T_t\,dV_s+\int^T_t\,dK'_s-\int^T_t\,dA'_s-\int^T_t H_s\,dB_s,\quad t\in[0,T].
\]
Let $(\hat{X}^n,\hat{H}^n,\hat{A}^n)$ be a solution of the  $\mathrm{\overline{R}BSDE}$
\begin{align}\label{Tw4.1.0}
\nonumber
\hat{X}^n_t&=\xi+\int^T_t f(s,\hat{X}^n_s,\hat{H}^n_s)\,ds+\int^T_t\,dV_s+\int^T_t\,dK'_s-\int^T_t\,d\hat{A}^n_s-\int^T_t \hat{H}^n_s\,dB_s\\
&\quad+\sum_{t\le\sigma_{n,i}<T}(\hat{X}^n_{\sigma_{n,i}+}
+\Delta^+V_{\sigma_{n,i}}-L_{\sigma_{n,i}})^-,\quad t\in[0,T],
\end{align}
with upper barrier $U$, such that if $p>1$, then $\hat{X}^n\in\mathcal{S}^p$, $\hat{H}^n\in\mathcal{H}^p$, $\hat{A}^n\in\mathcal{V}^{+,p}$, and if $p=1$, then $\hat{X}^n$ is of class \textnormal{(D)}, $\hat{H}^n\in\mathcal{H}^q$, $q\in(0,1)$, $\hat{A}^n\in\mathcal{V}^{+,1}$. The existence of such solution  follows from \cite[Theorem 3.20]{KRzS}. Note that $(X',H,A')$ is a solution of $\rm{\overline{R}}$BSDE$(\xi,f+dV+dK',X')$.
Since $X'\le U$, by Proposition \ref{proposition2.2} and Lemma \ref{uw2.0}, $\hat{X}^n\ge X'\ge L$. Thanks to this, we may rewrite (\ref{Tw4.1.0}) in the  form
\begin{align*}
\hat{X}^n_t&=\xi+\int^T_t f(s,\hat{X}^n_s,\hat{H}^n_s)\,ds+\int^T_t\,dV_s+\int^T_t\,dK'_s
-\int^T_t\,d\hat{A}^n_s+n\int^T_t(\hat{X}^n_s-L_s)^-\,ds\\
&\quad+\sum_{t\le\sigma_{n,i}<T}(\hat{X}^n_{\sigma_{n,i}+}+\Delta^+V_{\sigma_{n,i}}-L_{\sigma_{n,i}})^--\int^T_t \hat{H}^n_s\,dB_s,\quad t\in[0,T].
\end{align*}
By Proposition \ref{proposition2.2} and Lemma \ref{uw2.0} again,
\begin{equation}\label{Tw4.1.1.0}
\hat{X}^n\ge \bar{Y}^n,
\end{equation}
and by Proposition \ref{stw.4.1},
\begin{equation}\label{Tw4.1.1}
d\hat{A}^n\ge d\bar{A}^n,\quad n\ge 1.
\end{equation}
Moreover,
\begin{align}\label{Tw4.1.2}
\nonumber
(\hat{X}^n_{\sigma_{n,i}+}+\Delta^+V_{\sigma_{n,i}}-L_{\sigma_{n,i}})^-&\le (X'_{\sigma_{n,i}+}+\Delta^+V_{\sigma_{n,i}}-L_{\sigma_{n,i}})^-\\\nonumber
&=(\Delta^+X'_{\sigma_{n,i}}+\Delta^+V_{\sigma_{n,i}}+X'_{\sigma_{n,i}}
-L_{\sigma_{n,i}})^-\nonumber\\
&\le(\Delta^+X_{\sigma_{n,i}}+\Delta^+V_{\sigma_{n,i}})^- \nonumber \\
&\le\Delta^+|C|_{\sigma_{n,i}}+\Delta^+|V|_{\sigma_{n,i}}.
\end{align}
Let $(\tilde{X},\tilde{H},\tilde{A})$ be a solution of the following $\mathrm{\overline{R}BSDE}$
\begin{align*}
\tilde{X}_t&=\xi+\int^T_t f(s,\tilde{X}_s,\tilde{H}_s)\,ds+\int^T_t\,dV_s
+\int^T_t\,dK'_s-\int^T_t\,d\tilde{A}_s+n\int^T_t(\tilde{X}_s-L_s)^-\,ds\\
&\quad+\int^T_t\,d|C|_s+\int^T_t\,d|V|_s-\int^T_t \tilde{H}_s\,dB_s,\quad t\in[0,T],
\end{align*}
with upper barrier $U$, such that if $p>1$, then $\tilde{X}\in\mathcal{S}^p$, $\tilde{H}\in\mathcal{H}^p$, $\tilde{A}\in\mathcal{V}^{+,p}$, and if $p=1$, then $\tilde{X}$ is of class \textnormal{(D)}, $\tilde{H}\in\mathcal{H}^q$, $q\in(0,1)$, $\tilde{A}\in\mathcal{V}^{+,1}$. The existence of the solution follows from  \cite[Theorem 3.20]{KRzS}. The triple $(\tilde{X},\tilde{H},\tilde{A})$ does not depend on $n$, because by Proposition \ref{proposition2.2} and Lemma \ref{uw2.0}, $\tilde{X}\ge\hat{X}^n$, so the term involving $n$ on the right-hand side of the above equation equals zero. By the last inequality and (\ref{Tw4.1.1.0}),
\begin{equation}\label{Tw4.1.1.3}
\tilde{X}\ge \bar{Y}^n.
\end{equation}
By Proposition \ref{stw.4.1}, $d\hat{A}^n\le d\tilde{A}$, which by (\ref{Tw4.1.1}) implies that
\begin{equation}\label{Tw4.1.4}
d\bar{A}^n\le d\tilde{A}.
\end{equation}
By Proposition \ref{proposition2.2} and Lemma \ref{uw2.0},
\begin{equation}\label{wt.1}
\bar{Y}^n\le \bar{Y}^{n+1}.
\end{equation}
From this, (\ref{Tw4.1.1.3}), (\ref{Tw4.1.4}) and  \cite[Lemma 4.2]{kl}, if $p>1$ , then
\begin{align}\label{Tw.4.1}
\nonumber
E(K^n_T)^p+E\Big(\int^T_0|\bar{Z}^n_s|^2\,ds\Big)^{\frac{p}{2}}&\le CE\Big(\sup_{t\le T}(|Y^1_t|^p+|\tilde{X}_t|^p)+\Big(\int^T_0\,d|V|_s\Big)^p\\\nonumber
&\quad+\Big(\int^T_0|f^-(s,\tilde{X}_s,0)|\,ds\Big)^p+\Big(\int^T_0\tilde{X}^+_s\,ds\Big)^p\\
&\quad+\Big(\int^T_0|f(s,0,0)|\,ds\Big)^p+\Big(\int^T_0\,d\tilde{A}_s\Big)^p\Big),
\end{align}
and if $p=1$, then for every $q\in (0,1)$,
\begin{align}\label{Tw.4.3}
\nonumber
E\Big(\int^T_0|\bar{Z}^n_s|^2\,ds\Big)^{{q}/{2}}&\le CE\Big(\sup_{t\le T}(|Y^1_t|^q+|\tilde{X}_t|^q)+\Big(\int^T_0|f(s,0,0)|\,ds\Big)^q\\\nonumber
&\quad+\Big(\int^T_0|f^-(s,\tilde{X}_s,0)|\,ds\Big)^q+\Big(\int^T_0\tilde{X}^+_s\,ds\Big)^q\\
&\quad+\Big(\int^T_0\,d|V|_s\Big)^q+\Big(\int^T_0\,d\tilde{A}_s\Big)^q\Big).
\end{align}
Now we will apply Theorem \ref{tw3.2} to (\ref{eq4.1}). We know that $\bar{Y}^n$ is of class (D), $\bar{Z}^n\in\mathcal{H}$, $K^n\in\mathcal{V}^+$, $\bar{A}^n\in\mathcal{V}^+$ and $t\mapsto f(t,\bar{Y}^n_t,\bar{Z}^n_t)\in \mathbb{L}^1(0,T)$ and $V$ is a finite variation process. By Proposition \ref{stw.4.1},  $d\bar A^n\le d\bar{A}^{n+1}$, $n\in\mathbb{N}$. Let $Y'_t=\sup_{n\ge 1}\bar{Y}^n_t$, $A_t=\lim_{n\rightarrow\infty}\bar{A}^n_t$, $t\in[0,T]$ and $D^n:=\bar{A}^n-V$. We will check assumptions (a)--(f) of Theorem \ref{tw3.2}.
\begin{enumerate}[(a)]
\item We have shown that $d\bar{A}^n\le d\bar{A}^{n+1}$ and $d\bar{A}^n\le d\tilde{A}$, $n\ge 1$. Hence $dD^n\le dD^{n+1}$ and $\sup_{n\ge 1}E|D^n|_T<\infty$.
\item Let $\tau,\sigma\in\Gamma$ be stopping times such that $\sigma\le\tau$. By (\ref{Tw4.1.4}),
\begin{align*}
&\liminf_{n\rightarrow\infty}\Big(\int^{\tau}_{\sigma}(Y'_s-\bar{Y}^n_s)\,d(K^n_s-D^n_s)+\sum_{\sigma\le s<\tau}(Y'_s-\bar{Y}^n_s)\Delta^+(K^n_s-D^n_s)\Big)\\
&\quad\ge-\lim_{n\rightarrow\infty}\Big(\int^{\tau}_{\sigma}(Y'_s-\bar{Y}^n_s)\,(d\tilde{A}_s-dV_s)+\sum_{\sigma\le s<\tau}(Y'_s-\bar{Y}^n_s)(\Delta^+\bar{A}^n_s-\Delta^+V_s)\Big).
\end{align*}
By the Lebesgue dominated convergence theorem,
\[
\lim_{n\rightarrow\infty}\Big(\int^{\tau}_{\sigma}(Y'_s-\bar{Y}^n_s)\,(d\tilde{A}_s-dV_s)+\sum_{\sigma\le s<\tau}(Y'_s-\bar{Y}^n_s)(\Delta^+\tilde{A}_s-\Delta^+V_s)\Big)=0.
\]
Therefore $\liminf_{n\rightarrow
\infty}\int^{\tau}_{\sigma}(Y'_s-\bar{Y}^n_s)\,d(K^n_s-D^n_s)\ge 0$.
\item It is easy to see that $\Delta^-K^n_t=0$, $n\in\mathbb{N}$, $t\in[0,T]$.
\item Let $\bar{y}=Y^1$ and $\underline{y}=\tilde{X}$. Then $\bar{y},\underline{y}\in\mathcal{V}^1+\mathcal{M}_{loc}$, $\bar{y},\, \underline{y}$ are of class (D) and by (H6), (H6*) and Proposition \ref{prop2.5},
\[
E\int^T_0 f^+(s,\bar{y}_s,0)\,ds+E\int^T_0 f^-(s,\underline{y}_s,0)\,ds<\infty.
\]
By (\ref{Tw4.1.1.3}), $\bar{y}_t\le \bar{Y}^n_t\le\underline{y}_t$, $t\in[0,T]$.
\item It follows from (H3).
\item By the definition of $Y'$, $\bar{Y}^n_t\nearrow Y'_t$, $t\in[0,T]$.
\end{enumerate}
By Theorem \ref{tw3.2}, $Y'$ is regulated and there exist processes $K\in\mathcal{V}^+$ and $Z'\in \mathcal{H}$ such that
\begin{equation}
\label{eq.main}
Y'_t=\xi+\int^T_tf(s,Y'_s,Z'_s)\,ds+\int^T_t\,dV_s+\int^T_t\,dK_s-\int^T_t\,dA_s-\int^T_t Z'_s\,dB_s,\quad t\in[0,T].
\end{equation}
Moreover $\bar{Z}^n\rightarrow Z'$ in the sense of (\ref{eq2.2}).  This when combined with (\ref{Tw.4.1}) and (\ref{Tw.4.3}) implies that if $p>1$, then $Z'\in \mathcal{H}^p$ and (\ref{Tw4.1.0.0}) is satisfied and if $p=1$, then $Z'\in \mathcal{H}^q,\, q\in (0,1)$, (\ref{Tw4.1.0.1}) holds, and there exists a chain $\{\tau_k\}\subset \Gamma$ such that (\ref{eq.kled.ert}) is satisfied.

{\bf Step 3.} We will show that $EK^p_T+EA^p_T<\infty$. The desired integrability of $A$ follows from the integrability of $\tilde{A}$ and (\ref{Tw4.1.4}). To prove that $EK^p_T<\infty$, we show that
\begin{equation}\label{Tw.4.4}
\sup_{n\ge 1}E\Big(\int^T_0 |f(s,\bar{Y}^n_s,\bar{Z}^n_s)|\,ds\Big)^p+E\Big(\int^T_0 |f(s,Y'_s,Z'_s)|\,ds\Big)^p<\infty.
\end{equation}
If $p>1$, then by (H1), (H2), (\ref{Tw4.1.1.3}) and (\ref{wt.1})
\begin{align*}
&E\Big(\int^T_0|f(s,\bar{Y}^n_s,\bar{Z}^n_s)|\,ds\Big)^p\le C_p\Big(E\Big(\int^T_0|f(s,\tilde{X}_s,0)|\,ds\Big)^p\\
&\quad+E\Big(\int^T_0|f(s,Y^1_s,0)|\,ds\Big)^p
+E\Big(\int^T_0|\bar{Z}^n_s|^2\,ds\Big)^{{p}/{2}}\Big).
\end{align*}
If $p=1$, then by (Z),
\begin{align*}
E\int^T_0 |f(s,\bar{Y}^n_s,\bar{Z}^n_s)|\,ds&\le \gamma E\int^T_0 (g_s+|\bar{Y}^n_s|+|\bar{Z}^n_s|)^{\alpha}\,ds+E\int^T_0 |f(s,\bar{Y}^n_s,0)|\,ds.
\end{align*}
By $\mathrm{H\ddot{o}lder's}$ inequality, (H2), (\ref{Tw4.1.1.3}) and (\ref{wt.1}),
\begin{align*}
&E\int^T_0 (g_s+|\bar{Y}^n_s|+|\bar{Z}^n_s|)^{\alpha}\,ds+E\int^T_0 |f(s,\bar{Y}^n_s,0)|\,ds\\
&\le C\Big\{E\Big(\int^T_0|\bar{Z}^n_s|^2\,ds\Big)^{{\alpha}/{2}}+E\int^T_0 (g_s+|\tilde{X}_s|+|Y^1_s|)^{\alpha}\,ds\\
&\quad+E\int^T_0 |f(s,Y^1_s,0)|+|f(s,\tilde{X}_s,0)|\,ds\Big\}.
\end{align*}
Applying Fatou's lemma and using  (\ref{Tw.4.1}) and  (\ref{Tw.4.3}) we get (\ref{Tw.4.4}). The desired integrability of $K$ follows from (\ref{Tw.4.4}) and the integrability  of $Y',Z'$  and $A,V$.

{\bf Step 4.} We show that the minimality condition for $A$ is satisfied, i.e.
\begin{equation}\label{wt.2}
\int^T_0(U_{t-}-Y'_{t-})\,dA^*_t+\sum_{t<T}(U_t-Y'_t)\Delta^+A_t=0.
\end{equation}
Since the triple $(\bar{Y}^n,\bar{Z}^n,\bar{A}^n)$ is a solution of (\ref{eq4.1}), we have
\begin{equation}\label{wt.3}
\int^T_0(U_{t-}-\bar{Y}^n_{t-})\,dA^{n,*}_t+\sum_{t<T}(U_t-\bar{Y}^n_t)\Delta^+\bar{A}^n_t=0.
\end{equation}
By the Vitali--Hahn--Saks theorem, $d\bar{A}^n\nearrow dA$ in the variation norm, i.e.
\begin{equation}\label{wt.4}
\Delta^+\bar{A}^n_t\nearrow\Delta^+A_t,\quad\Delta^-\bar{A}^n_t\nearrow\Delta^-A_t,\quad|dA^{n,*,c}-dA^{*,c}|_{TV}\rightarrow 0.
\end{equation}
Letting $n\rightarrow\infty$ in the second term of (\ref{wt.3}) and applying  the Lebesgue dominated convergence theorem we obtain
\[
\sum_{t<T}(U_t-Y'_t)\Delta^+A_t=0.
\]
Since $|dA^{n,*,c}-dA^{*,c}|_{TV}\rightarrow 0$ and $0\le U_t-\bar{Y}^n_t\le U_t-Y^1_t$, using (\ref{wt.3}) and the Lebesgue dominated convergence theorem we get $\int^T_0(U_t-Y'_t)\,dA^{*,c}_t=0$. If $\Delta^-A^*_t=0$, then $(U_{t-}-Y'_{t-})\Delta^-A^*_t=0$. If $\Delta^-A_t^*>0$, then by (\ref{wt.4}) there exists $N\in\mathbb{N}$ such that $\Delta^-A_t^{n,*}>0$ for $n\ge N$. By this and (\ref{wt.3}), $\bar{Y}^n_{t-}=U_{t-}$ for $n\ge N$. By Proposition \ref{proposition2.2} and Lemma \ref{uw2.0}, $Y'_{t-}\ge \bar{Y}^n_{t-}=U_{t-}$\,, so $Y'_{t-}=U_{t-}$. Therefore
\[
\sum_{t\le T}(U_{t-}-Y'_{t-})\Delta^-A^*_t=0.
\]

{\bf Step 5.} We will show that $Y'\ge L$. By (\ref{Tw4.1.1.3}), (\ref{Tw4.1.4}) and (\ref{Tw.4.4}), $\sup_{n\ge 1}EK^n_T<\infty$, so   $\Big\{n\int^T_0(\bar{Y}^n_s-L_s)^-\,ds\Big\}$ is bounded in $\mathbb{L}^1(\Omega)$. Therefore, passing to a subsequence if necessary, we may assume that there exists a dense countable subset $Q\subset [0,T]$ such that for $P$-a.e. $\omega\in\Omega$, $(\bar{Y}^n_t-L_t)^-\rightarrow 0$ for $t\in Q$. Consequently, $Y'_t\ge L_t$ for $t\in Q$. Hence $Y'_{t+}\ge L_{t+},\, t\in [0,T]$. We will show that $Y'_t\ge L_t$ for every $t\in[0,T)$. Let $t\in[0,T)$. Assume that $\Delta^+(L_t+V_t)\ge 0$. If $\Delta^+A_t>0$, then $Y'_t=U_t$, so obviously $Y'_t\ge L_t$. In case $\Delta^+A_t=0$,  we have  $\Delta^+Y'_t=-\Delta^+V_t-\Delta^+K_t$. Therefore
\begin{align*}
Y'_t+V_t=-(\Delta^+V_t+\Delta^+Y'_t)+Y'_{t+}+V_{t+}\ge L_{t+}+V_{t+}\ge L_t+V_t,
\end{align*}
so $Y'_t\ge L_t$. Assume now that $\Delta^+(L_t+V_t)<0$. If $\Delta^+A_t>0$, then $Y'_t=U_t$, so $Y'_t\ge L_t$. If $\Delta^+A_t=0$, then by (\ref{wt.4}), $\Delta^+\bar{A}^n_t=0$, $n\ge 1$. Since $\Delta^+(L_t+V_t)<0$, $t\in\bigcup_i[[\sigma_{n,i}]]$ for sufficiently large $n$. Hence  $\Delta^+K^n_t=(\bar{Y}^n_{t+}+\Delta^+V_t-L_t)^-$. By this and (\ref{eq4.1}),
\[
\Delta^+\bar{Y}^n_t=-\Delta^+V_t-(\bar{Y}^n_{t+}+\Delta^+V_t-L_t)^-.
\]
Suppose that  $\bar{Y}^n_t<L_t$. Then
\begin{align*}
\bar{Y}^n_{t+}-L_t+\Delta^+V_t<\bar{Y}^n_{t+}-\bar{Y}^n_t+\Delta^+V_t=-(\bar{Y}^n_{t+}+\Delta^+V_t-L_t)^-.
\end{align*}
Consequently, $\bar{Y}^n_{t+}+\Delta^+V_t-L_t<-(\bar{Y}^n_{t+}+\Delta^+V_t-L_t)^-$, which is a contradiction. Thus $\bar{Y}^n_t\ge L_t$, so $Y'_t\ge L_t$. Therefore
\[
Y'_t\ge L_t\mathbf{1}_{\{t<T\}}+\xi\mathbf{1}_{\{t=T\}},\quad t\in[0,T].
\]

{\bf Step 6.} We will show the minimality condition for $K$, i.e. we show that
\begin{equation}\label{wt.5}
\int^T_0(Y'_{r-}-L_{r-})\,dK^*_r+\sum_{r<T}(Y'_r-L_r)\Delta^+K_r=0.
\end{equation}
By (\ref{eq.main}), (\ref{Tw.4.4}) and the integrability properties of $Y',V$ and $A$, the  process 
\[Y'+\int^{\cdot}_0 f(s,Y'_s,Z'_s)\,ds-V+A\] is a supermartingale which majorizes the process $L\mathbf{1}_{\{\cdot<T\}}+\xi\mathbf{1}_{\{\cdot=T\}}+\int^{\cdot}_0 f(s,Y'_s,Z'_s)\,ds-V+A$. Hence
\begin{equation}\label{Tw.4.5}
Y'_t\ge \esssup_{\tau\in\Gamma_t}E\Big(\int^{\tau}_t f(s,Y'_s,Z'_s)\,ds+\int^{\tau}_t\,dV_s-\int^{\tau}_t\,dA_s+L_{\tau}\mathbf{1}_{\{\tau<T\}}
+\xi\mathbf{1}_{\{\tau=T\}}|\mathcal{F}_t\Big).
\end{equation}
Let $p>1$. By \cite[Proposition 3.13]{KRzS}, Step 1 and the definition of $L^n$, for $t\in[0,T]$ we have
\begin{align}\label{wt.6}
\nonumber
\bar{Y}^n_t&=\esssup_{\tau\in\Gamma_t}E\Big(\int^{\tau}_t f(s,\bar{Y}^n_s,\bar{Z}^n_s)\,ds+\int^{\tau}_t\,dV_s-\int^{\tau}_t\,d\bar{A}^n_s
+L^n_{\tau}\mathbf{1}_{\{\tau<T\}}+\xi\mathbf{1}_{\{\tau=T\}}|\mathcal{F}_t\Big)\\\nonumber
&\le \esssup_{\tau\in\Gamma_t}E\Big(\int^{\tau}_t f(s,\bar{Y}^n_s,\bar{Z}^n_s)\,ds+\int^{\tau}_t\,dV_s\\
&\qquad\qquad\qquad\qquad\qquad-\int^{\tau}_t\,d\bar{A}^n_s+L_{\tau}\mathbf{1}_{\{\tau<T\}}+\xi\mathbf{1}_{\{\tau=T\}}|\mathcal{F}_t\Big).
\end{align}
Observe that by (\ref{Tw4.1.0.0}), (\ref{Tw.4.4}) and the assumptions on $f$,
\begin{equation}\label{wt.7}
E\int^T_0|f(r,\bar{Y}^n_r,\bar{Z}^n_r)-f(r,Y'_r,Z'_r)|\,dr\rightarrow 0.
\end{equation}
By (\ref{wt.4}), (\ref{wt.6}), (\ref{wt.7}) and \cite[Lemma 3.19]{KRzS},
\begin{equation*}
Y'_t\le \esssup_{\tau\in\Gamma_t}E\Big(\int^{\tau}_t f(s,Y'_s,Z'_s)\,ds+\int^{\tau}_t\,dV_s-\int^{\tau}_t\,dA_s+L_{\tau}\mathbf{1}_{\tau<T}+\xi\mathbf{1}_{\tau=T}|\mathcal{F}_t\Big).
\end{equation*}
This when combined with (\ref{Tw.4.5}) gives
\begin{equation*}
Y'_t=\esssup_{\tau\in\Gamma_t}E\Big(\int^{\tau}_t f(s,Y'_s,Z'_s)\,ds+\int^{\tau}_t\,dV_s-\int^{\tau}_t\,dA_s+L_{\tau}\mathbf{1}_{\tau<T}+\xi\mathbf{1}_{\tau=T}|\mathcal{F}_t\Big).
\end{equation*}

Let $p=1$. Since $Y^1\le \bar{Y}^n\le Y'$, $n\ge 1$, using (H2) we get
\[
f(t,Y'_t,0)\le f(t,\bar{Y}^n_t,0)\le f(t,Y^1_t,0),\quad t\in[0,T].
\]
Define $\sigma_k=\inf\{t\ge 0:\int^t_0|f(r,Y^1_r,0)|+|f(r,Y'_r,0)|\,dr\ge k\}\wedge T$.
It is clear  that $\{\sigma_k\}$ is a chain. We may assume that $\sigma_k=\tau_k$. Observe that by (\ref{Tw4.1.0.1}), the definition of $\sigma_k$ and the assumptions on $f$,
\begin{equation}\label{wt.9}
E\int^{\tau_k}_0|f(r,\bar{Y}^n_r,\bar{Z}^n_r)-f(r,Y_r,Z_r)|\,dr\rightarrow 0.
\end{equation}
By \cite[Proposition 3.13]{KRzS}, Step 1, (f) and the definition of $L^n$, for $t\in[0,\tau_k]$ we have
\begin{align*}
\nonumber
\bar{Y}^n_t&=\esssup_{\tau \in\Gamma_t,\, \tau_k\ge\tau}E\Big(\int^{\tau}_t f(s,\bar{Y}^n_s,\bar{Z}^n_s)\,ds+\int^{\tau}_t\,dV_s
-\int^{\tau}_t\,d\bar{A}^n_s+L^n_{\tau}\mathbf{1}_{\{\tau<\tau_k\}}
+\bar{Y}^n_{\tau_k}\mathbf{1}_{\{\tau=\tau_k\}}|\mathcal{F}_t\Big)\\
&\le \esssup_{\tau \in\Gamma_t, \tau_k\ge\tau}E\Big(\int^{\tau}_t f(s,\bar{Y}^n_s,\bar{Z}^n_s)\,ds+\int^{\tau}_t\,dV_s-\int^{\tau}_t\,d\bar{A}^n_s
+L_{\tau}\mathbf{1}_{\{\tau<\tau_k\}}+Y'_{\tau_k}\mathbf{1}_{\{\tau=\tau_k\}}|\mathcal{F}_t\Big).
\end{align*}
By this, (\ref{wt.4}), (\ref{wt.9}) and \cite[Lemma 3.19]{KRzS},
\begin{equation*}
Y'_t\le \esssup_{\tau \in\Gamma_t,\, \tau_k\ge\tau}E\Big(\int^{\tau}_t f(s,Y'_s,Z'_s)\,ds+\int^{\tau}_t\,dV_s-\int^{\tau}_t\,dA_s+L_{\tau}\mathbf{1}_{\{\tau<\tau_k\}}
+Y'_{\tau_k}\mathbf{1}_{\{\tau=\tau_k\}}|\mathcal{F}_t\Big).
\end{equation*}
This when combined with (\ref{Tw.4.5}) gives
\begin{equation*}
Y'_t=\esssup_{\tau \in\Gamma_t,\, \tau_k\ge\tau}E\Big(\int^{\tau}_t f(s,Y'_s,Z'_s)\,ds+\int^{\tau}_t\,dV_s-\int^{\tau}_t\,dA_s+L_{\tau}\mathbf{1}_{\tau<\tau_k}+Y'_{\tau_k}\mathbf{1}_{\tau=\tau_k}|\mathcal{F}_t\Big).
\end{equation*}
By \cite[Corollary 3.11]{KRzS} we have (\ref{wt.5}) satisfied on $[0,\tau_k]$, and since $\{\tau_k\}$ is a chain, we have it also on $[0,T]$.

{\bf Step 7.} We will show that $(Y',Z',K-A)=(Y,Z,R)$.
Put $R'=K-A$. Obviously $dR^{'+}\le dK$ and  $dR^{'-}\le dA$, so by (\ref{wt.2}) and (\ref{wt.5}),
\[
\int^T_0 (Y_{r-}-L_{r-})\,dR'^{+,*}_r+\sum_{r<T}(Y_r-L_r)\Delta^+R'^+_r=0
\]
and
\[
\int^T_0 (U_{r-}-Y_{r-})\,dR'^{-,*}_r+\sum_{r<T}(U_r-Y_r)\Delta^+R'^-_r=0.
\]
Consequently, the triple $(Y',Z',R')$ is a solution  of RBSDE$(\xi,f+dV,L,U)$ such that $Y'\in\mathcal{S}^p$, $Z'\in\mathcal{H}^p$, $R'\in\mathcal{S}^p$  in case $p>1$, and in case $p=1$
$Y'$ is of class \textnormal{(D)}, $Z'\in\mathcal{H}^q$, $q\in(0,1)$, $R'\in\mathcal{V}^1$ in case $p=1$. Hence,
by Proposition \ref{proposition2.2} and Lemma \ref{uw2.0},  $(Y',Z',K-A)=(Y,Z,R)$.

{\bf Step 8.} We  will show that if $\Delta^-R^+=0$, then  (\ref{Tw4.1.0.0}), (\ref{Tw4.1.0.1}) hold with $\gamma=2$ and $|\bar Y^n-Y|_p\rightarrow0$.
Let $\bar R^n=K^n-\bar A^n$. By  \cite[Corollary A.5]{KRzS}, (H1) and (H2),
\begin{align}\label{czwartek.7fp}
\nonumber
\int^T_0|Z_r-\bar Z^n_r|^2\,dr&\le 2\lambda\int^T_0|Y_r-\bar Y^n_r||Z_r-\bar Z^n_r|\,dr+2\int^T_0(Y_{r-}-\bar Y^n_{r-})\,d(R-\bar R^n)_r^{*}\\\nonumber
&\quad+2\sum_{0\le t<T}(Y_t-\bar Y^n_t)\Delta^+(R_t-\bar R^n_t)\\
&\quad+\sup_{0\le t\le T}\Big|\int^T_t(Y_r-\bar Y^n_r)(Z_r-\bar Z^n_r)\,dB_r\Big|
\end{align}
By the the minimality condition and the assumption that $\Delta^-R^+=0$
\begin{equation*}
\sum_{0\le t<T}(Y_t-\bar Y^n_t)\Delta^+(R_t-\bar R^n_t)\le\sum_{0\le t<T}(Y_t-\bar Y^n_t)\Delta^+R_t\le \sum_{0\le t<T}(Y_t-\bar Y^n_t)\Delta^+R^+_t
\end{equation*}
and
\[
\int^T_0(Y_{r-}-\bar Y^n_{r-})\,d(R-\bar R^n)_r^{*}\le \int^T_0(Y_{r-}-\bar Y^n_{r-})\,dR^{+,*}_r=\int^T_0(Y_{r}-\bar Y^n_{r})\,dR^{+,c}_r.
\]
By the above, (\ref{czwartek.7fp}) and the Burkholder--Davis--Gundy inequality,
\begin{equation}\label{czwartek.8fp}
E\Big(\int^T_0|Z_r-\bar Z^n_r|^2\,dr\Big)^{{p}/{2}}\le C(E\sup_{0\le t\le T} |Y_t-\bar Y^n_t|^p+(E\sup_{0\le t\le T}|Y_t-\bar Y^n_t|^p)^{{1}/{2}}(E|R|^p_T)^{{1},{2}}).
\end{equation}
Furthermore,  $\Delta^-Y_t=\Delta^-R^-_t$ and $\Delta ^-\bar Y^n_t=\Delta^-\bar A^n_t$. Hence,
if $\Delta^- \bar Y^n_t=0$, $n\ge 0$, then $\Delta^-Y_t=0$. Otherwise, i.e. if $\Delta^- \bar Y^n_t=\Delta^-\bar A^n_t>0$ for some $n\ge 1$, then $\Delta^-\bar Y^n_t>0$, $n\ge N$
($\{\Delta^-\bar A^n_t\}$ is nondecreasing), which implies that
 $\bar Y^n_{t-}=U_{t-}$ for $n\ge N$. Since $U_{t-}=\bar Y^n_{t-}\le Y_{t-}\le U_{t-}$, this implies that $\bar Y^n_{t-}=Y_{t-},\, n\ge N$. Thus, in both cases,  $\Delta^-\bar Y^n_t\rightarrow \Delta^-Y_t,\, t\in (0,T]$. Moreover, by the construction of $\bar Y^n$,  $\Delta^+\bar Y^n_t\rightarrow \Delta^+Y_t,\, t\in [0,T)$. Consequently, by the generalized Dini theorem,
 $\sup_{0\le t\le T}|Y_t-\bar Y^n_t|\rightarrow 0$ as $n\rightarrow\infty$. Therefore, by (\ref{Tw4.1.1.3})  and the Lebesgue dominated convergence theorem,
$|Y-\bar Y^n|_p\rightarrow 0$.
From this and (\ref{czwartek.8fp}) we deduce that (\ref{Tw4.1.0.0}) and  (\ref{Tw4.1.0.1})  hold with $\gamma=2$.
\end{proof}

Analogously to (\ref{eq4.1}), we define  $(\underline{Y}^n,\underline{Z}^n,\underline{A}^n)$ as a solution of $\mathrm{\underline{R}BSDE}$
\begin{align}\label{r.1}
\nonumber
\underline{Y}^n_t&=\xi+\int^T_t f(r,\underline{Y}^n_r,\underline{Z}^n_r)\,dr+\int^T_t\,dV_r+\int^T_t\,d\underline{K}^n_r-\int^T_t \underline{Z}^n_r\,dB_r\\
&\quad-n\int^T_t(\underline{Y}^n_r-U_r)^-\,dr-\sum_{t\le\tau_{n,i}<T}(\underline{Y}^n_{\tau_{n,i}+}+\Delta^+V_{\tau_{n,i}}-U_{\tau_{n,i}})^+,
\end{align}
with lower barrier $L$, such that if $p>1$, then $\underline{Y}^n\in\mathcal{S}^p$, $\underline{Z}^n\in\mathcal{H}^p$, $\underline{K}^n\in\mathcal{V}^{+,p}$, and if $p=1$, then $\underline{Y}^n$ is of class (D), $\underline{Y}^n\in\mathcal{S}^q$, $\underline{Z}^n\in\mathcal{H}^q$, $q\in(0,1)$, $\underline{K}^n\in\mathcal{V}^{+,1}$. Now $\{\{\tau_{n,i}\}\}$ is defined as follows:  we set $\tau_{1,0}=0$ and then
\[
\tau_{1,i}=\inf\{t>\tau_{1,i-1};\,\Delta^+U_t>1\,\,\mathrm{or}\,\,\Delta^+V_t>1\}\wedge T,\,i=1,\dots,k_1
\]
for some $k_1\in\N$. Next, for $n\in\N$ and given array $\{\{\tau_{n,i}\}\}$, we set $\tilde{\tau}_{n+1,0}=0$,
\[
\tilde{\tau}_{n+1,i}=\inf\{t>\tilde{\tau}_{n+1,i-1}:\Delta^+U_t>1/(n+1)
\,\,\mathrm{or}\,\,\Delta^+V_t>1/(n+1)\}\wedge T,\,i\ge 1.
\]
Let $j_{n+1}$ be chosen so that $P(\tilde{\tau}_{n+1,j_{n+1}}<T)\le\frac1n$. We put
\[\tau_{n+1,i}=\tilde{\tau}_{n+1,i},\quad i=1,\dots,j_{n+1},\quad\tau_{n+1,i+j_{n+1}}
=\tilde{\tau}_{n+1,j_{n+1}}\vee \tau_{n,i},\quad i=1,....,k_n,\]
$k_{n+1}=j_{n+1}+k_{n}+1$. Finally, we put $\tau_{n+1,k_{n+1}}=T$.

\begin{theorem}\label{czw.3}
Let $(\underline{Y}^n,\underline{Z}^n,\underline{A}^n)$, $n\in\mathbb{N}$ be defined by \mbox{\rm(\ref{r.1})}.
\begin{enumerate}[\rm(i)]
\item Assume that $p>1$ and \textnormal{(H1)--(H6)} are satisfied. Then $\underline{Y}^n_t\searrow Y_t$, $t\in[0,T]$, and for every $\gamma\in[1,2)$,
\[
E\Big(\int^T_0|\underline{Z}^n_r-Z_r|^{\gamma}\,dr\Big)^{{p}/{2}}\rightarrow 0,
\]
where $(Y,Z,R)$ is the unique solution of \textnormal{RBSDE}($\xi$,$f+dV$,$L$,$U$) such that $Y\in\mathcal{S}^p$, $Z\in\mathcal{H}^p$, $R\in\mathcal{S}^p$. Moreover, if $\Delta^-R^-_t=0$ for $t\in(0,T]$, then the above convergence also hold with $\gamma=2$ and $|\underline Y^n-Y|_p\rightarrow 0$.

\item Assume that  $p=1$ and  \textnormal{(H1)--(H5), (H6*)}, \textnormal{(Z)} are satisfied. Then $\underline{Y}^n_t\searrow Y_t$, $t\in[0,T]$, and for all $\gamma\in[1,2)$ and $r\in(0,1)$,
\[
E\Big(\int^T_0|\underline{Z}^n_r-Z_r|^{\gamma}\,dr\Big)^{{r}/{2}}\rightarrow 0,
\]
where $(Y,Z,R)$ is the unique solution of \textnormal{RBSDE}($\xi$,$f+dV$,$L$,$U$) such that $Y$ is of class \textnormal{(D)}, $Y\in\mathcal{S}^q$, $Z\in\mathcal{H}^q$, $R\in\mathcal{V}^1$, $q\in(0,1)$.
Moreover, if $\Delta^-R^-_t=0$ for $t\in(0,T]$, then the above convergence also hold with $\gamma=2$ and $|\underline Y^n-Y|_1\rightarrow 0$.
\end{enumerate}
\end{theorem}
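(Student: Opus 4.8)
The plan is to deduce the statement from Theorem~\ref{r.6} by the symmetry $y\mapsto -y$. Put
\[
\hat{Y}^n=-\underline{Y}^n,\qquad \hat{Z}^n=-\underline{Z}^n,\qquad \hat{A}^n=\underline{A}^n,
\]
and introduce the transformed data $\hat\xi=-\xi$, $\hat V=-V$, $\hat L=-U$, $\hat U=-L$, $\hat f(t,y,z)=-f(t,-y,-z)$. First I would negate~(\ref{r.1}): using $\Delta^+\hat V=-\Delta^+V$ and the elementary identity $(a)^+=(-a)^-$, the right-hand side becomes precisely the right-hand side of the penalization scheme~(\ref{eq4.1}) written for the data $(\hat\xi,\hat f+d\hat V,\hat L,\hat U)$. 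Indeed, the reflection term at the lower barrier $L$ turns into the reflection of $\hat{A}^n$ at the upper barrier $\hat U=-L$ (the minimality condition for $\underline{A}^n$ relative to $L$ passing, term by term, to that for $\hat{A}^n$ relative to $\hat U$); the two penalty terms of~(\ref{r.1}) — a continuous $n\int$-penalty and a jump sum, both penalizing the overshoots of $U$ — become $n\int_t^T(\hat{Y}^n_r-\hat L_r)^-\,dr$ and $\sum_{t\le\tau_{n,i}<T}(\hat{Y}^n_{\tau_{n,i}+}+\Delta^+\hat V_{\tau_{n,i}}-\hat L_{\tau_{n,i}})^-$; and, since $\Delta^+U>1/n$ (resp.\ $\Delta^+V>1/n$) is equivalent to $\Delta^+\hat L<-1/n$ (resp.\ $\Delta^+\hat V<-1/n$), the array $\{\{\tau_{n,i}\}\}$ is an array of the type $\{\{\sigma_{n,i}\}\}$ built from the pair $(\hat L,\hat V)$. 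Hence $(\hat{Y}^n,\hat{Z}^n,\hat{A}^n)$ is exactly the scheme~(\ref{eq4.1}) associated with the transformed data.

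Next I would verify that the structural hypotheses pass to the transformed data. The generator $\hat f$ satisfies (H1) with the same $\lambda$; (H2) with the same $\mu$, because $(y-y')(\hat f(t,y,z)-\hat f(t,y',z))=(-y-(-y'))(f(t,-y,-z)-f(t,-y',-z))\le\mu(y-y')^2$; (H4), (H5) and (Z) with the same $g,\gamma,\alpha$; and $\hat\xi$, $\int_0^T|\hat f(r,0,0)|\,dr=\int_0^T|f(r,0,0)|\,dr$ and $|\hat V|_T=|V|_T$ lie in $\mathbb{L}^p$, so (H3) holds. If $X$ is a process as in (H6) (resp.\ (H6*)) for $(f,L,U)$, then $-X$ serves for $(\hat f,\hat L,\hat U)$: one has $\hat L=-U\le -X\le -L=\hat U$, negation preserves $\mathcal{M}_{loc}+\mathcal{V}^p$, $\mathcal{S}^p$ and class (D), and $\int_0^T|\hat f(r,-X_r,0)|\,dr=\int_0^T|f(r,X_r,0)|\,dr\in\mathbb{L}^p$; moreover $\hat L_T=-U_T\le-\xi\le-L_T=\hat U_T$. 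Consequently, under the hypotheses of part~(i) (resp.\ (ii)), Theorem~\ref{r.6}(i) (resp.\ (ii)) applies to the transformed problem and gives $\hat{Y}^n\nearrow\hat Y$ on $[0,T]$ together with~(\ref{Tw4.1.0.0}) (resp.\ (\ref{Tw4.1.0.1})), $Z,\bar{Z}^n$ being replaced by $\hat Z,\hat{Z}^n$, where $(\hat Y,\hat Z,\hat R)$ is the unique solution of $\textnormal{RBSDE}(\hat\xi,\hat f+d\hat V,\hat L,\hat U)$ in the relevant class.

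Finally I would transfer the conclusion back. An inspection of Definition~\ref{def1}, using $\limsup_{s\uparrow r}(-U_s)=-\liminf_{s\uparrow r}U_s$, $\liminf_{s\uparrow r}(-L_s)=-\limsup_{s\uparrow r}L_s$, $(-R)^+=R^-$ and $(-R)^-=R^+$, shows that $(Y,Z,R)$ solves $\textnormal{RBSDE}(\xi,f+dV,L,U)$ in the class in question if and only if $(-Y,-Z,-R)$ solves $\textnormal{RBSDE}(\hat\xi,\hat f+d\hat V,\hat L,\hat U)$ in the corresponding class. Therefore $(Y,Z,R):=(-\hat Y,-\hat Z,-\hat R)$ is the unique solution of $\textnormal{RBSDE}(\xi,f+dV,L,U)$ in the stated class (the property $Y\in\mathcal{S}^q$, $q\in(0,1)$, in the case $p=1$ being automatic, as $Y$ is a regulated $\mathbb{F}$-adapted process of class (D)), and $\underline{Y}^n=-\hat{Y}^n\searrow Y$, $\underline{Z}^n=-\hat{Z}^n$, so the asserted convergence of $\underline{Z}^n$ is that of $\hat{Z}^n$. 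Since $\hat R^+=R^-$, the extra hypothesis $\Delta^-R^-_t=0$, $t\in(0,T]$, is precisely $\Delta^-\hat R^+_t=0$, under which Theorem~\ref{r.6} also yields the convergence with $\gamma=2$ and $|\hat{Y}^n-\hat Y|_p\to0$ (resp.\ $|\hat{Y}^n-\hat Y|_1\to0$); since $|\underline{Y}^n-Y|_p=|\hat{Y}^n-\hat Y|_p$, the last claims follow. The only step demanding genuine care is the first — confirming that the two penalization schemes~(\ref{r.1}) and~(\ref{eq4.1}) are exact mirror images, in particular that the exhausting arrays of stopping times and the one-sided jumps of $L$, $U$ and $V$ correspond correctly; everything else is formal bookkeeping. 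Alternatively, one may reprove Theorem~\ref{r.6} verbatim, interchanging throughout the roles of $L$ and $U$ together with all signs.
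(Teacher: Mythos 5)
Your proposal is correct and is essentially the paper's own proof: the authors likewise negate the scheme (\ref{r.1}), observe via Definition \ref{r.2} that $(-\underline{Y}^n,-\underline{Z}^n,\underline{K}^n)$ solves the upper-barrier penalization (\ref{eq4.1}) for the data $(-\xi,-\tilde f-dV,-U,-L)$ with $\tilde f(t,y,z)=-f(t,-y,-z)$, and then invoke Theorem \ref{r.6}. You merely spell out the bookkeeping (transfer of (H1)--(H6), (H6*), (Z), the correspondence of the exhausting arrays, and $\Delta^-R^-=\Delta^-\hat R^+$) that the paper leaves implicit.
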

\begin{proof}
By Definition \ref{r.2}, $(-\underline{Y}^n,-\underline{Z}^n,\underline{K}^n)$ is a solution of $\mathrm{\overline{R}BSDE}$  of the  form
\begin{align}
\label{r.3}
-\underline{Y}^n_t&=-\xi-\int^T_t f(r,-\underline{Y}^n_r,-\underline{Z}^n_r)\,dr-\int^T_t\,dV_r
-\int^T_t\,d\underline{K}^n_r+\int^T_t \underline{Z}^n_r\,dB_r\nonumber\\
&\quad+n\int^T_t(\underline{Y}^n_r-U_r)^+\,dr
+\sum_{t\le\sigma_{n,i}<T}(\underline{Y}^n_{\sigma_{n,i}+}
+\Delta^+V_{\sigma_{n,i}}-U_{\sigma_{n,i}})^+
\end{align}
with upper barrier $-L$.
By Theorem \ref{r.6}, solutions of the above equation tend to the solution of RBSDE($-\xi$,$-\tilde{f}-dV$,$-U$,$-L$) with $\tilde{f}(t,y,z)=-f(t,-y,-z)$, from which the desired result follows.
\end{proof}

\subsubsection{Penalization method via BSDEs}

In this section we consider approximation of solutions of  RBSDE with two barriers by solutions of usual BSDEs. Let  $(Y^n,Z^n)$ be a solution of $\mathrm{BSDE}$ of the form
\begin{align}\label{r.4}
\nonumber
Y^n_t&=\xi+\int^T_t f(r,Y^n_r,Z^n_r)\,dr+\int^T_t\,dV_r-\int^T_t Z^n_r\,dB_r+n\int^T_t(Y^n_r-L_r)^-\,dr\\\nonumber
&\quad+\sum_{t\le\sigma_{n,i}<T}(Y^n_{\sigma_{n,i}+}+\Delta^+V_{\sigma_{n,i}}
-L_{\sigma_{n,i}})^--n\int^T_t(Y^n_r-U_r)^+\,dr\\
&\quad-\sum_{t\le\tau_{n,i}<T}(Y^n_{\tau_{n,i}+}+\Delta^+V_{\tau_{n,i}}-U_{\tau_{n,i}})^+
\end{align}
such that if $p>1$, then $Y^n\in\mathcal{S}^p$, $Z^n\in\mathcal{H}^p$, and if $p=1$, then $Y^n$ is of class (D), $Y^n\in\mathcal{S}^q$, $Z^n\in\mathcal{H}^q$, $q\in(0,1)$, where $\{\{\sigma_{n,i}\}\}$ and $\{\{\tau_{n,i}\}\}$ are defined by (\ref{eq4.1}) and (\ref{r.1}).  One can find a solution of (\ref{r.4}) inductively in the manner used to solve (\ref{eq4.1}) and (\ref{r.1}). More precisely, for fixed $n\in\N$ let  $k_{n}$ be the   number of stopping times $\{\sigma_{n,i}\}$ and $\{\tau_{n,i}\}$.
We put $m_n=2k_{n}$, $\gamma_{n,0}=0$, $\gamma_{n,1}=\sigma_{n,1}\wedge\tau_{n,1}$  and   $\gamma_{n,m}=\bar\sigma_{n,m}\wedge\bar\tau_{n,m}$, $m=2,\dots,m_n$, where
\[
\bar\sigma_{n,m}=\min\{\sigma_{n,i}:\sigma_{n,i}>\gamma_{n,m-1},\,i=1,\dots,k_{n}\}\wedge T \]
and
\[
\bar\tau_{n,m}=\min\{\tau_{n,i}:\tau_{n,i}>\gamma_{n,m-1},\,i=1,\dots,k_{n}\}\wedge T.
\]
Note that $\{\gamma_{n,m}\}$  are stopping times such that $\gamma_{n,m_n}=T$ and
\[
\bigcup^{k_n}_{i=1} [[\sigma_{n,i}]]\cup \bigcup^{k_{n}}_{i=1} [[\tau_{n,i}]]=\bigcup^{m_n}_{m=1} [[\gamma_{n,m}]].
\]
Moreover, for $m=1,\dots,{m_n}$, on  each interval $(\gamma_{n,m-1},\gamma_{n,m}]$,
the pair
$({Y}^n,{Z}^n)$ is a solution of the nonreflected  BSDEs of the form
\begin{align*}
Y^n_t&=L_{\gamma_{n,m}}\vee(Y^n_{\gamma_{n,m}+}+\Delta^+V_{\gamma_{n,m}})\wedge U_{\gamma_{n,m}}+\int^{\gamma_{n,m}}_t f(r,{Y}^n_r,{Z}^n_r)\,dr+\int^{\gamma_{n,m}}_t\,dV_r\\
&\quad-\int^{\gamma_{n,m}}_t {Z}^n_r\,dB_r+n\int^{\gamma_{n,m}}_t({Y}^n_r-L_r)^-\,dr -n\int^{\gamma_{n,m}}_t(Y^n_r-U_r)^+\,dr,\, t\in(\gamma_{n,m-1},\gamma_{n,m}],
\end{align*}
with  ${Y}^n_0=L_0\vee(\bar{Y}^n_{0+}+\Delta^+V_0)\wedge U_0$, $n\in\mathbb{N}$. Therefore,  to solve (\ref{eq4.1}), we divide $[0,T]$ into the finite number of intervals $[0,\gamma_{n,1}],\dots,(\gamma_{n,m_{n}-1},T]$ and we solve this equation on these intervals $(\gamma_{n,m-1},\gamma_{n,m}]$ inductively starting from the interval $(\gamma_{n,m_{n}-1},T]$.

\begin{theorem}
\label{th.r.1234}
Let $(Y^n,Z^n)$, $n\in\mathbb{N}$ be defined by \mbox{\rm(\ref{r.4})}.
\begin{enumerate}[\rm(i)]
\item Assume that $p>1$ and \textnormal{(H1)--(H6)} are satisfied. Then $Y^n_t\rightarrow Y_t$, $t\in[0,T]$, and for every $\gamma\in[1,2)$,
\begin{equation}
\label{eq.wewo1}
E\Big(\int^T_0|Z^n_r-Z_r|^{\gamma}\,dr\Big)^{{p}/{2}}\rightarrow 0,
\end{equation}
where $(Y,Z,R)$ is the unique solution of \textnormal{RBSDE}($\xi, f+dV, L, U$) such that $Y\in\mathcal{S}^p$, $Z\in\mathcal{H}^p$, $R\in\mathcal{S}^p$. Moreover, if $\Delta^-R_t=0$ for $t\in(0,T]$, then the above convergence also hold with $\gamma=2$ and $|Y^n-Y|_p\rightarrow 0$.

\item Assume that $p=1$ and \textnormal{(H1)--(H5), (H6*)}, \textnormal{(Z)} are satisfied. Then $Y^n_t\rightarrow Y_t$, $t\in[0,T]$, and for all $\gamma\in[1,2)$ and $r\in(0,1)$,
\begin{equation}
\label{eq.wewo2}
E\Big(\int^T_0|Z^n_r-Z_r|^\gamma\,dr\Big)^{{r}/{2}}\rightarrow 0,
\end{equation}
where $(Y,Z,R)$ is the unique solution of \textnormal{RBSDE}($\xi$,$f+dV$,$L$,$U$) such that $Y$ is of class \textnormal{(D)}, $Y\in\mathcal{S}^q$, $Z\in\mathcal{H}^q$, $R\in\mathcal{V}^1$, $q\in(0,1)$. Moreover, if $\Delta^-R_t=0$ for $t\in(0,T]$, then the above convergence also hold with $\gamma=2$ and  $| Y^n-Y|_1\rightarrow 0$
\end{enumerate}
\end{theorem}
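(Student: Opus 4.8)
The plan is to bracket the solution $(Y^n,Z^n)$ of the doubly penalized equation (\ref{r.4}) between the two monotone penalization schemes of Theorem \ref{r.6} and Theorem \ref{czw.3} and then to run the stability Lemma \ref{r.5}. The first step is the sandwich
\[
\bar Y^n_t\le Y^n_t\le\underline Y^n_t,\qquad t\in[0,T],\ n\in\N.
\]
For the left inequality one compares (\ref{r.4}) with the defining equation (\ref{eq4.1}) of $(\bar Y^n,\bar Z^n,\bar A^n)$: their generators, with the $L$-penalty treated as part of the generator, differ only by the nonnegative term $n(y-U)^+$, and (\ref{r.4}) carries in addition the decreasing penalty $-n\int(\,\cdot\,-U)^+\,dr-\sum_\tau(\,\cdot\,-U)^+$ in place of $-\int d\bar A^n$. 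Running the comparison argument of Proposition \ref{proposition2.2} (resp. Lemma \ref{uw2.0} when $p=1$) for $(\bar Y^n-Y^n)^+$ on each interval between consecutive right-jump times, one checks that on $\{\bar Y^n>Y^n\}$ the problematic generator term is annihilated by $\bar Y^n\le U$, while the reflection term $d\bar A^n$ and the extra penalty of (\ref{r.4}) act in compatible directions; hence $Y^n\ge\bar Y^n$. The inequality $Y^n\le\underline Y^n$ is obtained symmetrically by comparing (\ref{r.4}) with (\ref{r.1}), using now $\underline Y^n\ge L$ and the sign of the lower-barrier reflection term of (\ref{r.1}). Equivalently, $Y^n$ is the $n$-th iterate of the one-barrier modified penalization of (\ref{eq4.1}) at $U$ (resp. of (\ref{r.1}) at $L$), so the inequalities also follow from the monotonicity of those schemes established in \cite{KRzS} and Proposition \ref{stw.4.1}.

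By Theorem \ref{r.6}, $\bar Y^n_t\nearrow Y_t$ and by Theorem \ref{czw.3}, $\underline Y^n_t\searrow Y_t$ for every $t\in[0,T]$, where $(Y,Z,R)$ is the unique solution of RBSDE$(\xi,f+dV,L,U)$; together with the sandwich this gives $Y^n_t\to Y_t$ for every $t$, which is the first assertion of (i) and (ii). Since $\bar Y^1\le Y^n\le\underline Y^1$, the sequence $\{Y^n\}$ is dominated by processes in $\mathcal S^p$ (resp. of class \textnormal{(D)}); writing (\ref{r.4}) as $Y^n_t=Y^n_0-\int_0^tf(s,Y^n_s,Z^n_s)\,ds-\int_0^t\,dD^n_s+\int_0^tZ^n_s\,dB_s$ with $D^n$ built from $V$ and the two penalty terms, the a priori estimates of \cite{kl} used exactly as in Step~2 of the proof of Theorem \ref{r.6} furnish a chain $\{\tau_k\}$ with $\sup_nE((D^n)^+_{\tau_k})^2<\infty$, a uniform bound on $E(\int_0^T|Z^n_s|^2\,ds)^{p/2}$ for $p>1$ (resp. $E(\int_0^T|Z^n_s|^2\,ds)^{q/2}$, $q\in(0,1)$, for $p=1$), and hence $\sup_nE(\int_0^T|f(s,Y^n_s,Z^n_s)|\,ds)^p<\infty$ (resp. with exponent $q$).

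Next I apply Lemma \ref{r.5} to (\ref{r.4}) and the limit equation, with $D$ built from $V$ and $R$. Hypothesis (a) is the estimate just obtained; (c) holds with $C$ the total variation of $R$, because (\ref{r.4}) has no left jumps beyond those of $V$ whereas the remark following Definition \ref{def1} identifies the left jumps of $Y$ with those of $V$ together with $\Delta^-R$, so $|\Delta^-(Y_t-Y^n_t)|=|\Delta^-R_t|$; (d) follows from $\bar Y^1\le Y^n\le\underline Y^1$ together with (H3), (H6)/(H6*) and Proposition \ref{prop2.5} (exactly as hypothesis (d) of Theorem \ref{tw3.2} is checked in the proof of Theorem \ref{r.6}); and (e) is the pointwise convergence. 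For (b) one writes $D^n=V+(\text{absolutely continuous penalties})+(\text{jump sums})$: the $V$-contribution to the relevant $\liminf$ tends to $0$ by dominated convergence (since $Y^n\to Y$ pointwise, the difference is dominated by $\underline Y^1-\bar Y^1$, and $V\in\mathcal V^1$), while for every $n$ the remaining contributions are nonnegative pointwise — the absolutely continuous part because $L\le Y\le U$ makes $(Y_r-Y^n_r)\,n\big((Y^n_r-L_r)^{-}-(Y^n_r-U_r)^{+}\big)\ge0$, and the jump part because the minimality identities give $Y^n_{\sigma_{n,i}}=L_{\sigma_{n,i}}$ whenever the $\sigma_{n,i}$-jump of $L$ fires and $Y^n_{\tau_{n,i}}=U_{\tau_{n,i}}$ whenever the $\tau_{n,i}$-jump of $U$ fires, combined with $Y\ge L$ and $Y\le U$. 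Lemma \ref{r.5} then yields $Z^n\to Z$ $dt\otimes P$-a.e., $\int_0^T|f(s,Y^n_s,Z^n_s)-f(s,Y_s,Z_s)|\,ds\to0$ in probability, and a chain along which $E\int_0^{\tau_k}|Z^n_s-Z_s|^\gamma\,ds\to0$ for every $\gamma\in(0,2)$; passing from this to (\ref{eq.wewo1}) (resp. (\ref{eq.wewo2})) on all of $[0,T]$ is the same uniform-integrability argument by which (\ref{Tw4.1.0.0})/(\ref{Tw4.1.0.1}) are deduced in Theorem \ref{r.6}, using the uniform bounds above.

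Finally, if $\Delta^-R\equiv0$ then, since $\Delta^-R^+$ and $\Delta^-R^-$ never occur simultaneously, both $\Delta^-R^+\equiv0$ and $\Delta^-R^-\equiv0$; Theorems \ref{r.6} and \ref{czw.3} then give $|\bar Y^n-Y|_p\to0$ and $|\underline Y^n-Y|_p\to0$, so by the sandwich $|Y^n-Y|_p\le|\bar Y^n-Y|_p+|\underline Y^n-Y|_p\to0$ (and likewise with $p=1$). Moreover $\Delta^-C=|\Delta^-R|\equiv0$, so the last clause of Lemma \ref{r.5} upgrades the $Z^n$-convergence to $\gamma=2$, which gives the remaining statements. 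The step requiring genuine work is the sandwich inequality: one must solve all the equations piecewise over the common refinement of the jump-exhausting arrays $\{\{\sigma_{n,i}\}\}$ and $\{\{\tau_{n,i}\}\}$, matching right-endpoint values along a backward induction, and carry the comparison through there with the reflection terms of (\ref{eq4.1}), (\ref{r.1}) and the extra penalties of (\ref{r.4}) controlled by $\bar Y^n\le U$, $\underline Y^n\ge L$; once this is in place, the verification of hypothesis (b) of Lemma \ref{r.5} is only a matter of careful bookkeeping with the minimality identities.
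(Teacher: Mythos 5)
Your proof follows the paper's own strategy almost step for step: sandwich $\bar Y^n\le Y^n\le\underline Y^n$ between the two monotone schemes of Theorems \ref{r.6} and \ref{czw.3} to get the pointwise convergence, then apply Lemma \ref{r.5} to (\ref{r.4}) for the convergence of $Z^n$, checking hypotheses (a)--(e) essentially as you do (your verification of (b) via the sign of the penalty terms and of (c) via $\Delta^-(Y_t-Y^n_t)=-\Delta^-R_t$ is exactly the paper's). One organizational difference: for the sandwich the paper does not compare generators interval by interval; it observes, as in Step 1 of the proof of Theorem \ref{r.6}, that $(Y^n,Z^n,K^n-A^n)$ is a genuine solution of the two-barrier equation \textnormal{RBSDE}$(\xi,f+dV,L\wedge Y^n,U\vee Y^n)$ (and similarly for $\bar Y^n$, $\underline Y^n$ with one barrier modified), so that Proposition \ref{proposition2.2} and Lemma \ref{uw2.0} apply directly. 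Your route can be made to work but carries more bookkeeping; as written it is only a sketch.

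The one genuine gap is the case $\gamma=2$. The last clause of Lemma \ref{r.5} yields only $E\int_0^{\tau_k}|Z^n_s-Z_s|^2\,ds\to 0$ along a chain $\{\tau_k\}$; it does not give $E\big(\int_0^T|Z^n_r-Z_r|^2\,dr\big)^{p/2}\to 0$, and the uniform-integrability argument you invoke to pass from the chain to $[0,T]$ fails precisely at $\gamma=2$: the bound $\sup_nE\big(\int_0^T|Z^n_r-Z_r|^2\,dr\big)^{p/2}<\infty$ makes the family $\big(\int_0^T|Z^n_r-Z_r|^\gamma\,dr\big)^{p/2}$ uniformly integrable only for $\gamma<2$, where H\"older leaves a spare power. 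The paper closes this differently: applying \cite[Corollary A.5]{KRzS} to $Y-Y^n$, using the minimality conditions and $\Delta^-R=0$ to control the reflection terms and then Burkholder--Davis--Gundy, it obtains
\[
E\Big(\int^T_0|Z_r-Z^n_r|^2\,dr\Big)^{p/2}\le C\Big(E\sup_{t\le T}|Y_t-Y^n_t|^p+\big(E\sup_{t\le T}|Y_t-Y^n_t|^p\big)^{1/2}\big(E|R|^p_T\big)^{1/2}\Big),
\]
and concludes from $|Y^n-Y|_p\to0$, which you do obtain correctly from the sandwich and Theorems \ref{r.6} and \ref{czw.3}. You need this estimate, or an equivalent one, to complete the $\gamma=2$ assertion.
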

\begin{proof}
Notice that (\ref{r.4}) one can written in the shorter form
\begin{equation}
\label{r.7}
Y^n_t=\xi+\int_t^Tf(r,Y^n_r,Z^n_r)\,dr+\int^T_t\,dV_r-\int_t^T Z^n_r\,dB_r+\int^T_t\,dK^n_r-\int^T_t\,dA^n_r,
\end{equation}
where
\begin{align}
\label{r.8} K^n_t&=n\int_0^t (\bar{Y}^n_r-L_r)^-\,dr+\sum_{0\leq \sigma_{n,i}<t}(Y^n_{\sigma_{n,i}+}+\Delta^+V_{\sigma_{n,i}}-L_{\sigma_{n,i}})^-\nonumber \\
&=:K^{n,*}_t+K^{n,d}_t,\quad t\in[0,T]
\end{align}
and
\begin{align}
\label{r.9} A^n_t&=n\int_0^t (Y^n_r-U_r)^+\,dr+\sum_{0\leq \tau_{n,i}<t}(Y^n_{\tau_{n,i}+}+\Delta^+V_{\tau_{n,i}}-U_{\tau_{n,i}})^+\nonumber \\
&=:A^{n,*}_t+A^{n,d}_t,\quad t\in[0,T]
\end{align}

{\bf Step 1.} We show the convergence of $\{Y^n\}$. By  Step 1 of Theorem \ref{r.6} and Theorem \ref{czw.3},
we know that  $\bar{Y}^n$ is the first component of a solution of RBSDE($\xi$,$f+dV, L^n, U$) with $L^n=L\wedge Y^n$, and $\underline{Y}^n$ is the first component of a solution of  RBSDE($\xi, f+dV, L, U^n$) with $U^n=U\vee Y^n$. As in the Step 1 of the proof of the Theorem \ref{r.6} one can show that the triple $(Y^n,Z^n,K^n-A^n)$ is a solution of  RBSDE$(\xi,f+dV,L^n,U^n)$.
By Proposition \ref{proposition2.2} and Lemma \ref{uw2.0},
\begin{equation}\label{czw.2}
\bar{Y}^n\le Y^n\le\underline{Y}^n,\quad n\ge 1.
\end{equation}
By Theorem \ref{r.6} and Theorem \ref{czw.3},
\begin{equation}\label{czw.4}
Y^n_t\rightarrow Y_t,\quad t\in[0,T].
\end{equation}

{\bf Step 2.} We  show the convergence of $\{Z^n\}$ in measure $dt\otimes P$. For this end, we willl apply Lemma \ref{r.5} to (\ref{r.7}). Since we know that $Y^n_t\rightarrow Y_t$, $t\in[0,T]$, assumption (e) of Lemma \ref{r.5} is satisfied. We are going to check assumptions the remaining assumptions (a)-(d). Let $D^n=V+K^n-A^n$, $n\in\N$. First we will show  that there exists a chain $\{\tau_k\}$ such that
$\sup_{n\ge 0}E((D^n)^+_{\tau_k})^2<\infty$.
If $p>1$, then by (H6), there exists $X\in\mathcal{M}_{loc}+\mathcal{V}^p$, $X\in\mathcal{S}^p$ such that $U\ge X\ge L$ and $\int^T_0 f^-(s,X_s,0)\,ds\in \mathbb L^p$. If $p=1$, then by (H6*), there exists $X$ of class (D), $X\in\mathcal{M}_{loc}+\mathcal{V}^1$, $X\ge L$ and $\int^T_0 f^-(s,X_s,0)\,ds\in \mathbb L^1$. Since the Brownian filtration has the representation property, there exist processes $H\in\mathcal{M}_{loc}$ and $C\in\mathcal{V}^p$ such that
\[
X_t=X_T-\int^T_t\,dC_s-\int^T_t H_s\,dB_s,\quad t\in[0,T].
\]
Set  $X'_t=X_t$, $t\in[0,T)$, $X'_T=\xi$. Then for some $A',K'\in\mathcal{V}^{+,p}$ we have
\begin{equation*}
X'_t=\xi+\int^T_t f(r,X'_r,H_r)\,dr+\int^T_t\,dV_r+\int^T_t\,dK'_r-\int^T_t\,dA'_r-\int^T_t H_r\,dB_r,\quad t\in[0,T].
\end{equation*}
Let $(\hat{X}^n,\hat{H}^n,\hat{K}^n)$ be a solution of the $\mathrm{\underline{R}BSDE}$
\begin{align}
\label{s.1}
\hat{X}^n_t&=\xi+\int^T_t f(r,\hat{X}^n_r,\hat{H}^n_r)\,dr+\int^T_t\,dV_r+\int^T_t\,d\hat{K}^n_r-\int^T_t\,dA'_r-\int^T_t \hat{H}^n_r\,dB_r\nonumber\\
&\quad+\sum_{t\le\tau_{n,i}<T}(\hat{X}^n_{\tau_{n,i}+}+\Delta^+V_{\tau_{n,i}}-U_{\tau_{n,i}})^+,\quad t\in[0,T],
\end{align}
with lower barrier $L^n$, such that if $p>1$, then $\hat{X}^n\in\mathcal{S}^p$, $\hat{H}^n\in\mathcal{H}^p$, $\hat{K}^n\in\mathcal{V}^{+,p}$ and if $p=1$, then $\hat{X}^n$ is of class \textnormal{(D)}, $\hat{H}^n\in\mathcal{H}^q$, $q\in(0,1)$, $\hat{A}^n\in\mathcal{V}^{+,1}$. The existence of the solution follows from  \cite[Theorem 3.18]{KRzS}. Note that $(X',H,K')$ is a solution of $\rm{\underline{R}}$BSDE$(\xi,f+dV-dA',X')$.
Since $L^n\le X'$, by Proposition \ref{proposition2.2} and Lemma \ref{uw2.0}, $\hat{X}^n\le X'\le U$. Thanks to this, we may rewrite (\ref{s.1}) in the  form
\begin{align*}
\hat{X}^n_t&=\xi+\int^T_t f(r,\hat{X}^n_r,\hat{H}^n_r)\,dr+\int^T_t\,dV_r+\int^T_t\,d\hat{K}^n_r-\int^T_t\,dA'_r-n\int^T_t(\hat{X}^n_r-U_r)^+\,dr\\
&\quad-\sum_{t\le\tau_{n,i}<T}(\hat{X}^n_{\tau_{n,i}+}+\Delta^+V_{\tau_{n,i}}-U_{\tau_{n,i}})^+-\int^T_t \hat{H}^n_r\,dB_r,\quad t\in[0,T].
\end{align*}
By Proposition \ref{proposition2.2} and Lemma \ref{uw2.0} again,
\begin{equation}\label{s.2}
\hat{X}^n\le \underline{Y}^n,
\end{equation}
and by Proposition \ref{stw.4.1},
\begin{equation}\label{s.3}
d\hat{K}^n\ge dK^n,\quad n\ge 1.
\end{equation}
Moreover,
\begin{align*}
(\hat{X}^n_{\tau_{n,i}+}+\Delta^+V_{\tau_{n,i}}-U_{\tau_{n,i}})^+
&\le (X'_{\tau_{n,i}+}+\Delta^+V_{\tau_{n,i}}-U_{\tau_{n,i}})^+\nonumber\\
&=(\Delta^+X'_{\tau_{n,i}}+\Delta^+V_{\tau_{n,i}}
+X'_{\tau_{n,i}}-U_{\tau_{n,i}})^+\nonumber\\
&\le(\Delta^+X_{\tau_{n,i}}+\Delta^+V_{\tau_{n,i}})^+\nonumber\\
&\le\Delta^+|C|_{\tau_{n,i}}+\Delta^+|V|_{\tau_{n,i}}.
\end{align*}
Let $(\tilde{X}^n,\tilde{H}^n,\tilde{K}^n)$ be a solution of the $\mathrm{\underline{R}BSDE}$
\begin{align*}
\tilde{X}^n_t&=\xi+\int^T_t f(r,\tilde{X}^n_r,\tilde{H}^n_r)\,dr+\int^T_t\,dV_r+\int^T_t\,d\tilde{K}^n_r-\int^T_t\,dA'_r-n\int^T_t(\tilde{X}^n_r-U_r)^+\,dr\\
&\quad-\int^T_t\,d|C|_r-\int^T_t\,d|V|_r-\int^T_t \tilde{H}^n_r\,dB_r,\quad t\in[0,T],
\end{align*}
with lower barrier $L^n$, such that if $p>1$, then $\tilde{X}^n\in\mathcal{S}^p$, $\tilde{H}^n\in\mathcal{H}^p$, $\tilde{K}^n\in\mathcal{V}^{+,p}$ and if $p=1$, then $\tilde{X}^n$ is of class \textnormal{(D)}, $\tilde{H}^n\in\mathcal{H}^q$, $q\in(0,1)$, $\tilde{K}^n\in\mathcal{V}^{+,1}$. The existence of the solution follows \cite[Theorem 3.18]{KRzS}. By Proposition \ref{proposition2.2} and Lemma \ref{uw2.0}, $\tilde{X}^n\le\hat{X}^n$. By this and  (\ref{s.2}),
\begin{equation}\label{s.5}
\tilde{X}^n\le \underline{Y}^n.
\end{equation}
Moreover, by Proposition \ref{stw.4.1}, $d\hat{K}^n\le d\tilde{K}^n$, which by (\ref{s.3}) implies that
\begin{equation}\label{s.6}
dK^n\le d\tilde{K}^n.
\end{equation}
By \cite[Lemma 4.8]{kl}, there exists a chain $\{\tau'_k\}$ such that
\begin{equation}\label{s.8}
E\big(\sup_{t\le\tau'_k}(|\underline{Y}^1_t|^2+|\overline{Y}^1_t|^2)\big)<\infty,\quad k\ge 1.
\end{equation}
Define
$
\tau''_k=\inf\{t\ge 0;\,\int^t_0|f(r,0,0)|\,dr+\int^t_0\,d|V|_r+\int^t_0 f^-(r,\underline{Y}^1_r,0)\,dr\ge k\}\wedge T
$, $k\in\N$.
Since $\underline{Y}^{n+1}\le\underline{Y}^n$ and $\overline{Y}^n\le\overline{Y}^{n+1}$  by (\ref{czw.2}) we have that
\begin{equation}\label{s.7}
\overline{Y}^1\le Y^n\le\underline{Y}^1.
\end{equation}
Hence
\begin{equation}\label{s.9}
|Y^n|\le|\underline{Y}^1|+|\overline{Y}^1|.
\end{equation}
Set $\tau_k=\tau'_k\wedge\tau''_k$. By (\ref{s.9}) and \cite[Proposition 4.3]{kl},
\begin{align}\label{s.10}
\nonumber
E((\tilde{K}^n_{\tau_k})^2&\le CE\Big(\sup_{t\le\tau_k}\big(|\underline{Y}^1_{\tau_k}|^2\big)+|\overline{Y}^1_{\tau_k}|^2+\Big(\int_0^{\tau_k}|f(r,0,0)|\,dr\Big)^2\\&\quad+\Big(\int_0^{\tau_k}\,d|V|_r\Big)^2
+\Big(\int_0^{\tau_k}|f^-(r,\underline{Y}^1_r,0)|\,dr\Big)^2\Big)<\infty.
\end{align}
That (a) is satisfied now follows from (\ref{s.6}), (\ref{s.10}) and \cite[Lemma 4.8]{kl}. Since $(Y^n,Z^n,K^n-A^n)$ is a solution to RBSDE$(\xi,f+dV,L^n,U^n)$, for $\sigma,\tau\in\Gamma$ such that  $\sigma\le\tau$ we have
\[
\int^{\tau}_{\sigma}(Y_r-Y^n_r)\,dD^{n,*}_r+\sum_{\sigma\le t<\tau}(Y_t-Y^n_t)\Delta^+D^n_t\ge\int^{\tau}_{\sigma}(Y_r-Y^n_r)\,dV^{*}_r+\sum_{\sigma\le t<\tau}(Y_t-Y^n_t)\Delta^+V_t.
\]
From this, (\ref{s.9}) and the Lebesgue dominated convergence theorem we get (b). Assumption  (c) follows from the inequality
$|\Delta^-(Y_t-Y^n_t)|=|\Delta^-Y_t|\le\Delta^-|R|_t+\Delta^-|V|_t$.
Assumption (d) follows from (\ref{s.7}) and Proposition \ref{prop2.5}. Since (a)--(e) are satisfied,   Lemma \ref{r.5} yields
\begin{equation}
\label{eq.wewo3}
Z^n\rightarrow Z,\quad dt\otimes P\mbox{-a.e.}
\end{equation}

{\bf Step 3.} We  will show (\ref{eq.wewo1}) and  (\ref{eq.wewo2}). By (\ref{s.6}), (\ref{s.9}) and \cite[Lemma 4.2]{kl},  if $p>1$, then
\begin{align}\label{w.1}
\nonumber
E\Big(\int^T_0|Z^n_s|^2\,ds\Big)^{{p}/{2}}&\le CE\Big(\sup_{t\le T}(|\underline{Y}^1_t|^p+|\overline{Y}^1_t|^p)+\Big(\int^T_0\,d|V|_s\Big)^p+\Big(\int^T_0|f^-(s,\underline{Y}^1_s,0)|\,ds\Big)^p\\&\quad+\Big(\int^T_0|f(s,0,0)|\,ds\Big)^p
+\Big(\int^T_0\,d\tilde{K}^n_s\Big)^p\Big),
\end{align}
and if $p=1$, then for every $q\in(0,1)$,
\begin{align}\label{w.2}
\nonumber
E\Big(\int^T_0&|Z^n_s|^2\,ds\Big)^{{q}/{2}}\le CE\Big(\sup_{t\le T}(|\underline{Y}^1_t|^q+|\overline{Y}^1_t|^q)+\Big(\int^T_0|f(s,0,0)|\,ds\Big)^q\\&
\quad+\Big(\int^T_0|f^-(s,\underline{Y}^1_s,0)|\,ds\Big)^q+\Big(\int^T_0\,d|V|_s\Big)^q
+\Big(\int^T_0\,d\tilde{K}^n_s\Big)^q\Big).
\end{align}
Let  $(\tilde{X},\tilde{H})$ be a solution of the  BSDE
\begin{align*}
\tilde{X}_t&=\xi+\int^T_t f(r,\tilde{X}_r,\tilde{H}_r)\,dr+\int^T_t\,dV_r-\int^T_t\,dA'_r\\&
\quad-\int^T_t\,d|C|_r-\int^T_t\,d|V|_r-\int^T_t \tilde{H}_r\,dB_r,\quad t\in[0,T],
\end{align*}
such that if $p>1$, then $\tilde{X}\in\mathcal{S}^p$, $\tilde{H}\in\mathcal{H}^p$, $\tilde{K}\in\mathcal{V}^{+,p}$ and if $p=1$, then $\tilde{X}$ is of class \textnormal{(D)}, $\tilde{H}\in\mathcal{H}^q$, $q\in(0,1)$, $\tilde{K}\in\mathcal{V}^{+,1}$. The existence of the solution follows from \cite[Theorem 3.18]{KRzS}. By Proposition \ref{proposition2.2} and Lemma \ref{uw2.0}, $\tilde{X}\le\tilde{X}^n$, so by  (\ref{s.5}),
\begin{equation}\label{w.3}
\tilde{X}\le\tilde{X}^n\le\underline{Y}^1.
\end{equation}
By (\ref{w.3}) and  \cite[Lemma 4.2]{kl}, if $p>1$, then
\begin{align}\label{czwartek.1}
\nonumber
E(\tilde{K}^n_T)^p&\le CE\Big(\sup_{t\le T}(|\underline{Y}^1_t|^p+|\tilde{X}_t|^p)+\Big(\int^T_0\,d|V|_s\Big)^p+\Big(\int^T_0|f^-(s,\underline{Y}^1_s,0)|\,ds\Big)^p\\&\quad+\Big(\int^T_0|f(s,0,0)|\,ds\Big)^p
+\Big(\int^T_0\,dA'_s\Big)^p+\Big(\int^T_0\,d|C|_s\Big)^p\Big),
\end{align}
and if $p=1$, then for every $q\in (0,1)$,
\begin{align}\label{czwartek.2}
\nonumber
E(\tilde{K}^n_T)^q&\le CE\Big(\sup_{t\le T}(|\underline{Y}^1_t|^q+|\tilde{X}_t|^q)+\Big(\int^T_0\,d|V|_s\Big)^q
+\Big(\int^T_0|f^-(s,\underline{Y}^1_s,0)|\,ds\Big)^q\\&\quad+\Big(\int^T_0|f(s,0,0)|\,ds\Big)^q
+\Big(\int^T_0\,dA'_s\Big)^q+\Big(\int^T_0\,d|C|_s\Big)^q\Big).
\end{align}
In case $p>1$, combining (\ref{w.1}) with (\ref{czwartek.1}) we get
\begin{align}\label{czwartek.3}\nonumber
E\Big(\int^T_0|Z^n_r|^2\,dr\Big)^{{p}/{2}}&\le CE\Big(\sup_{t\le T}(|\underline{Y}^1_t|^p+|\overline{Y}^1_t|^p+|\tilde{X}_t|^p)+\Big(\int^T_0\,d|V|_r\Big)^p\\&\quad+\Big(\int^T_0|f^-(r,\underline{Y}^1_r,0)|\,dr\Big)^p\nonumber
+\Big(\int^T_0|f(r,0,0)|\,dr\Big)^p\\&\quad
+\Big(\int^T_0\,dA'_r\Big)^p+\Big(\int^T_0\,d|C|_r\Big)^p\Big)\Big),
\end{align}
In case $p=1$, combining (\ref{w.2}) with (\ref{czwartek.2}) we get
\begin{align}\label{czwartek.4}
\nonumber
E\Big(\int^T_0|Z^n_r|^2\,dr\Big)^{{q}/{2}}&\le CE\Big(\sup_{t\le T}(|\underline{Y}^1_t|^q+|\overline{Y}^1_t|^q|+\tilde{X}_t|^q)+\Big(\int^T_0|f(r,0,0)|\,dr\Big)^q\\
\nonumber&\quad+\Big(\int^T_0|f^-(r,\underline{Y}^1_r,0)|\,dr\Big)^q+\Big(\int^T_0\,d|V|_r\Big)^q\\
&\quad+\Big(\int^T_0\,dA'_r\Big)^q+\Big(\int^T_0\,d|C|_r\Big)^q\Big)\Big)
\end{align}
for $q\in(0,1)$. From (\ref{eq.wewo3}) and (\ref{czwartek.3}), (\ref{czwartek.4})  we easily get  (\ref{eq.wewo1}) and (\ref{eq.wewo2}).

{\bf Step 4.} We will show that if  $\Delta^-R=0$, then  (\ref{eq.wewo1}) and (\ref{eq.wewo2}) hold with $\gamma=2$ and $|Y^n-Y|_p\rightarrow0$.
To this end, we first note that  by (\ref{czw.2}),
\[
 \sup_{t\leq T}||Y^n_t-Y_t|\leq \sup_{t\leq T}\max(|\bar Y^n_t-Y_t|,|\underline{Y}^n_t-Y_t|) .
 \]
By this, Theorem \ref{r.6}  and Theorem \ref{czw.3},
$|Y^n-Y|_p\rightarrow 0$.
Now set $R^n=K^n-A^n$, $n\in\N$, and observe that by \cite[Corollary 5.5]{KRzS}, hypotheses (H1) and  (H2) and the assumption that   $\Delta^-R=0$,
\begin{align*}
\nonumber
\int^T_0|Z_r-Z^n_r|^2\,dr&\le 2\lambda\int^T_0|Y_r-Y^n_r||Z_r-Z^n_r|\,dr+2\int^T_0(Y_r-Y^n_r)\,d(R-R^n)^{c}_r\\
&\quad+2\sum_{0\le t<T}(Y_t-Y^n_t)\Delta^+(R_t-R^n_t)\nonumber\\
&\quad+\sup_{0\le t\le T}\Big|\int^T_t(Y_r-Y^n_r)(Z_r-Z^n_r)\,dB_r\Big|\\
&\le 2\lambda\int^T_0|Y_r-Y^n_r||Z_r-Z^n_r|\,dr+2\int^T_0(Y_r-Y^n_r)\,dR^{c}_r
\\
&\quad+2\sum_{0\le t<T}(Y_t-Y^n_t)\Delta^+R_t+\sup_{0\le t\le T}\Big|\int^T_t(Y_r-Y^n_r)(Z_r-Z^n_r)\,dB_r\Big|.
\end{align*}
Applying the  the Burkholder--Davis--Gundy inequality yields
\[
E\Big(\int^T_0|Z_r-Z^n_r|^2\,dr\Big)^{{p}/{2}}\le C\Big(E\sup_{0\le t\le T} |Y_t-Y^n_t|^p+(E\sup_{0\le t\le T}|Y_t-Y^n_t|^p)^{{1}/{2}}(E|R|^p_T)^{{1}/{2}}\Big).
\]
It is clear that the above inequality implies
(\ref{eq.wewo1}) and (\ref{eq.wewo2})   with $\gamma=2$, which completes the proof.
\end{proof}

\end{document}